\newcommand\reallywidehat[1]{%
	\savestack{\tmpbox}{\stretchto{%
			\scaleto{%
				\scalerel*[\widthof{\ensuremath{#1}}]{\kern.1pt\mathchar"0362\kern.1pt}%
				{\rule{0ex}{\textheight}}
			}{\textheight}%
		}{2.4ex}}%
	\stackon[-6.9pt]{#1}{\tmpbox}%
}
\newtheorem{theorem}{Theorem}
\newtheorem{lemma}{Lemma}
\newtheorem{proposition}{Proposition}
\newtheorem{corollary}{Corollary}
\newtheorem{conjecture}{Conjecture}
\newtheorem*{theorem*}{Theorem}
\newtheorem*{theoremA}{Theorem A}
\newtheorem*{theoremB}{Theorem B}
\theoremstyle{definition}
\newtheorem{definition}{Definition}
\newtheorem{remark}{Remark}
\newtheorem{exmp}{Example}
\newtheorem*{exmp*}{Example}
\newtheorem*{acknowledgements}{Acknowledgements}
\providecommand{\customgenericname}{}
\newcommand{\newcustomtheorem}[2]{%
	\newenvironment{#1}[1]
	{%
		\renewcommand\customgenericname{#2}%
		\renewcommand\theinnercustomgeneric{##1}%
		\innercustomgeneric
	}
	{\endinnercustomgeneric}
}
\numberwithin{equation}{section}
\begin{document}

	\author[J.G.Gerstenberg]{Julian Gero Gerstenberg}
	\address{Institute for Mathematics, Goethe University Frankfurt am Main, Germany}
	\email{gerstenb@math.uni-frankfurt.de}

	\keywords{exchangeability, functional represetation theorems, data structures, natural transformations, arrays, Borel spaces, foundations of statistics}
	\subjclass[2010]{Primary 60G09, 68P05; secondary 62A01}

		\title[Exchangeable Laws in Borel Data Structures]{Exchangeable Laws in Borel Data Structures}
		
		\begin{abstract}
			Motivated by statistical practice, category theory terminology is used to introduce Borel data structures and study exchangeability in an abstract framework. A generalization of de Finetti's theorem is shown and natural transformations are used to present functional representation theorems (FRTs). Proofs of the latter are based on a classical result by D.N.Hoover providing a functional representation for exchangeable arrays indexed by finite tuples of integers, together with an universality result for Borel data structures. A special class of Borel data structures are array-type data structures, which are introduced using the novel concept of an indexing system. Studying natural transformations mapping into arrays gives explicit versions of FRTs, which in examples coincide with well-known Aldous-Hoover-Kallenberg-type FRTs for (jointly) exchangeable arrays. The abstract "index arithmetic" presented unifies and generalizes technical arguments commonly encountered in the literature on exchangeability theory. Finally, the category theory approach is used to outline how an abstract notion of seperate exchangeability can be derived, again motivated from statistical practice.
		\end{abstract}
		\maketitle
		
		\vspace{-0.5cm}
		\section{Introduction}\label{sec:INTRO}
		
		Let $\cS$ be a Borel space\footnote[1]{a measurable space $\cS$ is a Borel space if there exists a measurable subset $B\subseteq[0,1]$ and a bi-measurable bijection $f:\cS\rightarrow B$, see Appendix~\ref{appendix:borel_spaces} for basic properties of such spaces.}, $\bS_{\bN}$ the discrete group of bijections $\pi:\bN\rightarrow\bN$ and
		\begin{equation}\label{eq:groupaction}
			\bS_{\bN}\times\cS\rightarrow\cS, (\pi, x)\mapsto \pi x
		\end{equation}
		a measurable group action. (The law of) A $\cS$-valued random variable $X$ is called \emph{exchangeable} if $\pi X\ed X$ for every $\pi$, with $\ed$ being equality in distribution. In many examples motivated from statistics, $X$ is exchangeable iff $\pi X\ed X$ holds for all $\pi\in\bSi\subseteq\bS_{\bN}$, with $\bSi$ the countable group of bijections $\pi$ with $\pi(i)=i$ for all but finitely many $i$.\\
		
		This work studies exchangeability when $\bS_{\bN}\times\cS\rightarrow\cS$ is derived from a \emph{Borel data structure} (BDS), which is defined to be a functor
		\begin{equation*}
			D:\INJo\rightarrow\BOREL,
		\end{equation*}
		where $\INJ$ is the category of injections between finite sets, $\INJo$ its opposite and $\BOREL$ the category of measurable maps between Borel spaces. The main definitions and results are presented in Section~\ref{sec:results}, which starts with an explicit definition of Borel data structure in Definition~\ref{def:boreldatastructure}. No knowledge of category theory is assumed to read this paper, references for the used terminology are \cite{mac2013categories} and \cite{milewski2019category}, the latter providing a "programmers" view to category theory which fits the philosophy of how it is used in this work very well.\\
		This paper is addressed to readers interested in exchangeability and data structures, the emphasize is on decomposition, functional representation and foundations of statistical applications. Many surveys on exchangeability theory covering such topics exist, see \cite{aldous1985exchangeability}, \cite{austin2008exchangeable}, \cite{aldous2009more}, \cite{aldous2010exchangeability} or \cite{orbanz2014bayesian}.
		
		\begin{acknowledgements}
			Funded by the Deutsche Forschungsgemeinschaft (DFG, German Research Foundation), \emph{Exchangeability theory of ID-based data structures with applications in statistics}, 502386356.
		\end{acknowledgements}
	
		\tableofcontents
		
		\subsection{Overview of the results}
		
		The main achievement of this work may be the provided abstract framework, which allows to talk about many reoccurring phenomena and constructions in exchangeability theory literature in a general setting. The main results are:
		
		\begin{itemize}
			\item Theorem~\ref{thm:definetti}: a generalization of de~Finetti's theorem, which shows that exchangeable laws in BDS coincide precisely with mixtures of exchangeable laws satisfying an independence property, also see Theorem~\ref{thm:independence}, 
			\item Theorem~\ref{thm:equivalence}: Hoover's FRT for exchangeable arrays, Theorem~A below, has an equivalent formulation in the provided framework using the concept of natural transformations. Theorem~A is the most important \emph{ingredient} to our approach,
			\item Theorems~\ref{thm:frt-weak} and \ref{thm:frt-weak-depth}: a \emph{weak} FRT for exchangeable laws in arbitrary BDS using the concept of \emph{almost sure} natural transformations,
			\item Definitions~\ref{def:indexing-system}~and~\ref{def:arrays} providing the concepts of indexing system and array-type data structures,
			\item Theorems~\ref{thm:nat-in-array} and \ref{thm:nat-array} providing an explicit characterization of all (true) natural transformations mapping from any BDS into an array-type data structure via kernel functions. One application of this is in characterizing all \emph{local modification rules} on array-type data structures, Example~\ref{exmp:local_modification_rule}, a concept which has been introduced in \cite{austin2010testability}, 		
			\item Theorem~\ref{thm:frt-strong}: a \emph{strong} FRT for exchangeable laws in array-type data structures via true natural transformations. For a given array-type data structure and using the classification of natural transformations via kernels, it is seen that the derived FRT is often equivalent to some classical version of an Aldous-Hoover-Kallenberg-type FRT for (jointly) exchangeable arrays, see Corollary~\ref{cor:frt-array},
			\item Theorem~\ref{thm:universality}: it exists a Borel data structure that is \emph{universal} with respect to natural embedding,
			\item Theorem~\ref{thm:correspondence-limits}: by considering \emph{combinatorial} Borel data structures, a correspondence principle between exchangeable laws and \emph{limits of combinatorial structures} is shown. This generalizes many well-known of such correspondences, the most famous being between graph limits and exchangeable random graphs, see \cite{diaconis2007graph}, \cite{grubel2015persisting} or \cite{austin2008exchangeable} for a more general exposition. Another is between exchangeable posets and poset limits, see \cite{janson2011poset}, in which further examples are listed in the introduction. A very elementary instance of this correspondence can be formulated for exchangeable $\{0,1\}$-sequences, see \cite{gerstenberg2016boundary}.
			\item Section~\ref{sec:sep} in which a notion of \emph{seperate} exchangeability is presented for a wide range of Borel data structures. A special case is the classical notion of seperate exchangeability in arrays. The abstract construction of seperate exchangeability is motivated from its statistical philosophy and makes heavy use of the category theory approach to exchangeability via functors.
		\end{itemize}  
	
		Experts in the field may jump to read Section~\ref{sec:notations} (notations), followed by Section~\ref{sec:results} (definitions and main results), and come back to read the rest of the introduction later; at this point further motivations and connections to existing literature are presented.
		
		\subsection{Similar use of category theory terminology in related work}
		
		The categorical approach to exchangeability via Borel data structures can be motivated from a statistical perspective, see Section~\ref{sec:statisticalmotivation}, which is, in spirit, very close to the use of category theory in \cite{mccullagh2002statistical} where the more general question "What is a statistical model?" is discussed, see Remark~\ref{rem:whatis}.\\
		There is close connection to the notion of \emph{combinatorial species}, see \cite{bergeron1998combinatorial}, used in analytical combinatorics; (combinatorial) Borel data structures can be interpreted as combinatorial species equipped with a \emph{restriction mechanism} compatible with the relabeling mechanism; this approach was used in \cite{gerstenberg2018austauschbarkeit}. Like the case with combinatorial species, a great benefit of using category theory terminology with Borel data structures is that it becomes easy to \emph{build new examples of Borel data structures by composition}, which provides infinite examples by iterative constructions, see Example~\ref{exmp:comp}. Also, the category theory approach is the basis for introducing an abstract concept of seperate exchangeability in Section~\ref{sec:sep}.\\
		Several definitions in this work are close to the content presented in Section~3.1 of \cite{austin2010testability}, where contravariant functors, natural transformations and also exchangeable laws were introduced in a similar abstract setting, some aspects of that work were presented already in \cite{austin2008exchangeable} in an "explicit" form. More connections are explained throughout the work, also see Remark~\ref{rem:subcantor} discussing the different basic assumptions.
		
		\begin{remark}[Other connections]
			The approach to exchangeability via functors modeling data structures is complemented by the approach using model theory, we refer to Section~3.8 in \cite{austin2008exchangeable} and the references therein. Also, de~Finetti's theorem for exchangeable sequences has been approached from a more pure category theory perspective recently, see \cite{fritz2021journal}, \cite{jacobs2020finetti} or \cite{staton2022quantum}. To explain all these connections goes beyond the scope of this paper.
		\end{remark}
		 
		\subsection{Exchangeability in arrays} FRTs are often presented for different notions of exchangeability in arrays, many of which fit in the framework (\ref{eq:groupaction}) as follows: given is a Borel space $\cX$, a countable set of \emph{indices} $I_{\bN}$ and a group action 
		\begin{equation}\label{eq:indices}
			\bS_{\bN}\times I_{\bN}\rightarrow I_{\bN}, (\pi, \bbi)\mapsto \pi\bbi
		\end{equation} 
		on indices. This gives a (left-)group action on $\cS=\cX^{I_{\bN}}$ by defining for $x = (x_{\bbi})_{\bbi\in I_{\bN}} \in \cS$ the action as $\pi x = (x_{\pi^{-1}\bbi})_{\bbi\in I_{\bN}}$. In this situation, $\cS$-valued exchangeable random variables are \emph{arrays} of $\cX$-valued random variables indexed by $I_{\bN}$, that is $X = (X_{\bbi})_{\bbi\in I_{\bN}}$, such that  
		\begin{equation}\label{eq:array}
			X = (X_{\bbi})_{\bbi\in I_{\bN}}~\ed~(X_{\pi\bbi})_{\bbi\in I_{\bN}} = \pi X~~~\text{for all}~\pi\in\bS_{\bN}.
		\end{equation}
		Many basic examples of exchangeability in arrays are instances of (\ref{eq:array}), some examples together with their FRTs are presented next. Let $U_a, a\in\finN$ be iid $\unif[0,1]$-random variables indexed by finite subsets of $\bN$. The following results are organized in Chapter~7 of \cite{kallenberg2006probabilistic}.
		
		\begin{itemize}
			\item[(E1)] Sequences: $I_{\bN} = \bN$, that is $\bbi=i\in\bN$, and $\pi \bbi = \pi(i)$. The de~Finetti/Hewitt-Savage theorem states that laws of exchangeable sequences $X=(X_{i})_{i\in\bN}$ are precisely the mixtures of laws of iid processes, which directly translates into a FRT: for every exchangeable sequence $X$ there exists a measurable function $f:[0,1]^2\rightarrow \cX$ such that 
			$$X\ed (f(U_{\emptyset},U_{\{i\}}))_{\bbi=i\in\bN},$$
			with $U_{\emptyset}$ being responsible for the mixture over iid laws.
			\item[(E2)] Arrays indexed by size-2 sets: $I_{\bN} = \binom{\bN}{2}$, that is $\bbi = \{i_1,i_2\}\subset\bN, i_1\neq i_2$, and $\pi \bbi = \{\pi(i_1),\pi(i_2)\}$. A FRT (Aldous, Hoover) reads as follows: for every exchangeable $X = (X_{\bbi})_{\bbi\in \binom{\bN}{2}}$ there is a measurable function $f:[0,1]^4\rightarrow \cX$, symmetric in the second and third argument, such that 
			$$X \ed \Big(f\big(U_{\emptyset}, U_{\{i_1\}}, U_{\{i_2\}}, U_{\{i_1,i_2\}}\big)\Big)_{\bbi=\{i_1,i_2\}\in\binom{\bN}{2}}.$$
			An elementary proof of this is presented in \cite{austin2012exchangeable}. Note that, by symmetry of $f$, the value $f(U_{\emptyset},U_{\{i_1\}}, U_{\{i_2\}}, U_{\{i_1,i_2\}})$ does not depend on an enumeration of the set $\bbi=\{i_1,i_2\}$. In case $\cX=\{0,1\}$ exchangeable arrays indexed by $\binom{\bN}{2}$ correspond to \emph{exchangeable random graphs on nodes $\bN$}, the variables $X_{\{i_1,i_2\}}\in\{0,1\}$ indicating edges. 
			\item[(E3)] Arrays indexed by length-2 tuples with different entries: $I_{\bN}=\bN^2_{\neq}$, that is $\bbi = (i_1,i_2)\in\bN^2$ with $i_1\neq i_2$, and $\pi \bbi = (\pi(i_1),\pi(i_2))$. For every exchangeable $X=(X_{\bbi})_{\bbi\in\bN^2_{\neq}}$ there exists a measurable function $f:[0,1]^4\rightarrow\cX$, \emph{that does not have to be symmetric}, such that
			$$X \ed \Big(f\big(U_{\emptyset}, U_{\{i_1\}}, U_{\{i_2\}}, U_{\{i_1,i_2\}}\big)\Big)_{\bbi=(i_1,i_2)\in\bN^2_{\neq}}.$$
			In case $\cX=\{0,1\}$ arrays indexed by $\bN^2_{\neq}$ are exchangeable \emph{directed} graphs on nodes $\bN$ (without self-loops), $X_{(i_1,i_2)}\in\{0,1\}$ indicates the presence of a directed edge $i_1\rightarrow i_2$. 
		\end{itemize}
	
		The examples (E2) and (E3) have straightforward generalizations to indices being $k$-size subsets $I_{\bN} = \binom{\bN}{k}$ or $k$-length tuples with different entries $I_{\bN} = \bN^k_{\neq}$; of course one could also consider $I_{\bN} = \binom{\bN}{\leq k}, \bN^{\leq k}_{\neq}$ or $\bN^k$. In all these cases FRTs use randomization up to size $k$, that is involve variables $U_{a}, a\in\binom{\bN}{\leq k}$. FRTs for indices of unbounded size such as $I_{\bN}=\binom{\bN}{<\infty}$ (all finite subsets) or $\bN^*_{\neq}$ (all finite length tuples with different entries), use full randomization $U_a, a\in\finN$. The FRT in the latter case is due to Hoover, see Theorem~A in Section~\ref{sec:ingredients}.\\
		
		The Definitions~\ref{def:indexing-system} and~\ref{def:arrays} introduce indexing systems and the derived notion of array-type data structure, the latter being special types of BDS. This provides an abstract framework to capture examples of the previous types and Theorem~\ref{thm:frt-strong} gives a unified formulation of FRTs in such cases, which is later translated into an explicit low-level form in Corollary~\ref{cor:frt-array}.
		
		\begin{remark}[Graphons and Digraphons]
			Representations for exchangeable graphs, (E2) with $\cX=\{0,1\}$, are often presented using \emph{graphons}, which are symmetric measurable functions $W:[0,1]^2\rightarrow [0,1]$. Given a graphon one can define an exchangeable random graph as follows: given $U_{\{i\}}, i\in\bN$ let $X_{\{i_1,i_2\}}, \{i_1,i_2\}\in\binom{\bN}{2}$ be independent with $X_{\{i_1,i_2\}}\sim \Ber(W(U_{i_1}, U_{i_2}))$ (Bernoulli). The FRT in (E2) shows that, loosely speaking, every exchangeable random graph appears in this way if one allows the graphon to be picked at random in a first step experiment: define $W_u(x,y) = \bP[f(u,x,y,U)=1]$ with $U\sim\unif[0,1]$ and set $W = W_{U_{\emptyset}}$ (ignoring measureability details).\\
			Representations for exchangeable directed graphs, (E3) with $\cX=\{0,1\}$, are often presented using \emph{digraphons}; how the FRT in (E3) translates into a digraphon representation is explained in \cite{diaconis2007graph}, Proof of Theorem~9.1.\\
			Applications of such derived representations are, for example, in the context of Bayesian statistics, see \cite{cai2016priors} or \cite{orbanz2014bayesian}. 
		\end{remark}

		\begin{remark}[Other notions of exchangeability in arrays]
			This work mainly studies exchangeability in the sense of $\bSi$-invariance, motivated by the statistical philosophy in Section~\ref{sec:statisticalmotivation}. In the context of arrays, $\cS=\cX^{I_{\bN}}$, the term "exchangeability" is often also used for a probabilistic symmetry induced by a group action $G\times I_{\bN}\rightarrow I_{\bN}$ on indices in which $G$ is not necessarily $\bS_{\bN}$. Examples are separately exchangeable arrays, for instance $I_{\bN} = \bN^2$ and $G = \bS_{\bN}\times\bS_{\bN}$ acting on $\bN^2$ as $(\pi_1,\pi_2)(i_1,i_2) = (\pi_1(i_1), \pi_2(i_2))$. Considering only the diagonal action every separately exchangeable array is seen to be also (jointly) exchangeable in the sense of $\bS_{\bN}$-invariance; the converse fails in general. The statistical philosophy of "basic" notions of seperate exchangeability can be exploited to derive a notion of seperate exchangeability in the abstract setting of BDS discussed in this work, this is outlined in Section~\ref{sec:sep}. Functional representations for classical notions of seperatly exchangeable arrays are presented in Chapter~7 of \cite{kallenberg2006probabilistic}.\\
			Other types of actions on indices, giving generalizations of classical notions of exchangeability, are studied in \cite{austin2014hierarchical} (hierarchical exchangeability), \cite{jung2021generalization} and \cite{alea2022finetti} (DAG-exchangeability). Also see \cite{lloyd2013exchangeable} in which "exchangeability in databases" is discussed.
		\end{remark}
	
		\subsection{Other exchangeable random objects} 
		
		Exchangeability has long been studied in random structures different from, but not unrelated to, arrays. Many of such examples can be discussed within the BDS framework, to mention only a few:\\
		Relation-type examples are given by partitions (interpreted as equivalence relations, Kingman's Paintbox, see \cite{kallenberg2006probabilistic}, Section~7.8), posets \cite{janson2011poset}, strict weak orders \cite{gnedin1997representation} or total orders (folklore, see e.g. \cite{gerstenberg2020general} Section 3.2). Examples of this type fit into the frameworks (\ref{eq:groupaction}) and (\ref{eq:indices}) as follows: given is a action on indices $\bS_{\bN}\times I_{\bN}\rightarrow I_\bN$ and the space $\cS$ of interest (partitions, orders,$\dots$) can be encoded as a subspace $\cS\subseteq \cX^{I_{\bN}}$ such that $\pi\in\bS_{\bN}, x\in\cS\Rightarrow \pi x\in \cS$ and such that the notion of exchangeability on $\cS$ is inherited from the array-setting. The exchangeable random structures can thus be seen as exchangeable arrays for which FRTs are often known -- but mostly lead to unsatisfactory functional representations, as the additional structure given by $\cS$ is ignored. However, this approach can serve as an intermediate step to a satisfactory representation, for an example see \cite{evans2017doob} (exchangeable didendritic systems). The essence of these examples -- structures of interest being "embedded" in more general ones -- is later introduced within the BDS framework by considering \emph{natural embeddings} and \emph{sub-data structures}. In (hyper)graphs sub-data structures correspond to so-called \emph{hereditary (hyper)graph properties}, see the introduction in \cite{austin2010testability}.\\
		Another source of examples for exchangeability in random structures does not (directly) fit the array-framework: structures of \emph{set system-type}. Examples are total partitions (hierarchies) \cite{forman2018representation} or interval hypergraphs \cite{gerstenberg2020exchangeable}. It is not (directly) obvious how these structures could be encoded as an exchangeable array in a useful way. Later the (combinatorial) BDS of set systems is introduced and these examples are seen to be sub-data structures therein.
	
		\subsection{Statistical motivation}\label{sec:statisticalmotivation}
		
		Studying exchangeability in context (\ref{eq:groupaction}) can be motivated by thinking about how data is collected by a statistician: \emph{picking a small group of individuals from a large population and measuring information on that group}, the type of information could very well be about interactions between individuals, that is relational. For storing the measured information as data (on a piece of paper, on a computer,...) it is required to give unique \emph{identifiers (IDs)} to the individuals of the picked group, which are used to represent the individuals within stored data - a common choice of IDs for storing information of a finite group of $n$ individuals is $[n] = \{1,2,\dots,n\}$, at least in mathematical papers. When studying exchangeability theory it is assumed that the finite groups can be of arbitrary finite size - which pays to the idea that the underlying population is 'large'. Based on the idea of \emph{sampling consistency} one passes to model measurements on countable infinite group of individuals, usually identifying individuals using IDs $\bN = \{1,2,\dots\}$. Having this in mind, a group action $\bS_{\bN}\times\cS\rightarrow\cS$ can be interpreted as follows: $x\in\cS$ represent data measured on a countable infinite group of individuals represented via IDs $i\in\bN$ and $\pi x\in\cS$ represents the measurement \emph{on the same group}, but with IDs of individuals redistributed according to $i\mapsto \pi(i)$. Now suppose randomness is involved: first, a population is picked \emph{at random} and second, conditioned on the population being picked, the statistician "randomly" picks a countable infinite group of individuals and gives them IDs $i\in\bN$, also "randomly". Given that group of individuals represented by IDs $i\in\bN$, the statistician measures data on that group, which gives $X\in\cS$. The precise meaning of "randomly" is not specified (for good reasons), but it seems reasonable to model the final measurement a $\cS$-valued exchangeable random variable, that is $X\ed\pi X$ for all $\pi\in\bS_{\bN}$.\\
		Two thoughts about this:
		\begin{itemize}
			\item[(T1)] all a statistician will ever see in practice are measurements on \emph{finite} groups of individuals; Borel data structures model the treatment of finite measurements only and countable infinite measurements, which are of theoretical interest, are \emph{constructed} using sampling consistency,
			\item[(T2)] there is no reason to restrict IDs $i$ being elements $i\in\bN$, that is natural numbers -- IDs only serve to identify individuals within stored data, no information of interest should be encoded in IDs. Using category theory terminology provides a suitable language to handle arbitrary sets (of IDs). 
		\end{itemize}
		In search for a mathematical framework replacing $\bS_{\bN}\times\cS\rightarrow\cS$ by something that fits both the statisticians philosophy and also pays to (T1) and (T2) directly leads to the Definition of a Borel data structure and a notion of exchangeability therein, Section~\ref{sec:results}.
		
		\begin{remark}\label{rem:whatis}
			The philosophy behind IDs and exchangeability are closely related to the ideas presented in \cite{mccullagh2002statistical}, where the way more general question of what constitutes a statistical model is discussed. In that approach, the concept of an ID is replaced by \emph{statistical unit}, which can encode more structure but just to serve as an identifier. 
		\end{remark}

		\subsection{The main ingredients of the proofs}\label{sec:ingredients}
		
		The notion of exchangeability studied in Borel data structures turns out to be equivalent to $\bSi$-invariance. $\bSi$ is a countable amenable group, thus ergodic theory provides important theorems: relevant for this work are ergodic decomposition (Theorem~A1.4 in \cite{kallenberg2006probabilistic}) and \emph{pointwise convergence} (Theorem 1.2 in \cite{lindenstrauss2001pointwise}). Interesting for statistical applications: the convergence in the pointwise convergence theorem is known to be asymptotically normal under mild regularity assumptions, see \cite{austern2018limit}. An application of this is, for example, in the analysis of cross validations protocols, see Section~4.5 of \cite{austern2019limit}. Also, an application to "generalized $U$-statistics" is given later, see Remark~\ref{rem:asymptotic-u-statistics}.\\
		
		The most important ingredient to the proofs of FRTs in this work is a functional representation of exchangeable arrays fitting the framework (\ref{eq:array}) as follows: let $\cX$ be a Borel space and $I_{\bN} = \bN^*_{\neq}$ be the set of all finite-length tuples $\bbi = (i_1,\dots,i_k)$ with $k\geq 0, i_j\in\bN$ and $i_{j}\neq i_{j'}$ for all $j\neq j'$. The group $\bS_{\bN}$ acts on $I_{\bN}=\bN^*_{\neq}$ as $\pi \bbi = (\pi(i_1),\dots,\pi(i_k))$. The following theorem follows the exposition of Theorem~7.21 in \cite{kallenberg2006probabilistic} where the result is attributed to D.N.~Hoover~\cite{hoover1979relations}. 
		
		\begin{theoremA}[FRT for exchangeable arrays indexed by $\bN^*_{\neq}$, Hoover, Kallenberg]
			For every $\cX$-valued exchangeable array $X = (X_{\bbi})_{\bbi\in\bN^*_{\neq}}$ there exists a measurable function 
			$$f:\bigcup_{k\geq 0}[0,1]^{2^{[k]}}\rightarrow\cX,$$
			such that 
			$$X~\ed~\Big(f\big((U_{\pi_{\bbi}(e)})_{e\in 2^{[k]}}\big)\Big)_{\bbi=(i_1,\dots,i_k)\in\bN^*_{\neq}},$$
			where for $\bbi = (i_1,\dots,i_k)\in\bN^*_{\neq}$ it is $\pi_{\bbi}:\{1,\dots,k\}\rightarrow\{i_1,\dots,i_k\}, j\mapsto i_j$. 
		\end{theoremA}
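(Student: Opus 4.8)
The plan is to follow the classical route to Hoover's theorem, reducing everything to iterated applications of de~Finetti's theorem together with the standard coding lemma for Borel spaces --- namely that a regular conditional distribution can be realised as a measurable function of the conditioning variable and an independent $\unif[0,1]$ variable --- organised as an induction on the length $k$ of the index tuple. First I would reduce to the case $\cX=[0,1]$: as $\cX$ is Borel there is a bi-measurable injection of $\cX$ into $[0,1]$, a representation of the transported array pulls back to one for $X$, and so no generality is lost. Throughout I would exploit that $\bS_{\bN}$-invariance entails invariance under the finitary subgroup $\bSi$, which is what de~Finetti-type arguments require.

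The functions $f_k:[0,1]^{2^{[k]}}\to\cX$ are built one dimension at a time, introducing the iid variables $U_a,\ a\in\finN$, in order of increasing $|a|$. At level $k=0$ there is the single entry $X_{()}$, and the coding lemma gives $X_{()}=f_0(U_{\emptyset})$ with $U_{\emptyset}\sim\unif[0,1]$. At level $k=1$ the sequence $(X_{(i)})_{i\in\bN}$ is exchangeable, so de~Finetti (in its Borel-valued, conditional form) produces an iid family $(U_{\{i\}})_i$ independent of $U_{\emptyset}$ together with a single function satisfying $X_{(i)}=f_1(U_{\emptyset},U_{\{i\}})$. The inductive step relativises this: assuming every entry of length $<k$ has been written through the uniforms $U_a$ with $|a|<k$, I would condition on the $\sigma$-field $\mathcal{G}_{k-1}$ generated by these lower-order uniforms and apply the coding lemma once more to attach a single fresh uniform $U_a$ to each new support $a$ of size $k$. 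The relabelling $\pi_{\bbi}$ is precisely the device that turns the combinatorial position of $\bbi=(i_1,\dots,i_k)$ into the correct tuple of arguments $(U_{\pi_{\bbi}(e)})_{e\in 2^{[k]}}$; since the ordering of $\bbi$ is carried entirely by $\pi_{\bbi}$, the resulting $f_k$ need not be symmetric. Assembling the $f_k$ over the disjoint union $\bigcup_{k\ge 0}[0,1]^{2^{[k]}}$ yields the single function $f$.

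The main obstacle is the inductive step, and it is here that the genuine content of Hoover's theorem sits. Two coherence properties must be secured simultaneously. First, conditionally on $\mathcal{G}_{k-1}$ the length-$k$ entries must be \emph{conditionally dissociated} when grouped by support --- entries over distinct size-$k$ supports conditionally independent, and the joint conditional law over a given support measurably determined by the uniforms $U_a$ with $a$ a proper subset of that support --- so that the only new randomness needed at level $k$ is one uniform $U_a$ per support; this is exactly what permits the same $U_a$ to be reused consistently by every higher-dimensional entry whose support contains $a$. Second, $\bSi$-invariance must force the conditional law to agree across all size-$k$ supports up to relabelling, so that a \emph{single} $f_k$ suffices rather than an index-dependent family. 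Verifying these rests on a careful conditional-independence / reverse-martingale analysis exploiting finite-permutation invariance, after which the coding lemma and a measurable-selection argument complete the construction; this step is where I expect essentially all of the difficulty to lie.
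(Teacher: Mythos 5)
The paper does not prove this statement at all: Theorem~A is imported as a known classical result, attributed to D.N.~Hoover and following the exposition of Theorem~7.21 in Kallenberg's \emph{Probabilistic Symmetries and Invariance Principles}; it is the external \emph{ingredient} on which the paper's own representation theorems are built (via the reformulation in Theorem~\ref{thm:equivalence}). So there is no internal proof to compare yours against, and your attempt has to be judged as a free-standing proof of the Aldous--Hoover--Kallenberg theorem.

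Judged that way, your proposal is an outline of the right classical strategy (reduce to $\cX=[0,1]$, induct on tuple length, attach one fresh uniform per support, invoke the noise-outsourcing/coding lemma at each stage), but it is not a proof: the inductive step, which you yourself flag as the place where ``essentially all of the difficulty'' lies, is precisely the content of the theorem and is not supplied. The two coherence properties you name --- that conditionally on the lower-order uniforms the level-$k$ entries are dissociated across supports with only one new uniform needed per support, and that a single equivariant kernel $f_k$ serves all supports simultaneously --- do not follow from de~Finetti plus the coding lemma in any routine way. Already at $k=2$ one must show that the array is conditionally independent over disjoint index sets given a \emph{correctly chosen} realization of the level-$0$ and level-$1$ variables, and that the level-$1$ uniforms produced by applying de~Finetti ``along rows'' can be made to coincide with those needed ``along columns''; securing this requires the coupling and extension arguments of Aldous, Hoover's original model-theoretic argument, the contractability machinery of Kallenberg's Chapter~7, or the elementary but lengthy route in Austin's exposition of the $\binom{\bN}{2}$ case. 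The coding lemma by itself only represents one conditional law at a time and gives no control over how the auxiliary uniforms for different entries relate to one another. Without carrying out that step your text is a plan for a proof rather than a proof. Since the paper deliberately treats Theorem~A as a black box, the economical course is to do the same: cite Hoover or Kallenberg rather than attempt to reprove it.
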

	
	
		\subsection{Notations}\label{sec:notations}
		Let $M$ be a set, $|M|$ its cardinality and $2^M$ its power set. For $k\geq 0$ define subsets of $2^M$ 
		\begin{align*}
			\binom{M}{k} &= \{M'\in 2^M~:~|M'|=k\},\\
			\binom{M}{\leq k} &= \{M'\in 2^M~:~|M'|\leq k\},\\
			\binom{M}{<\infty} &= \{M'\in 2^M~:~|M'|<\infty\}.
		\end{align*}
		Let $M^* = \cup_{k\geq 0}M^k$ be the set of all finite-length tuples $(m_1,\dots,m_k), k\geq 0$ over $M$. Let $M^k_{\neq}$ be the set of all length-$k$ tuples $(m_1,\dots,m_k)\in M^k$ with $m_j\neq m_{j'}$ for $j\neq j'$. Let $M^*_{\neq} = \cup_{k\geq 0}M^k_{\neq} \subset M^*$ be the set of all finite-length tuples over $M$ with different entries.\\
		Let $N, M$ be two non-empty sets and $N^M$ the set of functions $f:M\rightarrow N$. Note that $N^{\emptyset}$ is also defined, even if $N$ is empty: there exists exactly one function $f:\emptyset\rightarrow N$, which is always injective and bijective iff $N=\emptyset$. In particular, $|N^{\emptyset}|=|\emptyset^{\emptyset}| = 1$.\\
		For any function $f:M\rightarrow N$ define functions:
		\begin{itemize}
			\item $\im(f):2^M\rightarrow 2^N$ sends $M'\subseteq M$ to the image $f(M')\subseteq N$,
			\item $\vec{f}:M^*\rightarrow N^*$ sends $(m_1,\dots,m_k)\in M^k$ to $(f(m_1),\dots,f(m_k))\in N^k$,
			\item $\hat f:M\rightarrow f(M), m\mapsto f(m)$, that is $\hat f$ is obtained from $f$ by restricting its range to its image. $\hat f$ is surjective.
		\end{itemize}
		For every $M'\subseteq M$ let
		$$\iota_{M',M}:M'\rightarrow M, m\mapsto m$$
		be the inclusion map and  
		$$\id\nolimits_M:M\rightarrow M, m\mapsto m$$
		for the identity on $M$. It is $\iota_{M',M}$ always injective and it is bijective iff $\iota_{M',M} = \id_M$, that is $M'=M$.\\
		Every function $f:M\rightarrow N$ has the representation
		$$f = \iota_{f(M),N}\circ \hat f,$$
		that is as a composition of a surjective map followed by an inclusion map. $f$ is injective iff $\hat f$ is bijective. $f$ is surjective iff $\iota_{f(M),N}$ is bijective, which implies $f(M)=N$ and $\hat f = f$.\\
	
		For a measurable space $\cX$ it is $\sP(\cX)$ the set of probability measures on $\cX$. The law of $\cX$-valued random variable $X$ is $\cL(X) = \bP[X\in\cdot]\in\sP(\cX)$. For random variables $\ed$ denotes equality in distribution and $\as$ almost sure equality. For a set $M$ it is $\cX^M$ a measurable space equipped with the product $\sigma$-field. For $M=\emptyset$ it is $\cX^{\emptyset}$ the discrete measurable space consisting of one point being the function $x:\emptyset\rightarrow\cX$, similar $\emptyset^{\emptyset}$ has the single element $x:\emptyset\rightarrow\emptyset$.
		
		\section{Main definitions and results}\label{sec:results}
		
		Arbitrary finite sets are denoted by $a, b, c$. They represent \emph{finite sets of IDs used by a statistician to identify individuals from a finite group}. An injection $\tau:b\rightarrow a$ corresponds to \emph{picking a subgroup from a group represented by IDs $a$ using IDs $b$}. In the subgroup obtained via $\tau$ individuals are assigned IDs $b$, individual $i'\in b$ corresponds to  $\tau(i')\in a$ in the larger group. Each injection $\tau:b\rightarrow a$ can be written as 
		$$\tau = \iota_{\tau(b),a}\circ\hat\tau,$$
		with 
		\begin{itemize}
			\item $\iota_{\tau(b),a}:\tau(b)\rightarrow a, i\mapsto i$ the inclusion map of $\tau(b)\subseteq a$,
			\item $\hat\tau:b\rightarrow\tau(b), i\mapsto \tau(i)$ the bijection obtained by restricting the range. 
		\end{itemize}
		Injection $\iota_{\tau(b),a}$ corresponds to restricting group $a$ to subgroup $\tau(b)\subseteq a$ and $\hat \tau:b\rightarrow\tau(b)$ to a redistribution of IDs on subgroup $\tau(b)$ via $\tau(i)\in\tau(b)\mapsto i\in b$.\\
		
		The following is an explicit definition of a \emph{contra}variant functor $\INJ\rightarrow\BOREL$, which is the same as a (covariant) functor $\INJo\rightarrow\BOREL$. 
		
		\begin{definition}[Borel data structure]\label{def:boreldatastructure}
			A Borel data structure (BDS) is a rule $D$ that maps 
			\begin{itemize}
				\item every finite set $a$ to a Borel space $\Da$,
				\item every injection $\tau:b\rightarrow a$ between finite sets to a measurable map $\Dtau:\Da\rightarrow\Db$,
			\end{itemize}
			such that 
			\begin{enumerate}
				\item[(i)] $D[\id_a] = \id_{\Da}$ for every finite set $a$,
				\item[(ii)] $D[\sigma\circ\tau] = D[\tau]\circ D[\sigma]$ for all composable injections $\sigma, \tau$ between finite sets.
			\end{enumerate}
		\end{definition}
	
		In case every $D_a$ is a non-empty finite discrete space $D$ is called \emph{combinatorial data structure}. Combinatorial data structures coincide with functors $D:\INJo\rightarrow\FIN_+$, where $\FIN_+$ is the category of maps between non-empty finite sets.\\
		
		One can interpret $D_a$ as the space of data a statistician can collect on a group of $n=|a|$ individuals using IDs $a$ to represent individuals. For every injection $\tau:b\rightarrow a$ the contravariance of $D$ gives
		$$D[\tau] = D[\hat\tau]\circ D[\iota_{\tau(b),a}],$$
		one can think of 
		\begin{itemize}
			\item $D[\iota_{\tau(b),a}]: D_a\rightarrow D_{\tau(b)}$ as restricting measurements to subgroups,
			\item $D[\hat\tau]:D_{\tau(b)}\rightarrow D_b$ as transforming IDs within stored data as $\tau(i)\in\tau(b)\mapsto i\in b$,
		\end{itemize}
		thus $D[\tau]$ combines both these operations.\\
		
		Now image a statistician picks a finite group of $n$ individuals from a large population and gives them IDs $i\in a$ with $|a|=n$ \emph{both at random}, then measures $D_a$-valued data on that group, modeled as a $D_a$-valued random variable $X_a$. What "at random" means here is not specified, but is seems obvious that for every injection $\tau:b\rightarrow a$ it should hold that
		$$X_b \ed D[\tau](X_a).$$
		Let $\mu_a = \cL(X_a) \in\sP(D_a)$ the law of $X_a$. In terms of laws the previous is equivalent to 
		$$\mu_b = \mu_a\circ D[\tau]^{-1},$$
		which leads to the following definition:
	
		\begin{definition}[Exchangeable law]\label{def:exchangeable_law}
			An exchangeable law on $D$ is a rule $\mu$ that sends every finite set $a$ to a probability measure $\mu_a\in\sP(D_a)$ such that for every injection $\tau:b\rightarrow a$ it holds that $\mu_a \circ D[\tau]^{-1} = \mu_b$. Let $\SYM(D)$ be the class of all exchangeable laws on $D$. 
		\end{definition}
	
		\begin{remark}
			In $D_{\emptyset}$ the statistician records information that is not about any individual, hence that information is about the population itself or more general about the environment the measurement takes place in.
		\end{remark}
	
		\begin{exmp}\label{exmp:exchangeable_processes}
			Let $\cX$ be a Borel space and define $D = \SEQ(\cX)$ (sequential data over $\cX$) by $D_a = \cX^a$ and $D[\tau](x) = x\circ\tau$. Let $X = (X_i)_{i\in\bN}$ be a $\cX$-valued exchangeable sequence. By exchangeability, for every finite set $a$ and any two injections $\tilde\tau, \tau:a\rightarrow\bN$ it holds $X\circ\tau \ed X\circ \tilde\tau$, which allows to define
			$$\mu_a = \cL(X\circ\tau) \in \sP(D_a)$$
			not depending on the choice of $\tau$. It is easily seen that this defines an exchangeable law $\mu = [a\mapsto \mu_a]\in\SYM(\SEQ(\cX))$ and that the construction $\cL(X)\mapsto \mu$ is a one-to-one correspondence between laws of exchangeable $\cX$-valued sequences and $\SYM(\SEQ(\cX))$; the inverse construction involves Kolmogorov consistency arguments.
		\end{exmp}
	
		The discussion in Section~\ref{sec:random_variables} shows that the previous example generalizes to any Borel data structure $D$, that is: $\SYM(D)$ can be naturally identified with the space of invariant probability measures for some measurable group action $\bSi\times\cS\rightarrow\cS$ on a Borel space $\cS$. In particular, $\SYM(D)$ is a set that comes equipped with a natural Borel space (and convexity) structure such that for every finite set $a$ and measurable $M\subseteq D_a$ the evaluation map $\mu\in\SYM(D)\mapsto \mu_a(M)\in[0,1]$ is measurable. 
		
		\begin{remark}[Exchangeable laws via category theory terminology]\label{rem:exch_via_cat}
			See \cite{mac2013categories} for category theory vocabulary used here, in particular Section~4. There are at least two equivalent ways to obtain $\SYM(D)$ using category theory constructions. Both involve the endofunctor $\sP:\BOREL\rightarrow\BOREL$ which sends a Borel space $\cX$ to the Borel space $\sP(X)$ and a measurable map $f:\cX\rightarrow\cY$ to the push-forward $\sP[f]:\sP(\cX)\rightarrow\sP(\cY), \nu\mapsto \nu\circ f^{-1}$. For every BDS $D:\INJo\rightarrow\BOREL$ it is $\sP\circ D:\INJo\rightarrow\BOREL$ defined by $(\sP\circ D)_a = \sP(D_a)$ and $(\sP\circ D)[\tau] = \sP[D[\tau]]$ a new BDS. Having this, $\SYM(D)$ can be identified with either
			\begin{itemize}
				\item the \emph{limit} of the functor $\sP\circ D$: cones over $\sP\circ D$ correspond to measurable parametrizations $\Theta\rightarrow \SYM(D), \theta\mapsto \mu^{\theta}$ (not necessarily injective or surjective) with the parameter space $\Theta$ being Borel. The limit $\SYM(D)$ corresponds to the parametrization of $\SYM(D)$ by itself. An example of a cone over $\sP\circ\SEQ(\bR)$ is $\Theta = \bR\times(0,\infty)\mapsto \mu^{\theta}$ with $\mu^{\theta}_a = \Norm(\theta_1,\theta_2)^{\otimes a}$ (the iid-normal-distribution model).
				\item the set of all natural transformations $\eta:\ptt\rightarrow \sP\circ D$, where $\ptt:\INJo\rightarrow\BOREL$ is the trivial data structure $\ptt_a = \{1\}$ and $\ptt[\tau] = \id_{\{1\}}$, compare to Equation~(29) in \cite{austin2010testability}. 
			\end{itemize}
		\end{remark}
	
		\begin{remark}[Combinatorial species, see \cite{bergeron1998combinatorial}]
			A combinatorial species is a (covariant) functor $C:\BIJ_+\rightarrow\BIJ_+$, where $\BIJ_+$ is the category of bijections between non-empty finite sets. Every combinatorial data structure $D:\INJo\rightarrow\FIN_+$ defines a species of structures $C$ by $C_a = D_a$ and $C[\pi] = D[\pi^{-1}]$. In this sense, combinatorial data structures can be seen as combinatorial species enriched with restrictions compatible with the relabeling mechanism. The restriction mechanism is of crucial importance to develop exchangeability theory.
		\end{remark}
	
		\subsection{Generalization of de Finetti's theorem}
		
		Let $\mu\in\SYM(D)$. If $\mu$ corresponds to the law of data obtained by picking individuals from a \emph{fixed} large population, it seems obvious that the measurements on disjoint subgroups should be independent, that is: if $a, b$ are disjoint, $a\cap b=\emptyset$, and $X_{a+b}\sim \mu_{a+b}$ then $D[\iota_{a,a+b}](X_{a+b})$ and $D[\iota_{b,a+b}](X_{a+b})$ should be independent. The following defines this property on the level of laws. 
		
		\begin{definition}[Independence property]\label{def:independence}
			$\mu\in\SYM(D)$ has the \emph{independence property} if for all finite sets $a, b$ with $a\cap b=\emptyset$ 
			$$\mu_{a+b}\circ (D[\iota_{a,a+b}],D[\iota_{b,a+b}])^{-1} = \mu_a\otimes\mu_b.$$
			Let $\eSYM(D)\subseteq\SYM(D)$ be the subset of exchangeable laws having this property.
		\end{definition}
	
		It is seen later that the laws having the independence property coincide with ergodic invariant laws, thus the notion $\eSYM$. Exchangeable laws are precisely the mixtures of exchangeable laws having the independence property:
	
		\begin{theorem}\label{thm:definetti}
			If $\SYM(D)\neq\emptyset$, then $\eSYM(D)$ a non-empty measurable subset of $\SYM(D)$ and the following map is a bijection: 
			$$\sP(\eSYM(D))~\longrightarrow~\SYM(D),~~~\Xi~\longmapsto~E[\Xi],$$
			where $E[\Xi]$ is the rule $a \mapsto E[\Xi]_a\in\sP(D_a)$ defined by 
			$$E[\Xi]_a(\cdot) = \int_{\eSYM(D)}\mu_a(\cdot)d\Xi(\mu).$$
		\end{theorem}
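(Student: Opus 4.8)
The plan is to deduce the statement from the ergodic decomposition theorem for countable amenable groups. First I would invoke the identification announced after Definition~\ref{def:exchangeable_law} and carried out in Section~\ref{sec:random_variables}: as a Borel and convex space, $\SYM(D)$ is isomorphic to the set $\mathcal{I}$ of $\bSi$-invariant probability measures for a measurable action $\bSi\times\cS\to\cS$ on a Borel space $\cS$, the isomorphism matching $\mu_a(\cdot)$ with a marginal of the corresponding invariant measure. Since $\bSi$ is a countable amenable group and $\cS$ is Borel, Theorem~A1.4 in \cite{kallenberg2006probabilistic} applies: the ergodic invariant measures $\mathcal{E}\subseteq\mathcal{I}$ form a measurable subset, and the barycenter map $\sP(\mathcal{E})\to\mathcal{I}$ is a bijection, encoding existence and uniqueness of the ergodic decomposition. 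Under the identification this barycenter map is precisely $\Xi\mapsto E[\Xi]$, because integrating an invariant measure against $\Xi$ integrates each of its marginals, matching the formula $E[\Xi]_a(\cdot)=\int\mu_a(\cdot)\,d\Xi(\mu)$. Thus the whole theorem reduces to the single claim that, under the identification, $\eSYM(D)$ corresponds to $\mathcal{E}$; this is the content of Theorem~\ref{thm:independence}.

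Before attacking that claim I would dispatch the routine structural points. That $E[\Xi]\in\SYM(D)$ follows by integrating the consistency relation $\mu_a\circ D[\tau]^{-1}=\mu_b$ against $\Xi$ and using Fubini, the integrands being measurable in $\mu$ by the Borel structure on $\SYM(D)$. For measurability of $\eSYM(D)$ I would note that the independence property need only be tested on a countable separating family of pairs $(a,b)$ of disjoint finite sets and of rectangles $M\times M'$, so that $\eSYM(D)$ is a countable intersection of zero sets of the measurable maps $\mu\mapsto \mu_{a+b}\circ(D[\iota_{a,a+b}],D[\iota_{b,a+b}])^{-1}(M\times M')-\mu_a(M)\mu_b(M')$, hence measurable. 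Non-emptiness of $\eSYM(D)$ whenever $\SYM(D)\neq\emptyset$ is then free once the identification with $\mathcal{E}$ is in place, since a non-empty $\mathcal{I}$ has non-empty $\mathcal{E}$ by the decomposition.

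The core of the proof, and the main obstacle, is the equivalence between the independence property and ergodicity. For ergodic $\Rightarrow$ independence I would fix disjoint $a,b$ and bounded measurable $\phi$ on $D_a$ and $\psi$ on $D_b$, and produce pairwise disjoint copies $b_1,b_2,\dots$ of $b$, each disjoint from $a$, realized by injections that fix $a$ and carry $b$ to $b_k$; exchangeability then makes the joint law of the two restrictions onto $(a,b_k)$ independent of $k$. In the $\cS$-picture the $b_k$ arise as translates of $b$ along a F\o{}lner sequence in $\bSi$, so the mean ergodic theorem (or the pointwise theorem, Theorem~1.2 in \cite{lindenstrauss2001pointwise}) forces the averages $\tfrac1n\sum_{k\le n}\psi(\text{restriction to }b_k)$ to converge in $L^2$ to the constant $E[\psi(\text{restriction to }b)]$, the limit being constant precisely because the measure is ergodic. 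Multiplying by $\phi(\text{restriction to }a)$ and taking expectations, the left-hand side is constant in $n$ by exchangeability and equals the expectation of the product over $(a,b)$, so in the limit that product expectation factorizes as $E[\phi]\,E[\psi]$; as $\phi,\psi$ and $a,b$ vary this is exactly the independence property. For independence $\Rightarrow$ ergodic I would show the invariant $\sigma$-field is $\mu$-trivial: an invariant event is approximable in $L^2$ by events depending only on the restriction to a finite $a$, and is independent of its own translates onto disjoint index blocks by the action, whence a Hewitt--Savage-type zero--one argument forces measure $0$ or $1$.

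The delicate points I anticipate are all inside this last paragraph: verifying that disjoint copies of $b$ genuinely correspond to translates along a F\o{}lner sequence so the amenable mean ergodic theorem is applicable, checking that exchangeability renders the finite correlations exactly translation-invariant (so averaged convergence upgrades to an exact identity), and, in the converse direction, the reverse-martingale approximation of invariant events by finite-index events in the abstract BDS setting. Once the equivalence $\eSYM(D)=\mathcal{E}$ is secured, measurability, non-emptiness and the bijectivity of $\Xi\mapsto E[\Xi]$ are immediate transfers of the corresponding properties of the ergodic decomposition.
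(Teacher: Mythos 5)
Your overall architecture is the same as the paper's: identify $\SYM(D)$ with the $\bSi$-invariant laws on $D_{\bN}$ (Propositions~\ref{prop:exchangeablelaws} and \ref{prop:translation}), invoke ergodic decomposition for the countable amenable group $\bSi$, and reduce everything to the identification of ergodicity with the independence property, which is exactly Theorem~\ref{thm:independence}. Where you genuinely diverge is in how that identification is proved. The paper goes ergodic $\Rightarrow$ (existence of a deterministic approximating sequence, via the Lindenstrauss pointwise theorem) $\Rightarrow$ independence (via a computation with two independent uniform random injections), and closes the cycle independence $\Rightarrow$ ergodic by the explicit variance formula (\ref{eq:variance}) showing $\bVa(\avg(g,n)(X_{[n]}))\to 0$. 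You instead argue ergodic $\Rightarrow$ independence directly by averaging $\psi$ over disjoint translated copies $b_1,b_2,\dots$ of $b$, and independence $\Rightarrow$ ergodic by a Hewitt--Savage zero--one argument. Both of your alternatives are classical and sound in outline, but two caveats: first, the averages $\tfrac1n\sum_{k\le n}\psi_k$ are \emph{not} F\o{}lner averages for $\bSi$ (whose F\o{}lner sets give $\tfrac1{n!}\sum_{|\pi|\le n}$), so you cannot literally cite the amenable mean ergodic theorem; what saves you is that $(\psi_k)_k$ is an exchangeable sequence whose Ces\`aro limit is invariant under all finitely supported permutations of the IDs, hence $\cI$-measurable and a.s.\ constant under ergodicity --- this needs to be said, e.g.\ via the law of large numbers for exchangeable sequences or a reverse martingale. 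Second, your converse direction needs the (true but worth stating) fact that the $\sigma$-field on $D_{\bN}$ is generated by the finite restrictions $D[\iota_{a,\bN}]$, so that invariant events are $L^2$-approximable by finite-index events. What each approach buys: yours is shorter and more self-contained for the de~Finetti statement alone; the paper's detour through the deterministic-sequence characterization (condition (iii) of Theorem~\ref{thm:independence}) is reused verbatim to prove the correspondence with limits of combinatorial structures (Theorem~\ref{thm:correspondence-limits}), a dividend your route does not produce. The routine parts of your write-up (measurability of $\eSYM(D)$ via a countable separating family, $E[\Xi]\in\SYM(D)$ by Fubini, non-emptiness from the decomposition) are fine.
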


		\begin{exmp}[Exchangeable sequences are mixed iid]
			Let $D=\SEQ(\cX), \mu\in\SYM(D)$. Let $a\cap b=\emptyset$ and $(X_i)_{i\in a+b} \sim \mu_{a+b}$. In terms of random variables it is
			$$\mu_{a+b}\circ (D[\iota_{a,a+b}],D[\iota_{b,a+b}])^{-1} = \cL\big((X_i)_{i\in a}, (X_i)_{i\in b}\big),$$
			a joint distribution of two \emph{disjoint} sub-collections of RVs. If $\mu$ has the independence property it thus holds that
			$$\cL\big((X_i)_{i\in a}, (X_i)_{i\in b}\big) = \mu_a\otimes\mu_b = \cL\big((X_i)_{i\in a}\big)\otimes\cL\big((X_i)_{i\in b}\big).$$
			Applying this inductively down to singletons and using exchangeability shows every $\mu\in\eSYM(D)$ is of the form $\mu_a = \nu^{\otimes a}$ for some $\nu\in\sP(\cX)$; one can identify $\sP(\cX)$ with $\eSYM(D)$ and Theorem~\ref{thm:definetti} gives: exchangeable laws in $\SEQ(\cX)$ are precisely given by the rules $a\mapsto \int \nu^{\otimes a}(\cdot)d\Xi(\nu)$, bijectivity parameterized through $\Xi\in\sP(\sP(\cX))$.
		\end{exmp}
	
		In case $D=\SEQ(\cX)$ it was easily possible to use the independence property to give a perfect parametrization of $\eSYM(D)$. From a data structure point of view the reason for this is that for every disjoint sets $a\cap b=\emptyset$ the map $\cX^{a+b}\rightarrow \cX^{a}\times\cX^{b}, x\mapsto (x_{|a}, x_{|b})$ is a bijection. This is a very special property of sequential data $D=\SEQ(\cX)$ and fails in general. As a consequence, it is in general far from obvious how exchangeable laws having the independence property look like -- functional representations offer a different approach to understand the structure of exchangeable laws. 
	
		\subsection{A weak FRT for arbitrary Borel data structures}
		
		Borel data structures have been introduced as functors and a good notion for "functions between functors" is that of a natural transformation. Also an \emph{almost sure} version is introduced:
		
		\begin{definition}[(Almost sure) Natural transformations]\label{def:natural_transformation}
			Let $D, E:\INJo\rightarrow\BOREL$ be two Borel data structures and $\eta:D\rightarrow E$ be a rule that sends every $a$ to a measurable map $\eta_a:D_a\rightarrow E_a$.\\
			The rule $\eta$ is called
			\begin{itemize}
				\item \emph{natural transformation} if for all $\tau:b\rightarrow a$ 
				$$\eta_b\circ D[\tau] = E[\tau]\circ\eta_a,$$
				\item \emph{$\mu$-a.s. natural transformation}, with $\mu\in\SYM(D)$, if for all $\tau:b\rightarrow a$ 
				$$\eta_b\circ D[\tau] = E[\tau]\circ\eta_a~~~\text{$\mu_a$-almost surely}.$$
			\end{itemize}
		\end{definition}
	
		Of course, a natural transformation $\eta:D\rightarrow E$ is also a $\mu$-a.s. natural transformation for every $\mu\in\SYM(D)$. 
		
		\begin{exmp}
			Every measurable $f:\cX\rightarrow\cY$ gives a natural transformation $\eta^f:\SEQ(\cX)\rightarrow\SEQ(\cY)$ having components $\eta^f_a(x) = f\circ x$ and this is a one-to-one correspondence between measurable maps and natural transformations. This is generalized by Theorem~\ref{thm:nat-array} later.
		\end{exmp}
		
		A central observation is the following:
		
		\begin{proposition}\label{prop:push_forward}
			For every $\mu\in\SYM(D)$ and $\mu$-a.s. natural transformation $\eta:D\rightarrow E$ it is $\mu\circ\eta^{-1} \in \SYM(E)$, where $\mu\circ\eta^{-1}$ is the rule that sends $a$ to the push-forward of $\mu_a$ under $\eta_a$, that is to the probability measure $\mu_a\circ\eta^{-1}_a\in \sP(E_a)$.
		\end{proposition}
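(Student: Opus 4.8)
The plan is to verify directly that the rule $\nu := \mu\circ\eta^{-1}$, $a\mapsto \mu_a\circ\eta_a^{-1}$, satisfies the defining identity of an exchangeable law on $E$ (Definition~\ref{def:exchangeable_law}). Each $\nu_a = \mu_a\circ\eta_a^{-1}$ is a well-defined element of $\sP(E_a)$ because $\eta_a:D_a\to E_a$ is measurable, so only the compatibility condition remains: for every injection $\tau:b\to a$ one must show $\nu_a\circ E[\tau]^{-1} = \nu_b$, i.e. $(\mu_a\circ\eta_a^{-1})\circ E[\tau]^{-1} = \mu_b\circ\eta_b^{-1}$. I fix such a $\tau$ for the rest of the argument.

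The computation chains three facts. First, push-forward is functorial, $\mu\circ(g\circ h)^{-1} = (\mu\circ h^{-1})\circ g^{-1}$, so the left-hand side is the push-forward of $\mu_a$ under the composite $E[\tau]\circ\eta_a$, namely $\mu_a\circ(E[\tau]\circ\eta_a)^{-1}$. Second, $\mu$-a.s. naturality of $\eta$ (Definition~\ref{def:natural_transformation}) gives $E[\tau]\circ\eta_a = \eta_b\circ D[\tau]$ outside a $\mu_a$-null set; since two measurable maps that agree $\mu_a$-almost surely push $\mu_a$ to the same measure, this yields $\mu_a\circ(E[\tau]\circ\eta_a)^{-1} = \mu_a\circ(\eta_b\circ D[\tau])^{-1}$, and functoriality rewrites the right side as $(\mu_a\circ D[\tau]^{-1})\circ\eta_b^{-1}$. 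Third, exchangeability of $\mu$ gives $\mu_a\circ D[\tau]^{-1} = \mu_b$, so the whole expression collapses to $\mu_b\circ\eta_b^{-1} = \nu_b$, which is exactly what was to be shown.

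The only step requiring care is the claim that $\mu_a$-a.s.-equal measurable maps into a Borel space induce the same push-forward of $\mu_a$. Writing $g = E[\tau]\circ\eta_a$ and $h = \eta_b\circ D[\tau]$, both are measurable maps $D_a\to E_b$. Because $E_b$ is a Borel space, its diagonal is a measurable subset of $E_b\times E_b$, so the disagreement set $N := \{x\in D_a : g(x)\neq h(x)\}$ is measurable (as the preimage of the off-diagonal under the measurable map $(g,h)$), and by hypothesis $\mu_a(N)=0$. For any measurable $M\subseteq E_b$ the symmetric difference $g^{-1}(M)\,\triangle\,h^{-1}(M)$ is contained in $N$, whence $\mu_a(g^{-1}(M)) = \mu_a(h^{-1}(M))$; as $M$ was arbitrary, $\mu_a\circ g^{-1} = \mu_a\circ h^{-1}$. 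This is the substantive point of the proof; everything else is bookkeeping with the functoriality of push-forward and the two defining identities, so I do not expect a genuine obstacle beyond it.
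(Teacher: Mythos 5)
Your proof is correct and follows essentially the same route as the paper: the paper phrases the three steps with random variables ($E[\tau](Y_a) = E[\tau]\circ\eta_a(X_a) \as \eta_b\circ D[\tau](X_a) \ed \eta_b(X_b)$), while you phrase them as identities of push-forward measures, but the content — a.s.\ naturality followed by exchangeability of $\mu$ — is identical. Your extra care about why $\mu_a$-a.s.\ equal maps induce the same push-forward is a fine (if routine) elaboration of what the paper compresses into ``$\as$ implies $\ed$''.
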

		\begin{proof}
			Let $\tau:b\rightarrow a$ be injective, $X_a\sim\mu_a$ and $Y_a = \eta_a(X_a)\sim \mu_a\circ\eta_a^{-1}$. It is $E[\tau](Y_a) = E[\tau]\circ \eta_a(X_a) \as \eta_b\circ D[\tau](X_a) \ed \eta_b(X_b)$.
		\end{proof}
		
		Four (parameterized) examples of Borel data structures are introduced to state the main results. All of these are array-type data structures, the general concept is in Definitions~\ref{def:indexing-system}~and~\ref{def:arrays}. In Section~\ref{sec:examples} many more examples of Borel data structures and ways of composing new ones from given ones are presented.
		
		\begin{definition}[First examples of BDS]\label{def:first-examples}
			Let $\cX$ be a Borel space and $k\geq 0$.
			\begin{itemize}
				\item $D= \SEQ(\cX) = \ARRAY(\cX,\square)$ with $D_a = \cX^a$ and $D[\tau](x) = x\circ\tau$. 
				\item $D = \ARRAY(\cX, 2^{\square})$ with $D_a = \cX^{2^a}$ and $D[\tau](x) = x\circ \im(\tau)$.
				\item $D = \ARRAY(\cX, \binom{\square}{\leq k}), k\geq 0$ with $D_a = \cX^{\binom{a}{\leq k}}$ and $D[\tau](x) = x\circ\im(\tau)$. 
				\item $D = \ARRAY(\cX, \square^*_{\neq})$ with $D_a = \cX^{a^*_{\neq}}$ and $D[\tau](x) = x\circ \vec{\tau}$.
			\end{itemize}
		\end{definition}	
	
		Iid uniform random variables $U_a, a\in\finN$, frequently used in FRTs, are mirrored in this framework by the following:
	
		\begin{definition}[Uniform randomizer]
			The following notations are used:
			$$R = \ARRAY\big([0,1], 2^{\square}\big)~~~\text{and}~~~\Rk = \ARRAY\Big([0,1],\binom{\square}{\leq k}\Big).$$
			The exchangeable laws $\ur\in\SYM(R)$ and $\urk\in\SYM(\Rk)$ are defined by 
			$$\ur_a = \unif[0,1]^{\otimes 2^a}~~~\text{and}~~~\urk_a = \unif[0,1]^{\otimes \binom{a}{\leq k}}.$$
			The letter "$R$" is used for "Randomization".
		\end{definition}
	
		A key result to our approach is:
	
		\begin{theorem}[Theorem~A via natural transformations]\label{thm:equivalence}
			The following are equivalent:
			\begin{itemize}
				\item[(i)] Theorem~A (representation of $\cX$-valued exchangeable arrays indexed by $\bN^*_{\neq}$),
				\item[(ii)] for every $\mu\in \SYM(\ARRAY(\cX,\square^*_{\neq}))$ exist a natural transformation $\eta:R\rightarrow \ARRAY(\cX,\square^*_{\neq})$ such that $\mu = \ur\circ\eta^{-1}$.
			\end{itemize}
		\end{theorem}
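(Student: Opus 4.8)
The plan is to show that (i) and (ii) are two reformulations of one and the same framework-level correspondence, after which the biconditional is immediate. Write $D:=\ARRAY(\cX,\square^*_{\neq})$. The bridge is the identification, exactly analogous to Example~\ref{exmp:exchangeable_processes}, between $\cX$-valued exchangeable arrays $X=(X_{\bbi})_{\bbi\in\bN^*_{\neq}}$ in the sense of (\ref{eq:array}) and exchangeable laws $\mu\in\SYM(D)$: for a finite set $a$ pick any injection $\phi\colon a\to\bN$ and set $\mu_a=\cL\big((X_{\vec\phi(\bbi)})_{\bbi\in a^*_{\neq}}\big)$, which by exchangeability of $X$ does not depend on $\phi$ and defines an exchangeable law; conversely $\mu$ determines the law of $X$ on $\bN^*_{\neq}$ by a Kolmogorov consistency argument. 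This correspondence is a bijection and is available unconditionally, so it suffices to match the two \emph{representation} statements under it.

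The heart of the argument is a classification of natural transformations $\eta\colon R\to D$ by single functions. Given a measurable $f\colon\bigcup_{k\geq0}[0,1]^{2^{[k]}}\to\cX$, I would define $\eta^f$ componentwise (with $k=|\bbi|$) by
$$\eta^f_a(u)_{\bbi}=f\big((u_{\pi_{\bbi}(e)})_{e\in2^{[k]}}\big),\qquad \bbi=(i_1,\dots,i_k)\in a^*_{\neq},\ u=(u_e)_{e\in2^a},$$
where $\pi_{\bbi}(e)=\{i_j:j\in e\}\subseteq a$. First I would verify naturality: for $\tau\colon b\to a$ both $\eta^f_b\circ R[\tau]$ and $D[\tau]\circ\eta^f_a$ send $u$ to the array whose $\bbj$-entry (for $\bbj\in b^*_{\neq}$, $k=|\bbj|$) equals $f\big((u_{\tau(\pi_{\bbj}(e))})_{e\in2^{[k]}}\big)$, using $R[\tau](u)_e=u_{\tau(e)}$ on the one side and $\vec\tau(\bbj)$ together with $\pi_{\vec\tau(\bbj)}=\tau\circ\pi_{\bbj}$ on the other. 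Conversely, given any natural $\eta$, I would set $f(v)=\eta_{[k]}(v)_{(1,\dots,k)}$ for $v\in[0,1]^{2^{[k]}}$; applying naturality to the injection $\pi_{\bbi}\colon[k]\to a$ and evaluating at the canonical tuple $(1,\dots,k)\in[k]^*_{\neq}$ yields $\eta_a(u)_{\bbi}=\eta_{[k]}\big((u_{\pi_{\bbi}(e)})_{e}\big)_{(1,\dots,k)}=\eta^f_a(u)_{\bbi}$, so $\eta=\eta^f$. Thus $f\mapsto\eta^f$ is a bijection onto natural transformations $R\to D$.

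It remains to match the distributional conditions. For a finite set $a$ and $U\sim\ur_a=\unif[0,1]^{\otimes 2^a}$, the array $\eta^f_a(U)$ has $\bbi$-entry $f\big((U_{\pi_{\bbi}(e)})_{e\in2^{[k]}}\big)$, which is precisely the restriction to $a^*_{\neq}$ of the array appearing in Theorem~A. Since laws of arrays indexed by $\bN^*_{\neq}$ are determined by their restrictions to all finite $a$, the identity $\mu_a=\ur_a\circ(\eta^f_a)^{-1}$ for every $a$ is equivalent to $X\ed\big(f((U_{\pi_{\bbi}(e)})_{e\in2^{[k]}})\big)_{\bbi\in\bN^*_{\neq}}$. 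Composing the three correspondences ($X\leftrightarrow\mu$, $f\leftrightarrow\eta^f$, and the matching of distributional identities), the statement ``every exchangeable $X$ admits some $f$'' (Theorem~A) holds if and only if ``every $\mu\in\SYM(D)$ admits some $\eta$ with $\mu=\ur\circ\eta^{-1}$'', which is exactly (ii).

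The purely routine parts are the naturality computation and checking measurability of $f\mapsto\eta^f$ and of its inverse across the disjoint union over $k$ (each $\eta^f_a$ is a finite product of measurable maps, and the reconstructed $f$ is measurable piecewise). The main obstacle I anticipate is the reconstruction step: one must ensure that evaluating $\eta_{[k]}$ at the \emph{single} canonical tuple $(1,\dots,k)$ already captures all of $\eta$, which hinges on naturality transporting that one value along every injection $\pi_{\bbi}$. Getting the indexing bookkeeping right there -- $\im(\tau)$ on the $R$-side versus $\vec\tau$ on the array side, reconciled through $\pi_{\vec\tau(\bbj)}=\tau\circ\pi_{\bbj}$ -- is where care is needed, and it is essentially the content that Theorem~\ref{thm:nat-array} later establishes in full generality.
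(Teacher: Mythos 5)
Your proposal is correct and follows essentially the same route as the paper: the paper likewise passes between exchangeable arrays and $\SYM(\ARRAY(\cX,\square^*_{\neq}))$ via Kolmogorov consistency, and uses the classification of natural transformations $R\rightarrow\ARRAY(\cX,\square^*_{\neq})$ by kernel functions $f_k:[0,1]^{2^{[k]}}\rightarrow\cX$ (its Proposition~\ref{thm:nat-trans-into-end}, which you have correctly inlined and specialized, including the key identities $R[\tau_{\bbi,a}](u)=(u_{\pi_{\bbi}(e)})_{e\in 2^{[k]}}$ and $\tau\circ\tau_{\bbj,b}=\tau_{\vec\tau(\bbj),a}$). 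The only difference is presentational: you compose three explicit bijections at once, whereas the paper argues the two implications separately.
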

	
		The latter allows to translate Theorem~A into the language of natural transformations, which is used to prove the following: 
		
		\begin{theorem}[Weak FRT]\label{thm:frt-weak}
			For every Borel data structure $D$ and exchangeable law $\mu\in\SYM(D)$ exists a $\ur$-a.s. natural transformation $\eta:R\rightarrow D$ such that $\mu = \ur\circ\eta^{-1}$.
		\end{theorem}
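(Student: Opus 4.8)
The plan is to reduce the statement for an arbitrary $D$ to the representation already available for arrays indexed by $\square^*_{\neq}$. Concretely, I would (a) embed $D$ naturally into an array-type data structure over a single Borel space, (b) push $\mu$ forward along this embedding, (c) apply the array FRT from Theorem~\ref{thm:equivalence} to the pushed-forward law, and (d) pull the resulting \emph{true} natural transformation back to $D$ along a measurable retraction. The transition from true to \emph{almost sure} naturality occurs precisely in this final pull-back step, which is where the real work lies.

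First I would build the embedding (this is the concrete content of the universality result). Set $\cX = \coprod_{m\geq 0} D_{[m]}$, the countable disjoint union of the Borel spaces $D_{[m]}$; this is again a Borel space, and it is non-empty since $D_\emptyset = D_{[0]}$ carries $\mu_\emptyset$. Write $A := \ARRAY(\cX,\square^*_{\neq})$, so $A_a = \cX^{a^*_{\neq}}$ and $A[\tau](y) = y\circ\vec\tau$. For $\bbi=(i_1,\dots,i_m)\in a^*_{\neq}$ let $\pi_{\bbi}:[m]\to a,\ j\mapsto i_j$, and define $\varphi_a:D_a\to A_a$ by $\varphi_a(x) = (D[\pi_{\bbi}](x))_{\bbi\in a^*_{\neq}}$, noting the $\bbi$-coordinate lies in $D_{[m]}\subseteq\cX$. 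Contravariance yields $D[\pi_{\bbj}]\circ D[\tau] = D[\tau\circ\pi_{\bbj}] = D[\pi_{\vec\tau(\bbj)}]$ for every $\tau:b\to a$ and $\bbj\in b^*_{\neq}$, which is exactly the identity $\varphi_b\circ D[\tau] = A[\tau]\circ\varphi_a$; hence $\varphi$ is a true natural transformation. Taking $\bbi$ to be a full enumeration of $a$ (a bijection $[|a|]\to a$), for which $D[\pi_{\bbi}]$ is invertible, shows each $\varphi_a$ is injective, and since all spaces are Borel, $\im(\varphi_a)$ is measurable and $\varphi_a^{-1}:\im(\varphi_a)\to D_a$ is measurable.

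Next I would transport $\mu$ and invoke the array case. By Proposition~\ref{prop:push_forward} the push-forward $\nu := \mu\circ\varphi^{-1}$ lies in $\SYM(A)$, and by construction each $\nu_a = \mu_a\circ\varphi_a^{-1}$ is concentrated on $\im(\varphi_a)$. Since Theorem~A holds, Theorem~\ref{thm:equivalence} supplies a true natural transformation $\zeta:R\to A$ with $\nu = \ur\circ\zeta^{-1}$. To pull back to $D$, choose a measurable retraction $\psi_a:A_a\to D_a$ with $\psi_a\circ\varphi_a=\id_{D_a}$ (e.g.\ $\psi_a=\varphi_a^{-1}$ on $\im(\varphi_a)$ and constant off it, using non-emptiness of $D_a$), and set $\eta_a := \psi_a\circ\zeta_a$. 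Then $\ur_a\circ\eta_a^{-1} = \nu_a\circ\psi_a^{-1} = \mu_a$, because $\nu_a$ sits on $\im(\varphi_a)$ where $\psi_a$ inverts $\varphi_a$. For a.s.\ naturality, fix $\tau:b\to a$ and $r\sim\ur_a$; with $\ur_a$-probability one $\zeta_a(r)\in\im(\varphi_a)$, say $\zeta_a(r)=\varphi_a(x)$, and then naturality of $\zeta$ and of $\varphi$ together with $\psi_b\circ\varphi_b=\id$ give $\eta_b(R[\tau](r)) = \psi_b(A[\tau](\varphi_a(x))) = \psi_b(\varphi_b(D[\tau](x))) = D[\tau](x) = D[\tau](\eta_a(r))$, which is the required $\eta_b\circ R[\tau] = D[\tau]\circ\eta_a$ holding $\ur_a$-almost surely.

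The main obstacle is exactly this last step: $\psi$ is \emph{not} a genuine natural transformation, since its behaviour off the images $\im(\varphi_a)$ is arbitrary and need not respect $R[\tau]$ and $D[\tau]$. What rescues the argument is that $\zeta$ transports $\ur$ onto the measures $\nu_a$, which are supported on those images, so the defect of $\psi$ is invisible $\ur_a$-almost surely — and this is precisely why one obtains an a.s.\ natural transformation rather than a true one in general. Some care is also needed with the Borel-space facts invoked (a countable disjoint union of Borel spaces is Borel, and an injective measurable map between Borel spaces has measurable image and measurable inverse), but these are standard.
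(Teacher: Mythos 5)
Your proof is correct and follows essentially the same route as the paper: embed $D$ naturally into an array over $\square^*_{\neq}$, push $\mu$ forward, apply the Hoover-type representation there (Theorem~\ref{thm:equivalence}), and pull back along a measurable retraction whose failure of naturality off the image of the embedding is invisible $\ur$-almost surely. The only cosmetic differences are that the paper embeds into $\ARRAY([0,1],\square^*_{\neq})$ by composing your coordinates $D[\pi_{\bbi}]$ with measurable injections $D_{[m]}\rightarrow[0,1]$ (Theorem~\ref{thm:embedding}), and packages your final almost-sure-naturality verification into Proposition~\ref{prop:inverse} and Lemma~\ref{lemma:chaining}.
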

	
		In Corollary~\ref{cor:frt-rvs} the result is presented using random variables. There are examples of BDS $D$ for which no exchangeable laws exists, that is $\SYM(D)=\emptyset$, see Example~\ref{exmp:nolaws} later. A direct consequence of Theorem~\ref{thm:frt-weak} is 
		\begin{equation*}
			\SYM(D)\neq\emptyset~~\Longleftrightarrow~~\text{there exists a $\ur$-a.s. natural transformation $\eta:R\rightarrow D$.}
		\end{equation*}	
		
		As seen in (E1)-(E3), known FRTs may not need randomization of arbitrary high level. This can be involved in the Theorem by defining the \emph{depth} of a BDS:
		
		\begin{definition}[Depth]\label{def:depth}
			A BDS $D$ is $k$-determined, $k\geq 0$, if for every finite set $a$ and $x,y\in D_a$ the following implication holds
			\begin{equation*}
			D[\iota_{a',a}](x) = D[\iota_{a',a}](y)~\text{for all $a'\subseteq a$ with $|a'|\leq k$}~~~\Longrightarrow~~~x=y.
			\end{equation*}
			Let $\depth(D) := \min\{k|~\text{$D$ is $k$-determined}\}$ with $\min\emptyset=\infty$.
		\end{definition}
		
		\begin{theorem}[Weak FRT for finite depth]\label{thm:frt-weak-depth}
			Let $D$ be a Borel data structure with $k = \depth(D) < \infty$. For every exchangeable law $\mu\in\SYM(D)$ there exists a $\urk$-a.s. natural transformation $\eta:\Rk\rightarrow D$ such that $\mu = \urk\circ\eta^{-1}$.
		\end{theorem}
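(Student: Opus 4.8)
The plan is to deduce the statement from the already-established weak FRT (Theorem~\ref{thm:frt-weak}) by factoring the representing transformation through a projection that discards high-level randomization. By Theorem~\ref{thm:frt-weak} there is a $\ur$-a.s. natural transformation $\eta:R\to D$ with $\mu=\ur\circ\eta^{-1}$, so the task reduces to replacing $R$ by $\Rk$. First I would introduce the componentwise coordinate projection $p:R\to\Rk$ whose component $p_a:[0,1]^{2^a}\to[0,1]^{\binom{a}{\leq k}}$ simply forgets the coordinates indexed by subsets of size exceeding $k$. Since for an injection $\tau$ the map $\im(\tau)$ preserves cardinalities, a direct check gives $p_b\circ R[\tau]=\Rk[\tau]\circ p_a$, so $p$ is a (true) natural transformation, and clearly $\ur\circ p^{-1}=\urk$ because discarding coordinates of an iid family leaves an iid family.

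The heart of the argument is to show that $\eta_a$ factors through $p_a$ almost surely. Fix a finite set $a$ and a subset $a'\subseteq a$ with $|a'|\leq k$. Almost-sure naturality of $\eta$ along $\iota_{a',a}$ gives $D[\iota_{a',a}]\circ\eta_a=\eta_{a'}\circ R[\iota_{a',a}]$ holding $\ur_a$-a.s., and since $\im(\iota_{a',a})$ is the inclusion $2^{a'}\hookrightarrow 2^a$, the right-hand side depends on $r\in R_a$ only through the coordinates $r_S$ with $S\subseteq a'$, all of which satisfy $|S|\leq k$ and hence are retained by $p_a$. Intersecting over the finitely many such $a'$ and invoking the depth condition --- which asserts that a point of $D_a$ is determined by its restrictions $D[\iota_{a',a}](\cdot)$ with $|a'|\leq k$ --- I conclude that $\eta_a(r)$ depends $\ur_a$-a.s. only on $p_a(r)$. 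The Doob--Dynkin factorization lemma, valid since $D_a$ is Borel, then furnishes a measurable map $\tilde\eta_a:\Rk_a\to D_a$ with $\eta_a=\tilde\eta_a\circ p_a$ holding $\ur_a$-a.s.

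It remains to verify that $\tilde\eta=[a\mapsto\tilde\eta_a]$ has the desired properties. That $\mu_a=\urk_a\circ\tilde\eta_a^{-1}$ is immediate from $\urk=\ur\circ p^{-1}$ together with $\eta_a=\tilde\eta_a\circ p_a$ a.s. For naturality, substituting $\eta=\tilde\eta\circ p$ into the a.s. naturality of $\eta$ and using $p_b\circ R[\tau]=\Rk[\tau]\circ p_a$ yields $\tilde\eta_b\circ\Rk[\tau]\circ p_a=D[\tau]\circ\tilde\eta_a\circ p_a$ holding $\ur_a$-a.s.; pushing forward along $p_a$, which sends $\ur_a$ to $\urk_a$, cancels $p_a$ and delivers the $\urk_a$-a.s. naturality of $\tilde\eta$.

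The main obstacle is not any single deep step but the bookkeeping that makes these almost-sure manipulations rigorous. One must ensure the factorization $\tilde\eta_a$ is genuinely measurable, which is exactly what Doob--Dynkin over Borel spaces provides, and one must combine the several almost-sure identities consistently: the equality $\eta_b=\tilde\eta_b\circ p_b$ holds $\ur_b$-a.s. yet is composed with $R[\tau]$, so one transports it to a $\ur_a$-a.s. statement using $\ur\in\SYM(R)$, and the depth condition is applied only after intersecting the finitely many null sets arising from the subsets $a'$. Checking that almost-sure naturality survives the change of randomizer from $\ur$ to $\urk$ is thus the most delicate part of the verification.
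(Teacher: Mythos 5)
Your proposal is correct and follows essentially the same route as the paper: the paper isolates your factorization step as Proposition~\ref{prop:depth-restriction}, whose part~(iii) produces $\tilde\eta$ with $\eta=\tilde\eta\circ r$ holding $\ur$-a.s.\ from the same depth argument, realized there via the explicit embedding $\phi_a(x)=(D[\iota_{a',a}](x))_{a'\in\binom{a}{\leq k}}$ into $\prod_{a'\in\binom{a}{\leq k}}D_{a'}$ together with a measurable left-inverse, rather than via a Doob--Dynkin factorization. The surrounding bookkeeping (naturality of the coordinate projection, $\urk=\ur\circ r^{-1}$, and the transport of the almost-sure identities along $R[\tau]$ and $p_a$) matches the paper's argument.
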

		
		\begin{remark}[Weak FRT for ergodic laws]\label{rem:ergodicmod}
			Another refinement of the weak FRTs can be made for \emph{ergodic} exchangeable laws: define the BDS $R^{\circ}$ by $R^{\circ}_a = [0,1]^{2^a\setminus\{\emptyset\}}$, $R^{\circ}[\tau](x) = x\circ\im(\tau)$ and the exchangeable law $a\mapsto \unif(R^{\circ})_a = \unif[0,1]^{\otimes 2^a\setminus\{\emptyset\}}$. It can be shown that for every Borel data structure $D$ and every ergodic $\mu\in\eSYM(D)$ there exists a $\unif(R^{\circ})$-a.s. transformation $\eta:R^{\circ}\rightarrow D$ with $\mu = \unif(R^{\circ})\circ \eta^{-1}$. The same can be stated for finite depth by introducing $R^{k,\circ}$ and $\unif(R^{k,\circ})$ in an obvious analogue way.
		\end{remark}
	
		\begin{remark}[Global axiom of choice]\label{rmk:globalaxiomofchoice}
			The weak FRT is about the existence of a $\ur$-almost sure natural transformation. Such objects are "rules" that map \emph{any} finite set to a measurable map; from an axiomatic point of view, such rules are functions between proper classes. A suitable axiomatization of mathematics to work with proper classes are, for example, given by the NBG-axioms (Neumann-Bernays-Gödel). Often included in the NBG-axioms is the \emph{global axiom of choice}, which states that there exists a rule that \emph{simultaneously} picks an element from \emph{any} non-empty set. This axiom will be used several times in our proofs, which makes many results NBG-theorems. However, this is not problematic if one wishes to not leave the ZFC-world: all our NBG-theorems involving a quantifier "for all finite sets" (maybe within involved definitions) give an evenly interesting theorem by restricting the quantifier to "for all finite subsets of some fixed infinite set". Our NBG-Theorems obtained by this restriction talk about sets only. Now NBG is a \emph{conservative extension} of ZFC: every NBG-theorem talking about sets only also is a ZFC-theorem, that is could have been proved within ZFC alone, see \cite{felgner1971comparison}. An alternative approach to handle these foundational aspects is to postulate the existence of sufficiently rich Grothendieck universes and call "sets" only  elements of these, see Section I.6 in \cite{mac2013categories}. The global axiom of choice is used also in the index arithmetic being developed in Section~\ref{sec:arrays}, see the discussion in Example~\ref{exmp:skeleton} there.
		\end{remark}
	
		\subsection{A strong FRT for array-type data structures}
	
		The weak FRT is weak in the sense that it only guarantees the existence of an $\unif(R)$-almost sure natural transformation $\eta:R\rightarrow D$ to represent $\mu\in\SYM(D)$ via $\mu = \unif(R)\circ\eta^{-1}$. The question arises in what circumstances this can be strengthened to a "strong" form in which a true natural transformation can be used for a functional representation. The following shows that this can not always be the case, in fact there may exists no true natural transformations $R\rightarrow D$ at all:
		
		\begin{exmp}
			The existence of a true natural transformation $\eta:R\rightarrow D$ implies that for every $a$ there exists $x\in D_a$ with $x = D[\pi](x)$ for every bijection $\pi:a\rightarrow a$; choose $x = \eta_a(u)$ with $u\in[0,1]^{2^a}$ having a constant value $u(a')\equiv w\in[0,1]$. One example of $D$ in which there exists no true natural transformation $\eta:R\rightarrow D$ but exchangeable laws exist is given by the combinatorial data structure of total orders, see Example~\ref{exmp:binrel}.
		\end{exmp}
		
		A class of data structures where the weak FRT can be strengthened to a strong version are array-type data structures. Also, it is possible to give an explicit "low-level" description of the "high-level" concept natural transformations mapping into arrays, which allows to give low-level descriptions of the strong FRT in the usual style of such representation results.\\
		
		Indexing systems are defined as functors $I:\INJ\rightarrow\INJ$ satisfying additional axioms, in an explicit form:
	
		\begin{definition}[Indexing system]\label{def:indexing-system}
			An indexing system $I$ is a rule that maps 
			\begin{itemize}
				\item every finite set $b$ to a finite set $I_b$
				\item every injection $\tau:b\rightarrow a$ to an injection $I[\tau]:I_b\rightarrow I_a$
			\end{itemize} 
			such that the following hold
			\begin{enumerate}
				\item $I[\sigma\circ\tau] = I[\sigma]\circ I[\tau]$ for all composable injections $\sigma, \tau$,
				\item $I_b\cap I_a = I_{b\cap a}$ for all finite sets $b, a$,
				\item $I[\iota_{b',b}] = \iota_{I_{b'},I_b}$ for all finite sets $b'\subseteq b$.
			\end{enumerate}
		\end{definition}
	
		Indexing systems are introduced to define array-type data structures:
	
		\begin{definition}[Array-type data structure]\label{def:arrays}
			Let $\cX$ be a Borel space (data type) and $I$ an indexing system. The Borel data structure $D = \ARRAY(\cX, I)$ is defined by
			$$D_a = \cX^{I_a}~~~\text{and}~~~D[\tau](x) = x\circ I[\tau].$$
		\end{definition}
	
		The previous examples of array-type data structures used the indexing systems
		\begin{itemize}
			\item $I = \square$ with $I_b = b$ and $I[\tau] = \tau$,
			\item $I = 2^{\square}$ with $I_b = 2^b$ and $I[\tau] = \im(\tau)$, 
			\item $I = \binom{\square}{\leq k}$ with $I_b = \binom{b}{\leq k}$ and $I[\tau] = \im(\tau)$,
			\item $I = \square^*_{\neq}$ with $I_b = b^*_{\neq}$ and $I[\tau] = \vec{\tau}$. 
		\end{itemize}
	
		\begin{exmp}
			The indexing system axioms give that any index $\bbi$ from an indexing system $I$, that is $\bbi\in I_a$ for some $a$, has a \emph{unique minimal set of IDs used to build $\bbi$}: there exists a unique finite set $b$ with $\bbi\in I_{b}$ and $\bbi\in I_a\Rightarrow b\subseteq a$. Later we write $b = \dom(\bbi)$ (the domain of $\bbi$). Not every functor $I:\INJ\rightarrow\INJ$ is an indexing system, an example: let $k\geq 2$ and $I_b = b$ in case $|b|\geq k$ and $I_b = \emptyset$ in case $|b|<k$. For an injection $\tau:b\rightarrow a$ let $I[\tau] = \tau$ in case $|b|\geq k$ and $I[\tau]:\emptyset\rightarrow I_a$ the unique function on domain $\emptyset$ in case $|b|<k$. For two sets $a,b$ with $|a|, |b|\geq k$ and $1\leq |a\cap b|<k$ it is $I_a\cap I_b = a\cap b\neq\emptyset = I_{a\cap b}$. In this case no domains can be defined. 
		\end{exmp}
	
		Every array-type data structure $\ARRAY(\cX, I)$ has exchangeable laws: $\nu\in\sP(\cX)$ gives an exchangeable law $a\mapsto \mu_a = \nu^{\otimes I_a}$. In case $I=2^{\square}, \cX=[0,1]$ and $\nu = \unif[0,1]$ it is $\ARRAY(\cX, I) = R$ and the latter rule equals $\ur\in\SYM(R)$ used in the weak FRT, Theorem~\ref{thm:frt-weak}.
	
		\begin{definition}[Products of BDS]\label{def:products}
			For every countable family of Borel data structures $D^{(l)}, l\in L$ it is $D = \prod_{l\in L} D^{(l)}$ defined by 
			$$D_a = \prod_{l\in L}D^{(l)}_a~~~\text{and}~~~D[\tau]\big((x^{(l)})_{l\in L}\big) = \big(D^{(l)}[\tau](x^{(l)})\big)_{l\in L}$$
			a new Borel data structure. 
		\end{definition}
	
		More constructions such as coproducts, composition or sub-data structures are presented later.
	
		\begin{theorem}[Strong FRT for products of array-type data structures]\label{thm:frt-strong}
			For every countable product of array-type data structures $D = \prod_{l\in L}\ARRAY(\cX^{(l)}, I^{(l)})$ and every exchangeable law $\mu\in\SYM(D)$ there exists a (true) natural transformation $\eta:R\rightarrow D$ such that $\mu = \ur\circ \eta^{-1}$.\\
			In case $k=\depth(D)<\infty$ one can replace $(R, \ur)$ by $(\Rk, \urk)$. 
		\end{theorem}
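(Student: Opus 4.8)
The plan is to reduce everything to Hoover's representation for tuple-indexed arrays in its natural-transformation form, Theorem~\ref{thm:equivalence}, and then transport the resulting \emph{true} natural transformation back along the domain structure supplied by the indexing-system axioms (Definition~\ref{def:indexing-system}). The single randomizer $R$ is used to drive all factors $l\in L$ at once, which is what produces the correct \emph{joint} law $\mu$ rather than merely the marginals; this is the reason for bundling the product of Definition~\ref{def:products} into one big array instead of treating each factor separately.

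First I would package $D=\prod_{l\in L}\ARRAY(\cX^{(l)},I^{(l)})$ into a single $\square^*_{\neq}$-indexed array. For each index $\bbi\in I^{(l)}_a$ the axioms of Definition~\ref{def:indexing-system} yield a unique minimal domain $\dom(\bbi)\subseteq a$; for a finite set $d$ and an enumeration $t\in d^*_{\neq}$ with bijection $\pi_t$, I collect into one coordinate the values carried by all indices with domain exactly $d$, across every $l\in L$, presented through $\pi_t$. This coordinate lives in a fixed Borel space $\mathcal{Z}$ (a countable product over $l\in L$ of finite products of the $\cX^{(l)}$, hence Borel). For $\mu\in\SYM(D)$ this rule defines an exchangeable $\mathcal{Z}$-valued array indexed by $\bN^*_{\neq}$, so Theorem~\ref{thm:equivalence}(ii) (equivalently Theorem~A) furnishes a true natural transformation $\zeta:R\to\ARRAY(\mathcal{Z},\square^*_{\neq})$ with $\ur\circ\zeta^{-1}$ equal to its law. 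I would then \emph{decode} $\zeta$ into a candidate $\eta:R\to D$: for $(l,\bbi)$ with $d=\dom(\bbi)$, read $\eta_a(u)(l,\bbi)$ off the coordinate of $\zeta_d(R[\iota_{d,a}](u))(t)$ corresponding to $\bbi$ for an enumeration $t$ of $d$. Compatibility with the restriction maps $D[\tau](x)=x\circ I[\tau]$ and the identity $\ur\circ\eta^{-1}=\mu$ are then routine, the latter because $\zeta$ already reproduces the bundled law and decoding is measure-preserving on coordinates.

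The \textbf{main obstacle} is upgrading this candidate to a genuine (everywhere, not almost sure) natural transformation, which is precisely the strong content of the theorem. Since the finite sets $a$ carry no canonical order (point (T2)), the enumeration $t$ of $d$ is not natural, and for a fixed $u$ the values $\zeta_d(\cdot)(t)$ and $\zeta_d(\cdot)(t')$ for two enumerations of the same $d$ are related only in law, not pointwise; reading off a single coordinate is therefore not well defined for every $u$. I would resolve this with the index arithmetic of the framework: invoking the global axiom of choice (Remark~\ref{rmk:globalaxiomofchoice}), fix one representative in each $\bS_{\bN}$-orbit of indices, define $\eta$ on representatives, and extend by forcing the naturality squares $\eta_b\circ R[\tau]=D[\tau]\circ\eta_a$ to hold. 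Well-definedness on each (finite) stabilizer is arranged by replacing the Hoover kernel with its stabilizer-symmetrized version; because the symmetrization acts by measure-preserving relabelings of the iid uniforms, the law is unchanged, so $\ur\circ\eta^{-1}=\mu$ persists while naturality now holds for all $u$.

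Finally, the finite-depth refinement. From Definition~\ref{def:depth} one checks that $\depth(\ARRAY(\cX,I))=\sup_{\bbi}|\dom(\bbi)|$, since $D[\iota_{a',a}]$ restricts an array to indices with domain inside $a'$; hence $k=\depth(D)<\infty$ forces $|\dom(\bbi)|\le k$ for every index. Then only enumerations of length $\le k$ occur, the Hoover function reads only subsets of size $\le k$, and $(R,\ur)$ may be replaced throughout by $(\Rk,\urk)$, using the truncated level-$k$ form of Theorem~A as in Theorem~\ref{thm:frt-weak-depth}.
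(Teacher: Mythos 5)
Your architecture is, up to the choice of embedding, the paper's own: pass through a $\square^*_{\neq}$-indexed array, invoke Theorem~A in its natural-transformation form, and then repair the failure of everywhere-naturality of the decoded map by working with kernels on representative indices that are exactly invariant under the finite stabilizers. The paper reaches the same point by combining the weak FRT (Theorem~\ref{thm:frt-weak}, itself obtained from the embedding of Theorem~\ref{thm:embedding} plus Corollary~\ref{thm:frt-kallenberg}) with the modification machinery of Theorem~\ref{thm:nat-array}, Lemma~\ref{lemma:mod} and Proposition~\ref{prop:modification}. Your domain-bundling embedding is a legitimate variant of Theorem~\ref{thm:embedding}, and the finite-depth reduction via factoring through $r:R\rightarrow\Rk$ is sound.

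The one step you must make precise is the ``stabilizer-symmetrized version'' of the kernel, because this is exactly where the strength of the theorem lives. For a general Borel target $\cX^{(l)}$ there is no averaging operation, so ``symmetrization'' can only mean a null-set modification: show that the candidate kernel $f_{\bbia}:R_{[k]}\rightarrow\cX^{(l)}$ satisfies $f_{\bbia}\circ R[\pi]=f_{\bbia}$ for all $\pi\in\stab(\bbia)$ on a measurable, $\stab(\bbia)$-invariant set of full $\ur_{[k]}$-measure, and redefine it to be constant off that set (this is Lemma~\ref{lemma:mod}). Two inputs to this are elided in your sketch. First, your diagnosis that the coordinates at two enumerations of the same $d$ ``are related only in law, not pointwise'' is too pessimistic, and if it were literally all one had the theorem would be out of reach: what one actually has, and needs, is that they are related \emph{pointwise almost surely}, because $\ur\circ\zeta^{-1}=\mu\circ\beta^{-1}$ is concentrated on the image of the bundling embedding, whose elements satisfy the compatibility relations exactly; the almost-sure stabilizer invariance of the candidate kernels is a consequence of this and must be stated before any modification can be performed. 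Second, having exactly invariant kernels, the claim that ``extending by forcing the naturality squares to hold'' is well defined and yields a natural transformation independent of the chosen enumerations $\pi_{\bbi}$ is not automatic; it is the computational content of Theorem~\ref{thm:nat-array} (the verification that $(\pi_{\bbi'})^{-1}\circ\tau^*\circ\pi_{\bbi}$ lies in the stabilizer), which you should cite or reprove. With these two points filled in, your argument closes and coincides in substance with the paper's proof.
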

	
		Theorem~\ref{thm:frt-strong} can be reformulated using category theory vocabulary: it shows the existence of a \emph{weak universal arrow} for the $\SYM$-functor defined on array-type data structures, see Remark~\ref{rem:ct-reformulation} for details. The strong FRT becomes particularly important combined with the following, which gives an \emph{explicit} description of natural transformations mapping into countable products of array-type data structures:
		
		\begin{theorem}[Characterization of natural transformations mapping into arrays]\label{thm:nat-in-array}
			For every Borel data structure $E$ there exists an explicit one-to-one correspondence between natural transformations $\eta:E\rightarrow \prod_{l\in L} \ARRAY(\cX^{(l)},I^{(l)})$ and certain countable families of \emph{kernel functions} $\cF$ in which for each $f\in \cF$ there is some $m\in M$, $k\geq 0$ and sub-group $G\subseteq \bS_k$ such that $f:E_{\{1,\dots,k\}}\rightarrow \cX^{(m)}$ is measurable with $f\circ E[\pi] = f$ for all $\pi\in G$.
		\end{theorem}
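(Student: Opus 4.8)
The plan is to reduce the problem to a single array factor, decompose a natural transformation into its scalar coordinate maps, use naturality to \emph{localize} each coordinate onto the domain of its index, then \emph{standardize} the domains to $[k]=\{1,\dots,k\}$ via chosen bijections; the kernel functions emerge as the coordinate maps attached to full-domain indices over $[k]$, and their symmetry group is exactly the stabilizer of the index. First I would use that $\prod_{l\in L}\ARRAY(\cX^{(l)},I^{(l)})$ is a categorical product in the functor category, so a natural transformation $\eta\colon E\rightarrow\prod_{l\in L}\ARRAY(\cX^{(l)},I^{(l)})$ is the same as a family $(\eta^{(l)})_{l\in L}$ of natural transformations $\eta^{(l)}\colon E\rightarrow\ARRAY(\cX^{(l)},I^{(l)})$; countability of $L$ together with finiteness of each $I^{(l)}_{[k]}$ will then guarantee that the resulting family $\cF$ is countable. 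For a single array, writing $\ARRAY(\cX,I)_a=\cX^{I_a}$, a component $\eta_a\colon E_a\rightarrow\cX^{I_a}$ is equivalent to a family of measurable coordinate maps $\eta_a^{\bbi}\colon E_a\rightarrow\cX$, $\bbi\in I_a$, and the naturality identity $\eta_b\circ E[\tau]=\ARRAY(\cX,I)[\tau]\circ\eta_a$ unfolds, using $\ARRAY(\cX,I)[\tau](y)=y\circ I[\tau]$, into the pointwise condition
$$\eta_b^{\bbi}(E[\tau](x))=\eta_a^{I[\tau](\bbi)}(x)\qquad\text{for all }\tau\colon b\rightarrow a,\ x\in E_a,\ \bbi\in I_b.$$

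Next comes the localization step. Applying this identity to an inclusion $\tau=\iota_{c,a}$ and using indexing-system axiom~(3), which gives $I[\iota_{c,a}]=\iota_{I_c,I_a}$ so that $I[\iota_{c,a}]$ fixes every index, yields $\eta_a^{\bbi}(x)=\eta_{c}^{\bbi}(E[\iota_{c,a}](x))$ whenever $\bbi\in I_c$. Taking $c=\dom(\bbi)$, the unique minimal domain supplied by the index arithmetic of Section~\ref{sec:arrays}, shows that $\eta_a^{\bbi}$ factors through $E[\iota_{\dom(\bbi),a}]$ and is therefore determined by the single map $\eta_{\dom(\bbi)}^{\bbi}$; hence all of $\eta$ is determined by the maps $\eta_c^{\bbi}$ with $\dom(\bbi)=c$. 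I would then standardize: for every finite set $c$ choose (invoking the global axiom of choice, Remark~\ref{rmk:globalaxiomofchoice}) a bijection $\pi_c\colon[k]\rightarrow c$ with $k=|c|$; applying the naturality identity to $\pi_c$ transports any full-domain index $\bbi\in I_c$ to a full-domain index $\bbi_0=I[\pi_c]^{-1}(\bbi)\in I_{[k]}$ and shows $\eta_c^{\bbi}(\cdot)=\eta_{[k]}^{\bbi_0}(E[\pi_c](\cdot))$. The kernel functions are then $f:=\eta_{[k]}^{\bbi_0}\colon E_{[k]}\rightarrow\cX$, ranging over full-domain $\bbi_0\in I_{[k]}$ and all $k\geq 0$. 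Specializing the naturality identity to $\pi\in\bS_k$ fixing $\bbi_0$, i.e. $\pi$ in the stabilizer $G=\{\pi\in\bS_k:I[\pi](\bbi_0)=\bbi_0\}$, gives exactly the claimed invariance $f\circ E[\pi]=f$ for all $\pi\in G$.

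For the reverse direction I would, given such a family of kernels, define
$$\eta_a^{\bbi}(x):=f_{\bbi_0}\big(E[\iota_{c,a}\circ\pi_c](x)\big),\qquad c=\dom(\bbi),\ \ \bbi_0=I[\pi_c]^{-1}(\bbi),$$
and assemble the $\eta_a^{\bbi}$ into $\eta_a\colon E_a\rightarrow\cX^{I_a}$. The two constructions are manifestly mutually inverse on the localized data, so the crux is to verify that this rule is a genuine natural transformation. The main obstacle, and where the index arithmetic does the real work, is checking naturality for an \emph{arbitrary} injection $\tau\colon b\rightarrow a$: one factors $\tau=\iota_{\tau(b),a}\circ\hat\tau$ and must track how $\dom$, the chosen bijections $\pi_c$, and the transported representatives $\bbi_0$ interact under both the inclusion and the bijection parts. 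The compatibility is rescued precisely by the $G$-invariance of the kernels, which makes the value independent of the choice of bijection realizing a given index. Establishing the transformation law $\dom(I[\tau](\bbi))=\tau(\dom(\bbi))$ and the behaviour of $\bbi_0$ under composition from the indexing-system axioms~(1)--(3) is the technical heart; once these identities are in place, naturality follows by direct substitution and bijectivity of the correspondence is immediate.
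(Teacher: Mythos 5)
Your overall route is the same as the paper's: reduce to a single factor $\ARRAY(\cX,I)$ via the product structure, unfold naturality into the coordinate identity $\eta^{\bbi}_b\circ E[\tau]=\eta^{I[\tau](\bbi)}_a$, localize each coordinate to $\dom(\bbi)$ via inclusions, transport to $[k]$ by chosen bijections, and read off the stabilizer symmetry. The paper does exactly this through Theorem~\ref{thm:nat-array} and the skeleton $(\Irep,\rep,\pirep)$, with the domain arithmetic supplied by Lemma~\ref{lemma:ind}.

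There is, however, a genuine gap in how you index the kernel family. You let $\cF$ range over \emph{all} full-domain indices $\bbi_0\in I_{[k]}$, $k\geq 0$, and impose on each kernel only invariance under its stabilizer. This family is redundant: if $\bbi_0,\bbi_0'\in I_{[k]}$ are both full-domain and $\bbi_0'=I[\sigma](\bbi_0)$ for some $\sigma\in\bS_k$ (e.g.\ $(1,2)$ and $(2,1)$ for $I=\square^2_{\neq}$), then naturality forces the cross-relation $\eta^{\bbi_0'}_{[k]}=\eta^{\bbi_0}_{[k]}\circ E[\sigma]$, which is not implied by stabilizer invariance. Consequently your reverse construction applied to an arbitrary stabilizer-invariant family need not produce a natural transformation: when you compare $\eta^{\bbi}_b\circ E[\tau]$ with $\eta^{I[\tau](\bbi)}_a$, the two injections $\tau\circ\iota_{\dom(\bbi),b}\circ\pi_{\dom(\bbi)}$ and $\iota_{\tau(\dom(\bbi)),a}\circ\pi_{\tau(\dom(\bbi))}$ differ by a permutation $\sigma\in\bS_k$ that carries $\bbi_0$ to $\bbi_0'$ and need not lie in $\stab(\bbi_0)$, so the verification requires exactly the missing cross-relation. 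The correspondence is therefore not a bijection onto the class of families you describe, and ``manifestly mutually inverse'' does not hold. The fix is the paper's: pick one representative index per $\sim$-equivalence class (the skeleton $\Irep$), attach one kernel to each representative only, and choose the transporting bijection per index subject to $I[\pi_{\bbi}](r(\bbi))=\bbi$; then the discrepancy permutation automatically lands in the stabilizer of the representative (Lemma~\ref{lemma:ind}(4)), and stabilizer invariance is precisely the condition needed for both well-definedness and naturality.
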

	
		Some prior work is needed to explicitly state the correspondence and how the set $\cF$ is constructed, the case $L=\{1\}$ is stated in Theorem~\ref{thm:nat-array}. In this case, the theorem characterizes natural transformations $\eta:E\rightarrow\ARRAY(\cX,I)$. The groups $G$ imposing symmetry restrictions on kernel functions depend on the indexing system $I$ and in this regard the indexing systems $I = 2^{\square}, \square^*_{\neq}$ represent the two extreme cases: the former leads to full subgroups $G = \bS_k$, the latter to trivial subgroups $G = \{\id_{[k]}\}\subsetneq \bS_k$. It is seen in Theorem~\ref{thm:everygroup} that, up to group-isomorphism, for every finite group $G$ there exists an indexing system $I$ such that $G$ can appear as a symmetry restriction on a kernel function. 
		
		\begin{remark}[Skew-products]\label{rem:skew}
			In \cite{austin2015exchangeable} the notions of \emph{skew-product tuples} and \emph{skew-product type functions} were introduced; in our terminology these concept are about natural transformations $\eta:\prod_{j=0}^k\ARRAY(\cX^{(j)},\binom{\square}{j})\rightarrow \prod_{j=0}^k\ARRAY(\cY^{(j)},\binom{\square}{j})$; loosely speaking, skew-product tuples correspond to kernel functions in the sense of Theorem~\ref{thm:nat-in-array} and the associated skew-product type function to the obtained natural transformation.
		\end{remark}
		
		\subsection{Universality of $\square^*_{\neq}$}
		
		The key importance of the indexing system $\square^*_{\neq}$, and hence Theorem~A, is that indices can be identified with injections mapping $[k] = \{1,\dots,k\}$ into finite sets: every index $\bbi = (i_1,\dots,i_k)\in a^*_{\neq}$ gives the injection $\tau_{\bbi,a}:[k]\rightarrow a, j\mapsto i_j$. The whole concept of Borel data structures is based on the Borel space assumption and on handling injective maps; and in fact, $\ARRAY([0,1],\square^*_{\neq})$ plays a crucial role in the theory. 

		\begin{definition}[Embedding and isomorphism]\label{def:embedding}
			Let $D, E$ be Borel data structures and $\eta:D\rightarrow E$ be a natural transformation. $\eta$ is called 
			\begin{itemize}
				\item embedding if all components $\eta_a:D_a\rightarrow E_a$ are injective, 
				\item isomorphism if all components $\eta_a:D_a\rightarrow E_a$ are bijective. 
			\end{itemize}
			$D$ and $E$ are called isomorphic of there exists an isomorphism between them. 
		\end{definition}
		
		It is easy to check that if $\eta:D\rightarrow E$ is an isomorphism then the rule $\eta^{-1}$ having as components the inverse functions $\eta_a^{-1}$ of $\eta_a$ is a natural transformation $\eta^{-1}:E\rightarrow D$ with $\eta\circ\eta^{-1} = \id_E$ and $\eta^{-1}\circ\eta = \id_D$, measureability is given by the Borel space assumption.
	
		\begin{definition}[Sub-data structures]\label{def:sub-data-structure}
			Let $D', D$ be Borel data structures. $D'$ is a sub-data structure of $D$, denoted with $D'\subseteq D$, if for every $a$ and injection $\tau:b\rightarrow a$
			\begin{itemize}
				\item $D'_a\subseteq D_a$ is a measurable subspace,
				\item $D'[\tau](x) = D[\tau](x)$ for all $x\in D'_a$. 
			\end{itemize}
		\end{definition}
	
		\begin{remark}
			In the context of $D=\GRAPH = \ARRAY(\{0,1\},\binom{\square}{2})$ sub-data structures $D'\subseteq D$ correspond to \emph{hereditary} graph properties: if $\cP$ is a hereditary graph property, $D'_a = \{x\in D_a|~\text{$x$ satisfies $\cP$}\}$ gives a sub-data structure $D'\subseteq\GRAPH$, see the introduction in \cite{austin2010testability} and the later Section~\ref{sec:examples}.
		\end{remark}
	
		\begin{proposition}\label{prop:embedding-structure}
			If $\eta:D\rightarrow E$ is an embedding then $E' = \eta D$ defined by $E'_a = \eta_a(D_a)$ and $E'[\tau](x) = E[\tau](x), x\in E'_a$ is a sub-data structure of $E$ isomorphic to $D$, an isomorphism is given by $\hat\eta$ with components $\hat\eta_a:D_a\rightarrow E'_a, x\mapsto \eta_a(x)$. 
		\end{proposition}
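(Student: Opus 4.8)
The plan is to verify the three things the statement bundles together: that $E'$ is a well-defined Borel data structure, that it sits inside $E$ as a sub-data structure in the sense of Definition~\ref{def:sub-data-structure}, and that $\hat\eta$ furnishes an isomorphism $D\to E'$. All of the categorical bookkeeping is formal; the only genuine input is the structure theory of Borel spaces, which I would invoke twice.

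First I would check that for each $\tau:b\to a$ the map $E[\tau]$ carries $E'_a$ into $E'_b$, so that setting $E'[\tau]:=E[\tau]|_{E'_a}$ makes sense. Given $x\in E'_a$, injectivity of $\eta_a$ lets me write $x=\eta_a(y)$ for a (unique) $y\in D_a$; naturality of $\eta$ then yields
$$E[\tau](x)=E[\tau](\eta_a(y))=\eta_b(D[\tau](y))\in\eta_b(D_b)=E'_b,$$
as required. The functor axioms (i) and (ii) for $E'$ are inherited verbatim from those of $E$, since $E'[\tau]$ is merely a corestriction of $E[\tau]$ and restriction commutes with composition and preserves identities. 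This shows $E'$ is a Borel data structure once I know each $E'_a$ is a measurable subspace of $E_a$.

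The measurability of $E'_a=\eta_a(D_a)\subseteq E_a$ is the first place the Borel space assumption enters: $\eta_a:D_a\to E_a$ is an injective measurable map between Borel spaces, and such a map has Borel image (Lusin--Souslin; see Appendix~\ref{appendix:borel_spaces}). Together with the defining equality $E'[\tau](x)=E[\tau](x)$ on $E'_a$, this gives $E'\subseteq E$ as a sub-data structure in the sense of Definition~\ref{def:sub-data-structure}.

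Finally, for the isomorphism I would note that $\hat\eta_a:D_a\to E'_a$ is surjective by the definition $E'_a=\eta_a(D_a)$ and injective since $\eta_a$ is, hence bijective; it is measurable as a corestriction of $\eta_a$, and its inverse is measurable by the same Borel space theorem applied to the injective measurable map $\eta_a$ with range restricted to its image. Naturality of $\hat\eta$ is then immediate from naturality of $\eta$ and the identity $E'[\tau]=E[\tau]|_{E'_a}$:
$$\hat\eta_b\circ D[\tau]=\eta_b\circ D[\tau]=E[\tau]\circ\eta_a=E'[\tau]\circ\hat\eta_a.$$
Thus $\hat\eta$ is an isomorphism $D\to E'$ per Definition~\ref{def:embedding}. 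The only non-formal steps are the two appeals to Borel space structure theory (measurable image is Borel, and measurable inverse of a Borel injection), which I expect to be the crux; everything else is diagram chasing.
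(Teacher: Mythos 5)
Your proof is correct and follows essentially the same route as the paper's: verify via naturality that $E[\tau]$ maps $\eta_a(D_a)$ into $\eta_b(D_b)$, invoke the Borel space fact that a measurable injection has measurable image and is bi-measurable onto that image, and conclude that $\hat\eta$ with measurable inverse components is the desired isomorphism. The only difference is that you spell out the inheritance of the functor axioms and the naturality diagram for $\hat\eta$ a bit more explicitly, which the paper dismisses as straightforward.
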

		\begin{proof}
			$\eta_a:D_a\rightarrow E_a$ is a measurable injection between Borel spaces, thus the image $E'_a=\eta_a(D_a)\subseteq E_a$ is a measurable subspace and hence a Borel space. For every $x\in E'_a$ there is a unique $y\in D_a$ with $x = \eta_a(y)$ and for $\tau:b\rightarrow a$ it is $E'[\tau](x) = E[\tau](x) = E[\tau]\circ \eta_a(y) = \eta_b\circ D[\tau](y) \in E'_b$, which shows that $E'=\eta D$ is a sub-data structure of $E$. The natural inverse of $\hat\eta$ has components $\hat\eta_a^{-1}$, which are measurable by Borel space assumptions, the naturality of $\hat\eta$ and $\hat\eta^{-1}$ is straightforward.  
		\end{proof}
		
		Note that the inverse of $\hat\eta$ is a natural transformation $\hat\eta^{-1}:E'\rightarrow D$ with $E' = \eta D\subseteq E$ and can in general not be extended to natural transformation defined on the whole BDS $E$. This is different from embeddings between Borel spaces: if $f:\cX\rightarrow\cY$ is a measurable injection between Borel spaces, then there exists a measurable left-inverse $g:\cY\rightarrow\cX$ of $f$, that is $g\circ f = \id_{\cX}$. In category theory terminology: in $\BOREL$ every monomorphism is a section, which is not the case in the functor category $[\INJo,\BOREL]$.\\
		
		In Theorem~\ref{thm:embedding} it is shown that every Borel data structure can be naturally embedded in $\ARRAY([0,1],\square^*_{\neq})$, the embedding being more or less explicit, but of little practical interest. However, together with Proposition~\ref{prop:embedding-structure} this yields:
		
		\begin{theorem}[Universality]\label{thm:universality}
			Every Borel data structure is naturally isomorphic to a sub-data structure of $\ARRAY([0,1],\square^*_{\neq})$. 
		\end{theorem}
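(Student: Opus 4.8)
The plan is to reduce the statement to the construction of a single natural embedding and then invoke Proposition~\ref{prop:embedding-structure}. Indeed, suppose one has produced a natural transformation $\eta:D\rightarrow\ARRAY([0,1],\square^*_{\neq})$ all of whose components $\eta_a$ are injective (this is precisely the content of Theorem~\ref{thm:embedding}). Then Proposition~\ref{prop:embedding-structure}, applied with $E=\ARRAY([0,1],\square^*_{\neq})$, immediately gives that $E'=\eta D$ is a sub-data structure of $\ARRAY([0,1],\square^*_{\neq})$ and that $\hat\eta:D\rightarrow E'$ is an isomorphism, which is exactly the claim. So the whole work sits in exhibiting the embedding $\eta$, and the steps below sketch how I would build it.

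First I would use the Borel space assumption: for every $k\geq 0$ the space $D_{[k]}$ is Borel, so I may fix a measurable injection $\phi_k:D_{[k]}\rightarrow[0,1]$ (only countably many such choices are needed). Recalling that for $\bbi=(i_1,\dots,i_k)\in a^*_{\neq}$ the map $\tau_{\bbi,a}:[k]\rightarrow a,\ j\mapsto i_j$ is an injection, I would then define the candidate components $\eta_a:D_a\rightarrow[0,1]^{a^*_{\neq}}$ by recording all ``views through injective tuples'': $\eta_a(x)(\bbi)=\phi_{k}\big(D[\tau_{\bbi,a}](x)\big)$ for $\bbi$ of length $k$. Since $a^*_{\neq}$ is countable and each coordinate is a composition of measurable maps, each $\eta_a$ is measurable into the product Borel space $[0,1]^{a^*_{\neq}}=\ARRAY([0,1],\square^*_{\neq})_a$.

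The two things to verify are naturality and injectivity. For naturality, given $\tau:b\rightarrow a$ and $\bbj=(j_1,\dots,j_k)\in b^*_{\neq}$, functoriality (contravariance) yields $D[\tau_{\bbj,b}]\circ D[\tau]=D[\tau\circ\tau_{\bbj,b}]$, and a direct check of the underlying injections gives $\tau\circ\tau_{\bbj,b}=\tau_{\vec\tau(\bbj),a}$. Hence $\big(\eta_b\circ D[\tau](x)\big)(\bbj)=\phi_k\big(D[\tau_{\vec\tau(\bbj),a}](x)\big)$, which is exactly $\eta_a(x)(\vec\tau(\bbj))=\big(\eta_a(x)\circ\vec\tau\big)(\bbj)=\big(\ARRAY([0,1],\square^*_{\neq})[\tau]\circ\eta_a(x)\big)(\bbj)$; thus $\eta$ is natural. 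For injectivity I would single out, for each $a$ with $|a|=n$, the coordinate $\bbi$ given by any enumeration (bijection) $[n]\rightarrow a$: for a bijection $\pi$ the functor identities force $D[\pi]$ to be a bijection with inverse $D[\pi^{-1}]$, so $\eta_a(x)=\eta_a(y)$ already implies $D[\tau_{\bbi,a}](x)=D[\tau_{\bbi,a}](y)$ (as $\phi_n$ is injective) and therefore $x=y$.

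The main obstacle is not injectivity --- which, as just seen, comes essentially for free from a single enumeration coordinate --- but rather arranging naturality, i.e.\ making the relabelings and restrictions commute simultaneously for all finite sets. A naive encoding of $D_a$ into $[0,1]$ by one chosen enumeration is injective but fails to be a natural transformation; the point of indexing by the full set $a^*_{\neq}$ of injective tuples is precisely that precomposition with $\vec\tau$ on the array side matches the functorial identity $\tau\circ\tau_{\bbj,b}=\tau_{\vec\tau(\bbj),a}$, turning the otherwise unrelated Borel embeddings $\phi_k$ into the components of a genuine (true, not merely almost sure) natural transformation. Once this embedding is in hand, the universality statement follows by the one-line reduction above.
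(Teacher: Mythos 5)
Your proposal is correct and follows essentially the same route as the paper: reduce to Theorem~\ref{thm:embedding} plus Proposition~\ref{prop:embedding-structure}, build the embedding from measurable injections $D_{[k]}\rightarrow[0,1]$ composed with $D[\tau_{\bbi,a}]$, verify naturality via $\tau\circ\tau_{\bbj,b}=\tau_{\vec\tau(\bbj),a}$ (the paper packages this as Proposition~\ref{thm:nat-trans-into-end}), and get injectivity from a single full-enumeration coordinate where $D[\tau_{\bbi,a}]$ is a bijection. No gaps.
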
	
		
		\section{Examples and Constructions}\label{sec:examples}
		
		\begin{exmp}[Array-type data structures]\label{exmp:arrays}
			Examples of array-type data structures $D=\ARRAY(\cX,I)$ are obtained by giving examples of indexing systems $I$, that is specifying the finite set $I_b$ and for every $\bbi\in I_b$ and $\tau:b\rightarrow a$ the value $I[\tau](\bbi)\in I_a$. Note that in case $\cX$ is a finite set $\ARRAY(\cX,I)$ is a combinatorial data structure.\\
			Let $k\geq 0$. 
			\begin{itemize}
				\item $I=\square$ with $I_b = b$ and $I[\tau] =\tau$ is the indexing system in which IDs equal indices. 
				\item Set-type indexing systems are of the form $I_b\subseteq 2^b$ and $I[\tau] = \im(\tau)$. Examples are the indexing systems $I = 2^{\square}, \binom{\square}{k}, \binom{\square}{\leq k}$ having sets of indices $I_b = 2^b, \binom{b}{k}, \binom{b}{\leq k}$. Note that injectivity of $\tau$ gives $I[\tau](I_b)\subseteq I_a$ in all these cases. 
				\item Tuple-type indexing systems are of the form $I_b\subseteq b^* = \cup_{k\geq 0}b^k$ and $I[\tau]=\vec{\tau}$. Examples are the indexing systems $I=\square^*_{\neq}, \square^k_{\neq}, \square^k$ having sets of indices $I_b = b^*_{\neq}, b^k_{\neq}, b^k$, where the sup-script $\neq$ indicates that only tuples with distinct entries are considered.  
				\item Let $I, J$ be two indexing systems. New indexing systems are defined by 
				\begin{itemize}
					\item Products: $I\times J$ with $(I\times J)_a = I_a\times J_a$ and $(I\times J)[\tau](\bbi,\bbj) = (I[\tau]\bbi, I[\tau]\bbj)$,
					\item Coproducts: $I\sqcup J$ are defined analogously,
					\item Composition: $I\circ J$ with $(I\circ J)_a = I_{J_a}$ and $(I\circ J)[\tau] = I[J[\tau]]$. 
				\end{itemize}
				\item Every species of structures $C:\BIJ_+\rightarrow \BIJ_+$ can be turned into an indexing system $I = I(C)$: let $I_b = \sqcup_{b'\subseteq b}C_{b'}$ and for $b'\subseteq b, x\in C_{b'}$, that is $\bbi = (b,x)\in I_b$, let $I[\tau](\bbi) = (\tau(b'), C[\pi](x))$ with $\pi:b'\rightarrow \tau(b'), i\mapsto \tau(i)$. 
			\end{itemize}
		\end{exmp}

		\begin{definition}[Set systems]\label{def:setsystem}
			The combinatorial data structure $D = \SETSYSTEM$ is defined by $D_a = 2^{2^a}$, that is elements $x\in D_a$ are subsets $x\subseteq 2^a$, and for injective map $\tau:b\rightarrow a$ and $x\in D_a$ it is $D[\tau](x) = \{\tau^{-1}(a')|a'\in x\}$. 
		\end{definition}
	
		There is a canonical bijection between the set of set systems $2^{2^a}$ and the set of functions $\{0,1\}^{2^a}$ by mapping $x\subseteq 2^a$ to the indicator function $a'\subseteq a\mapsto 1(a'\in x)$. This is \emph{not} a natural isomorphism between $\SETSYSTEM$ and $\ARRAY(\{0,1\},2^{\square})$:
	
		\begin{proposition}\label{prop:setsystem}
			$\SETSYSTEM$ and $\ARRAY(\{0,1\},2^{\square})$ are not naturally isomorphic. 
		\end{proposition}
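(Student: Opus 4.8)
The plan is to exhibit a numerical invariant of a Borel data structure that is preserved under natural isomorphism but takes different values on $\SETSYSTEM$ and $\ARRAY(\{0,1\},2^{\square})$. The invariant I would use is, for a fixed injection $\tau:b\rightarrow a$, the multiset of fiber cardinalities of the structure map $D[\tau]:D_a\rightarrow D_b$. Indeed, if $\eta$ were a natural isomorphism between Borel data structures $D$ and $E$, then naturality gives $E[\tau] = \eta_b\circ D[\tau]\circ\eta_a^{-1}$ with $\eta_a,\eta_b$ bijective, so each fiber $E[\tau]^{-1}(y)$ is the bijective image under $\eta_a$ of $D[\tau]^{-1}(\eta_b^{-1}(y))$; hence $D[\tau]$ and $E[\tau]$ share the same multiset of fiber cardinalities. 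The first thing to record is a warning about which $\tau$ to pick: for \emph{bijections} $\pi:a\rightarrow a$ the canonical bijection $x\mapsto (a'\mapsto 1(a'\in x))$ intertwines the two actions (under the indicator identification both reduce to precomposition with $\im(\pi)$ on the index set $2^a$), so $\SETSYSTEM$ and $\ARRAY(\{0,1\},2^{\square})$ are already isomorphic as combinatorial species, and no invariant built from the relabeling action alone can separate them. The distinction must therefore be sought in a genuinely non-surjective injection, i.e.\ in the restriction mechanism that distinguishes a Borel data structure from its underlying species.

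Concretely, I would take the simplest non-surjective injection $\tau:\emptyset\rightarrow\{1\}$ and compute both structure maps directly. For $E=\ARRAY(\{0,1\},2^{\square})$ the map $E[\tau]:\{0,1\}^{2^{\{1\}}}\rightarrow\{0,1\}^{2^{\emptyset}}$ is the honest coordinate projection $(x_{\emptyset},x_{\{1\}})\mapsto x_{\emptyset}$, whose two fibers each have cardinality $2$, giving fiber multiset $\{2,2\}$. For $D=\SETSYSTEM$ the map $D[\tau]:2^{2^{\{1\}}}\rightarrow 2^{2^{\emptyset}}$ sends $x\subseteq\{\emptyset,\{1\}\}$ to $\{\tau^{-1}(a')|a'\in x\}$, and since $\tau^{-1}(a')=\emptyset$ for every $a'$, it collapses the empty system $\emptyset$ to $\emptyset$ and all three non-empty systems to $\{\emptyset\}$, giving fiber multiset $\{1,3\}$. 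The asymmetry is conceptual rather than accidental: the set-system restriction is a lossy union-type operation for which the empty system is the unique preimage of the empty system, whereas array restriction merely forgets coordinates and so always has equal-sized fibers.

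Since $\{2,2\}\neq\{1,3\}$, no natural isomorphism can exist, proving the proposition. I do not anticipate a serious obstacle; the only point needing care is the one flagged above, namely resisting the temptation to compare the two structures through bijective relabelings (where they genuinely agree) and instead basing the invariant on a restriction map. As a robustness check one may rerun the computation on the inclusion $\{1\}\hookrightarrow\{1,2\}$, where the array restriction again has all fibers of equal size $4$ (multiset $\{4,4,4,4\}$) while the set-system restriction, governed by $1(b'\in D[\tau](x)) = 1(\exists a'\in x:\tau^{-1}(a')=b')$, produces the multiset $\{1,3,3,9\}$; any proper injection of this kind witnesses the failure equally well.
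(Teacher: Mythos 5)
Your proof is correct and takes essentially the same route as the paper: both arguments observe that a natural isomorphism must preserve the fiber cardinalities of the restriction maps $D[\iota_{b,a}]$, note that the array restriction has all fibers of equal size while the set-system restriction does not (the empty system / the fiber over $\{b\}$ being the culprit), and conclude. Your computation for $\emptyset\hookrightarrow\{1\}$ is exactly the paper's general count ($2^{2^n-2^k}$ versus $2^{2^{n-k}}-1$) instantiated at $k=0$, $n=1$.
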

		\begin{proof}
			Let $D=\ARRAY(\{0,1\}, 2^{\square})$, $E=\SETSYSTEM$ and $b\subseteq a$ with $k=|b|\leq |a|=n$.\\
			Let $x\in D_b = \{0,1\}^{2^b}$ and consider the set
			$$\Big\{~y\in D_a = \{0,1\}^{2^a}~~\Big|~~D[\iota_{b,a}](y) = y\circ \iota_{2^b,2^a} = x~\Big\}.$$
			This set has cardinality $2^{2^n - 2^k}$, not depending on the concrete choice of $x$.\\
			Now let $x\in E_b$, that is $x\subseteq 2^b$, and consider the set 
			$$\Big\{~y\in E_a = 2^{2^a}~~\Big|~~E[\iota_{b,a}](y) = \{b\cap a'|a'\in y\} = x~\Big\}.$$
			If $D$ and $E$ would be naturally isomorphic, this set would have the same cardinality $2^{2^n - 2^k}$ independent on the concrete choice of $x$. But this does not hold: let $x = \{b\}\in E_b$. It is $\{b\cap a'|a'\in y\} = \{b\}$ if and only if for all $a'\in y$ it holds that $a'\supseteq b$. In particular, for this specific $x$ there are precisely $2^{2^{n-k}} - 1$ such $y$. Clearly, $2^{2^{n-k}} - 1 \neq 2^{2^n - 2^k}$ for $n>k$.
		\end{proof}	
	
		\begin{exmp}[Three implementations of graphs]\label{exmp:graphs}
			An undirected loop-free graph can be defined as either (1) a pair of vertices and edges, (2) an edge indicator function or (3) as an adjacency matrix. These "implementations" of graphs can be formalized using the BDS framework and are seen to be naturally isomorphic:		
			\begin{enumerate}
				\item Pairs of vertices and edges: $D=\GRAPH^{(1)}$ is defined by $D_a = \{(a, E)|E\subseteq \binom{a}{2}\}$ and $D[\tau](x) = D[\tau]((a,E)) = (b, \{e\in\binom{b}{2}|\tau(e)\in E\})$,
				\item Edge indicator functions: $D=\GRAPH^{(2)}$ is defined by $D = \ARRAY(\{0,1\},\binom{\square}{2})$,
				\item Adjacency matrices: $D=\GRAPH^{(3)}$ is defined as a sub-data structure $D\subseteq\ARRAY(\{0,1\},\square^2)$ with 
				$$D_a = \{x\in \{0,1\}^{a^2}|x(i,i)=0~\text{and}~x(i,i')=x(i',i)~\text{for all}~i,i'\in a\}.$$
			\end{enumerate}
			Natural isomorphisms between these implementations are
			\begin{itemize}
				\item $\eta:\GRAPH^{(1)}\rightarrow \GRAPH^{(2)}$ with $\eta_a((a,E)) = \big[e\in \binom{a}{2}\mapsto 1(e\in E)\big]$.
				\item $\eta:\GRAPH^{(2)}\rightarrow \GRAPH^{(3)}$ with $\eta_a(x) = \big[(i,i')\in a^2\mapsto 1(i\neq i')x(\{i,i'\})\big]$.
				\item $\eta:\GRAPH^{(3)}\rightarrow \GRAPH^{(1)}$ with $\eta_a(x)\mapsto (a, \{\{i,i'\}\in\binom{a}{2}|x(i,i')=x(i',i)=1\})$.
			\end{itemize}
		\end{exmp}
	
		\begin{definition}[Products, coproducts, composition]
			Let $D, E$ be Borel data structures and let $T:\BOREL\rightarrow \BOREL$ and $I:\INJ\rightarrow\INJ$ be endofunctors.
			\begin{itemize}
				\item $D\times E$ is defined by $(D\times E)_a = D_a\times E_a$ and $(D\times E)[\tau](x,y) = (D[\tau]x, E[\tau]y)$,
				\item $D\sqcup E$ is defined analogously,
				\item $T\circ D$ is defined by $(T\circ D)_a = T_{D_a}$ and $(T\circ D)[\tau] = T[D[\tau]]$. One important example is $T=\sP$, the probability measure endofunctor, see Remark~\ref{rem:exch_via_cat}. In case $D=\prod_{l=0}^k\ARRAY(\cX^{(l)}, \binom{\square}{l})$ exchangeable laws in $\sP\circ D$ have been studied in \cite{austin2015exchangeable}. Note that exchangeable laws in $\sP\circ D$ correspond to the limit of the functor $\sP\circ\sP\circ D$. The results in \cite{austin2015exchangeable} lead to a conjecture later, see Remark~\ref{rem:conj},
				\item $D\circ I$ is defined by $(D\circ I)_a = D_{I_a}$ and $(D\circ I)[\tau] = D[I[\tau]]$. In case $I$ is an indexing system it holds that $\ARRAY(\cX, I) = \SEQ(\cX)\circ I$. 
			\end{itemize}
		\end{definition}
	
		\begin{exmp}[Binary relations and hereditary properties therein]\label{exmp:binrel}
			A binary relation on a set $a$ can be seen a subset $x \subseteq a\times a = a^2$. If $\tau:b\rightarrow a$ is an injection then $\{(i,i')\in b\times b|(\tau(i),\tau(i'))\in x\}\subseteq b\times b$ defines a new binary relation on $b$ and this gives the combinatorial data structure $D = \BINREL$ of binary relations, which is naturally isomorphic to $\ARRAY(\{0,1\},\square^2)$ by mapping $x\subseteq a\times a$ to the indicator $(i,i')\in a\times a\mapsto 1((i,i')\in x)$.\\
			Many standard properties of binary relations are hereditary, that is stable under $D[\tau]$, such as: symmetry, transitivity, reflexivity, connectedness, anti-symmetry,$\dots$ and thus yield sub-data structure of $\BINREL\simeq\ARRAY(\{0,1\},\square^2)$.\\
			One example important for illustrative purposes: a binary relation $x$ on $a$, implemented as an array $x\in\{0,1\}^{a\times a}$, is a strict total order iff for all $i_1,i_2,i_3\in a$
			\begin{equation*}
				x(i_1,i_1)=0,~~~x(i_1,i_2) = 1 - x(i_2,i_1)~~~\text{and}~~~x(i_1,i_2)x(i_2,i_3)[1-x(i_1,i_3)] = 0.
			\end{equation*}
			Being a strict total order is hereditary and gives the data structure $\TOTAL\subset \ARRAY(\{0,1\},\square^2)$ with $\TOTAL_a$ the subset of strict total orders on $a$.
		\end{exmp}
	
		\begin{exmp}[Exchangeable total order]\label{rmk:exchtotord}
			The exchangeability theory of $\TOTAL$ is folklore, see for example \cite{gerstenberg2020general} or \cite{gerstenberg2020exchangeable}: there exists exactly one exchangeable law on $\TOTAL$ given by $a\mapsto \mu_a = \unif(\TOTAL_a)$, which is ergodic by uniqueness. It is $\depth(\TOTAL) = 2$, but a weak representation in the style of Theorem~\ref{thm:frt-weak-depth} only needs level $1$ randomization. For a finite set $a$ and $U_i, i\in a$ iid $\sim\unif[0,1]$ define a random strict total order $<_a$ on $a$ as $i_1<_a i_2:\Leftrightarrow U_{i_1}<U_{i_2}$. Note that this gives a strict total order with probability one and is equivalent to a weak representation $\mu_a = \eta_a^{-1}\circ\unif(\square)_a$ with $\unif(\square)\in\SYM(\SEQ([0,1]))$ being $\unif(\square)_a = \cL((U_i)_{i\in a}) =\unif[0,1]^{\otimes a}$ and $\mu$ the a.s. natural transformation $\eta:\SEQ([0,1])\rightarrow \TOTAL$ defined as $\mu_a((U_i)_{i\in a}) := <_a$ (and arbitrary on a set with $\unif(\square)_a$-probability zero). 
		\end{exmp}
	
		\begin{exmp}[Sub-data structures of $\SETSYSTEM$]
			Sub-data structures of $D = \SETSYSTEM$ correspond to hereditary set system properties, that is properties $\cP$ such that if a set system $x\subseteq 2^a$ fulfills $\cP$ and $\tau:b\rightarrow a$ is an injection, then $D[\tau](x) = \{\tau^{-1}(a')|a'\in x\}\subseteq 2^b$ also satisfies $\cP$, for every such property $D'_a = \{x\subseteq 2^a|~\text{$x$ satisfies $\cP$}\}$ gives a sub-data structure $D'\subseteq \SETSYSTEM$. Examples of such properties are
			\begin{itemize}
				\item $x\subseteq 2^a$ being a \emph{partition}: $\emptyset\in x, a_1,a_2\in x\Rightarrow a_1\cap a_2=\emptyset$ and $a = \cup_{a'\in x}a'$ (including the empty set in this case is a question of implementation and does not affect the essence of what makes a partition).
				\item $x\subseteq 2^a$ being a \emph{total partition}, also called \emph{hierarchy}: $\emptyset, a\in x, \{i\}\in x$ for all $i\in a$, $a_1,a_2\in x\Rightarrow a_1\cap a_2\in \{a_1,a_2,\emptyset\}$. 
				\item $x\subseteq 2^a$ being an \emph{interval hypergraph}: $\emptyset\in x, \{i\}\in x$ for all $i\in a$ and there exists a strict total order $y\in \TOTAL_a$ such that every $a'\in x$ is an interval with respect to $y$, that is: $i_1,i_2\in a'$ and $i_3\in a$ with $y(i_1,i_3)=y(i_3,i_2)=1$ then $i_3\in a'$. 
			\end{itemize}
			Exchangeability in partitions has a representation by Kingmans's paintbox construction, representations for exchangeable total partitions are by \cite{forman2018representation} and \cite{gerstenberg2020exchangeable} and for interval hypergraphs by \cite{gerstenberg2020exchangeable}. The functional representation in \cite{gerstenberg2020exchangeable} can be translated into the style of FRTs: for every exchangeable law $\mu$ over interval hypergraphs there exists a random compact subset $K$ of the triangle $\{(x,y)\in[0,1]^2|x\leq y\}$ such that $\mu_a \sim \{\{i\in a|x<U_{\{i\}}<y\}|(x,y)\in K\}$ for every finite set $a$, where $K, U_{\{i\}}, i\in a$ are independent. Letting $K \ed g(U_{\emptyset})$ and defining $\eta_a((u_e)_{e\subseteq \binom{a}{\leq 1}}) = \{\{i\in a| x<u_{\{i\}}<y\}|(x,y)\in g(u_{\emptyset})\}$ defines a $\unif(R^1)$-almost sure natural transformation mapping into interval hypergraphs such that $\mu = \unif(R^1)\circ \eta^{-1}$.
		\end{exmp}

		\begin{exmp}[Examples with $\SYM(D)=\emptyset$]\label{exmp:nolaws}
			Exchangeable laws always exist in array-type data structures (product measures) and in combinatorial data structures (by a compactness argument). Two examples of a BDS $D$ without exchangeable laws are:	
			\begin{itemize}
				\item $D_a = (0,1)$ for every $a$ (the open unit interval) and $D[\tau]:(0,1)\rightarrow (0,1), x\mapsto x/{2^{|a|-|b|}}$ for injection $\tau:b\rightarrow a$. Suppose $\mu\in \SYM(D)$ exists, write $X_a\sim \mu_a$. Applying exchangeability via $\tau = \iota_{\emptyset,[n]}, n\geq 0$ gives $X_{\emptyset} \ed X_{[n]}/2^n$, which converges to $0$ in probability as $n\rightarrow\infty$, thus $X_{\emptyset}\sim\delta_0$, which is a contradiction to $X_{\emptyset}$ taking values in $(0,1)$. 
				\item let $\cX$ be countable infinite and $D\subset \SEQ(\cX)$ the sub-data structure with $D_a = \cX^a_{\neq}$ the set of all injective functions $x:a\rightarrow \cX$. If there were $\mu\in\SYM(D)$ it would also be $\mu\in\SYM(\SEQ(\cX))$ such that $\mu_a(\cX^a_{\neq})=1$ for all $a$. By de~Finetti $\mu$ has to be a mixture over iid-laws, $\mu_a = \int \nu^{\otimes a} d\Xi(\nu)$, which implies for $|a|\geq 2$ that $\mu_a(\cX^a_{\neq})<1$ because $\cX$ is countable.
			\end{itemize}
		\end{exmp}
	
		\begin{exmp}\label{exmp:comp}
			Let $\GRAPH:\BIJ_+\rightarrow\BIJ_+$ be species of structures defining graphs. The previous discussion allows to consider the Borel data structure 
			$$D = \bigg[\sP\circ \Big\{\Big[\big(\ARRAY(\bR^3, 2^{\square}\circ\square^8_{\neq})\times \TOTAL\big)\sqcup \big(\sP\circ\SETSYSTEM\big)\Big]\circ I(\GRAPH)\Big\}\bigg]\circ \binom{\square}{\leq 10}.$$
		\end{exmp}

		\section{Extension, pointwise convergence and decomposition}\label{sec:random_variables}
		
		For this Section let $D:\INJo\rightarrow\BOREL$ be a fixed BDS.\\
		Let $A$ be countable infinite, e.g. $A=\bN$. Imagine a statistician picks a countable infinite group of individuals from a large population, uses IDs $i\in A$ to represent the individuals and then measures information $x_a\in D_a$ on each finite subgroup $a\in\finA$. The obtained measurements $(x_a)_{a\in\finA}$ should satisfy \emph{sampling consistency}
		\begin{equation*}
			x_{a'} = D[\iota_{a',a}](x_a)~~~\text{for all}~a'\subseteq a\in\finA.
		\end{equation*}
		
		If individuals are picked and IDs $i\in A$ distributed \emph{at random}, the obtained measurement $X = (\Xa)_{a\in\finA}$ should be an exchangeable random object in the following sense:
		
		\begin{definition}[Exchangeable $D$-measurement]\label{def:exchangeability}
			Let $A$ be countable infinite. An exchangeable $D$-measurement using IDs $A$ is a collection of random variables
			$$X = (X_a)_{a\in\finA}$$
			such that for every $a\in\finA$
			\begin{itemize}
				\item $\Xa$ takes values in $\Da$,
				\item $D[\iota_{a',a}](X_{a}) \as X_{a'}$ for every $a'\subseteq a$ (sampling consistency),
				\item $D[\pi](X_a)\ed X_a$ for every bijection $\pi:a\rightarrow a$ (exchangeability).
			\end{itemize}
			If only the first two hold $X$ is called random $D$-measurement using IDs $A$. Let
			$$\SYM(D; A) = \Big\{\cL(X)~|~\text{$X$ is an exchangeable $D$-measurement using IDs $A$}\Big\},$$
			that is
			$$\SYM(D; A)\subseteq \sP\Big(\prod\nolimits_{a\in\finA}D_a\Big).$$
		\end{definition}
	
		\begin{proposition}\label{prop:exchangeablelaws}
			Let $C$ be countable infinite and $\cL(X)\in\SYM(D;C)$. Let $a$ be a finite set. Choose $c\in\finC$ with $|c|=|a|$ and a bijection $\pi:a\rightarrow c$. Then $\cL(D[\pi](X_c))\in\sP(D_a)$ does not depend on the concrete choice of $c$ and $\pi$ which allows to define $\mu_a := \cL(D[\pi](X_c))\in\sP(D_a)$. The rule $\mu = [a\mapsto \mu_a]$ is element of $\SYM(D)$ and the map $\cL(X)\mapsto \mu$ is a one-to-one correspondence between $\SYM(D;C)$ and $\SYM(D)$. In particular, $\SYM(D)$ is a set.
		\end{proposition}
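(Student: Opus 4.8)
The plan is to reduce everything to one relabeling lemma and then harvest the four claims—well-definedness of $\mu_a$, the membership $\mu\in\SYM(D)$, the bijectivity of $\cL(X)\mapsto\mu$, and the set-hood of $\SYM(D)$—from it. Let $X=(X_a)_{a\in\finC}$ be a representative exchangeable $D$-measurement with $\cL(X)\in\SYM(D;C)$. The lemma I would establish first is that for all finite $c,c'\subseteq C$ and every bijection $\sigma:c\to c'$ one has $D[\sigma](X_{c'})\ed X_c$. To prove it I would put $e=c\cup c'\in\finC$; since $|c|=|c'|$ also $|e\setminus c|=|e\setminus c'|$, so $\sigma$ extends to a bijection $\rho:e\to e$ with $\rho\circ\iota_{c,e}=\iota_{c',e}\circ\sigma$. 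Exchangeability gives $D[\rho](X_e)\ed X_e$; applying the measurable map $D[\iota_{c,e}]$ and using contravariance together with sampling consistency, $D[\iota_{c,e}]\bigl(D[\rho](X_e)\bigr)\ed D[\iota_{c,e}](X_e)\as X_c$, while the equal morphism $D[\sigma]\circ D[\iota_{c',e}]$ sends $X_e$ to $D[\sigma](X_{c'})$ almost surely. Equating the two yields $D[\sigma](X_{c'})\ed X_c$.

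Granting the lemma, well-definedness is immediate: for two choices $\pi:a\to c$ and $\pi':a\to c'$ set $\sigma=\pi'\circ\pi^{-1}$, so that $\pi'=\sigma\circ\pi$ and hence $D[\pi']=D[\pi]\circ D[\sigma]$; the lemma then gives $D[\pi'](X_{c'})=D[\pi]\bigl(D[\sigma](X_{c'})\bigr)\ed D[\pi](X_c)$. To see $\mu\in\SYM(D)$, fix an injection $\tau:b\to a$ and realize $\mu_a=\cL(D[\pi](X_c))$. By contravariance $D[\tau]\circ D[\pi]=D[\lambda]$ with $\lambda=\pi\circ\tau:b\to c$; factoring $\lambda=\iota_{d,c}\circ\hat\lambda$ through its image $d=\lambda(b)\subseteq c$ (so $\hat\lambda:b\to d$ is a bijection and $|d|=|b|$), sampling consistency gives $D[\lambda](X_c)=D[\hat\lambda]\bigl(D[\iota_{d,c}](X_c)\bigr)\as D[\hat\lambda](X_d)$, whose law is $\mu_b$ by the very definition of $\mu_b$. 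Thus $\mu_a\circ D[\tau]^{-1}=\mu_b$.

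For bijectivity I would argue both directions at the level of laws. Injectivity: choosing $c=e$ and $\pi=\id_e$ shows $\mu_e=\cL(X_e)$ for every $e\in\finC$; since sampling consistency makes each finite sub-family $(X_{a_1},\dots,X_{a_n})$ an almost sure measurable image of $X_e$ with $e=a_1\cup\dots\cup a_n$, the whole law $\cL(X)$ on $\prod_{a\in\finC}D_a$ is determined by the marginals $\cL(X_e)=\mu_e$, hence by $\mu$. Surjectivity: given $\mu\in\SYM(D)$, define finite-dimensional laws $P_{\{a_1,\dots,a_n\}}$ as the pushforward of $\mu_e$ along $x\mapsto(D[\iota_{a_i,e}](x))_{i}$ with $e=a_1\cup\dots\cup a_n$; the projective identities $\mu_e\circ D[\iota_{e',e}]^{-1}=\mu_{e'}$ (instances of $\mu\in\SYM(D)$) combined with $D[\iota_{a_j,e}]=D[\iota_{a_j,e'}]\circ D[\iota_{e',e}]$ make this family consistent, so the Kolmogorov extension theorem for Borel spaces produces $P$ on $\prod_{a\in\finC}D_a$. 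I would then verify that its coordinate process satisfies the three defining properties of an exchangeable $D$-measurement and that it is mapped back to $\mu$ by the construction.

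The step I expect to be the main obstacle is the relabeling lemma: it is where finite-set exchangeability, which only constrains each single fibre $D_a$, must be converted into comparability of laws across different embeddings, and it requires carefully extending $\sigma$ to $\rho$ and interlocking the strict equalities of morphisms (from contravariance) with the almost sure equalities coming from sampling consistency. The surjectivity direction is routine once consistency is checked, but it is the only place where the Borel space hypothesis is genuinely used, through Kolmogorov extension. Finally, set-hood is a corollary: $\SYM(D;C)$ is a subset of $\sP(\prod_{a\in\finC}D_a)$, which is a set, and the established bijection transports this to $\SYM(D)$—the point being that $\SYM(D)$, a priori a collection of class-indexed rules, is thereby seen to be an honest set.
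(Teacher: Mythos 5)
Your proposal is correct and follows essentially the same route as the paper: your relabeling lemma ($D[\sigma](X_{c'})\ed X_c$ for bijections $\sigma:c\to c'$, proved by extending $\sigma$ to a bijection of $c\cup c'$ and interleaving exchangeability with sampling consistency) is exactly the computation the paper performs for well-definedness, and the remaining steps (factoring $\pi\circ\tau$ through its image for $\mu\in\SYM(D)$, determination of $\cL(X)$ by its marginals for injectivity, and Kolmogorov extension on the Borel spaces $D_a$ for surjectivity) match the paper's argument step for step.
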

	
		The proof is based on Kolmogorov consistency arguments and placed in the Appendix. 
		
		\begin{definition}[Canonical extension to countable infinite sets of IDs]\label{def:extension}
			For $A$ countable infinite let 
			\begin{equation*}
				\pD_A = \bigg\{x=(x_a)_{a\in\finA}\in\prod\nolimits_{a\in\finA}D_a~\Big|~D[\iota_{a',a}](x_a)=x_{a'}~\text{for all}~a'\subseteq a\in\finA\bigg\}.
			\end{equation*}
			For any countable set $B$ (finite or infinite), injection $\tau:B\rightarrow A$ and $x = (x_a)_{a\in\finA}\in \pD_A$ let 
			\begin{equation*}
			D[\tau](x) = \begin{cases}
			D\big[\hat\tau\big](x_{\tau(b)}),&~\text{if $B=b$ is finite}\\
			\Big(D\big[\widehat{\tau\circ\iota_{b,B}}\big](x_{\tau(b)})\Big)_{b\in\finB},&~\text{if $B$ is infinite}.
			\end{cases}
			\end{equation*}
		\end{definition}
		
		It is easily seen that if $\tau:B\rightarrow A$ is an injection between two countable infinite sets then $x\in\pD_A$ implies $D[\tau](x)\in\pD_B$. In particular: $\pD_A\neq\emptyset$ for some countable infinite $A$ implies $\pD_B\neq\emptyset$ for every countable infinite $B$. The proof of the following is placed in the Appendix.
	
		\begin{proposition}\label{prop:translation}
			Assume $\SYM(D)\neq \emptyset$. Then
			\begin{itemize}
				\item[(1)] For every countable infinite $A$ it is $\pD_A$ a non-empty measurable subset of $\prod_{a\in\finA}D_a$ and hence a Borel space. In particular, random infinite $D$-measurements using IDs $A$ can be considered $\pD_A$-valued random variables. For two $D_A$-valued random variables $X, X'$ it holds $X\ed X'$ iff $X_a \ed X'_a$ for all $a\in\finA$,
				\item[(2)] The construction in Definition~\ref{def:extension} extends $D:\INJo\rightarrow\BOREL$ to a functor $D:\CINJo\rightarrow\BOREL$, where $\CINJ$ is the category of injections between countable sets, 
				\item[(3)] Let $X = (\Xa)_{a\in\finA}$ be a random $D$-measurement using IDs $A$, that is a $D_A$-valued random variable. The following are equivalent:
				\begin{itemize}
					\item[(i)] $X_a\ed D[\pi](X_a)$ for every $a\in\finA$ and bijection $\pi:a\rightarrow a$, that is: $X$ is an exchangeable $D$-measurement in the sense of Definition~\ref{def:exchangeability},
					\item[(ii)] $D[\pi](X) \ed X$ for every bijection $\pi:A\rightarrow A$ with $\pi(i)=i$ for all but finitely many $i\in A$,
					\item[(iii)] $D[\pi](X) \ed X$ for every bijection $\pi:A\rightarrow A$,
					\item[(iv)] $D[\tau](X) \ed X$ for every injection $\tau:A\rightarrow A$.
				\end{itemize}
				\item[(4)] For every injection $\tau:B\rightarrow A$ between countable infinite sets the map $\cL(X)\mapsto \cL(D[\tau](X))$ is a bijection $\SYM(D;A)\rightarrow \SYM(D;B)$.
			\end{itemize}
		\end{proposition}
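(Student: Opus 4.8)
The plan is to establish the four parts in order, using throughout that $\finA$ is countable and that, since $\SYM(D)\neq\emptyset$, an $X$ with $\cL(X)\in\SYM(D;A)$ exists by Proposition~\ref{prop:exchangeablelaws}. For part (1), I would first get non-emptiness of $\pD_A$ from such a witness: sampling consistency $D[\iota_{a',a}](X_a)\as X_{a'}$ holds for each of the countably many pairs $a'\subseteq a\in\finA$, so the union of the exceptional null sets is null and $X\in\pD_A$ almost surely; in particular $\pD_A\neq\emptyset$. Measurability follows by writing $\pD_A$ as the countable intersection over $a'\subseteq a$ of the sets $\{x:D[\iota_{a',a}](x_a)=x_{a'}\}$, each being the preimage of the (measurable) diagonal of the Borel space $D_{a'}$ under a measurable map; as a measurable subset of the countable product $\prod_{a\in\finA}D_a$ it is a Borel space by the basic properties of Borel spaces. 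Since $\cL(X)$ concentrates on $\pD_A$, every random $D$-measurement is, up to a null modification, $\pD_A$-valued. The one substantive assertion is that $X\ed X'$ iff $X_a\ed X'_a$ for all $a$: the forward direction is immediate, and the converse uses that sampling consistency makes single marginals determine all finite-dimensional ones, since for $a_1,\dots,a_n\in\finA$ and $a=a_1\cup\dots\cup a_n$ one has $(X_{a_1},\dots,X_{a_n})=\big(D[\iota_{a_1,a}](X_a),\dots,D[\iota_{a_n,a}](X_a)\big)$, a fixed measurable image of $X_a$, so equality of the laws of $X_a$ and $X'_a$ forces equality of all finite-dimensional laws, hence $X\ed X'$.

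For part (2), I would check that Definition~\ref{def:extension} respects $\pD$ and is functorial. That $D[\tau](x)\in\pD_B$ for $\tau:B\to A$ is verified componentwise: for $b'\subseteq b\in\finB$ one expands $D[\iota_{b',b}]\big(D[\widehat{\tau\circ\iota_{b,B}}](x_{\tau(b)})\big)$ by contravariant functoriality of $D$ on finite sets, rewrites the resulting injection $b'\to\tau(b)$ as $\iota_{\tau(b'),\tau(b)}\circ\widehat{\tau\circ\iota_{b',B}}$, and then uses the consistency relation $D[\iota_{\tau(b'),\tau(b)}](x_{\tau(b)})=x_{\tau(b')}$ of $x\in\pD_A$ to land on the $b'$-component. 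Measurability is clear, each component being a projection followed by a measurable $D$-map. The identity law is immediate since $\widehat{\iota_{b,A}}=\id_b$, and the composition law $D[\tau\circ\rho]=D[\rho]\circ D[\tau]$ is proved by the same componentwise rewriting, split according to whether the domains are finite or infinite. I expect this bookkeeping to be the main obstacle: although every identity reduces to finite-set functoriality together with a consistency relation, the coherence of the case-split definition and the correct manipulation of corestrictions and inclusions demand care.

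For part (3) I would close the cycle (i)$\Rightarrow$(iv)$\Rightarrow$(iii)$\Rightarrow$(ii)$\Rightarrow$(i). The implications (iv)$\Rightarrow$(iii)$\Rightarrow$(ii) are trivial, finitely supported bijections being bijections and bijections being injections. For (ii)$\Rightarrow$(i), given a bijection $\pi:a\to a$ I extend it by the identity to a finitely supported $\tilde\pi:A\to A$; then $\tilde\pi(a)=a$ gives $(D[\tilde\pi](X))_a=D[\pi](X_a)$, and reading off the $a$-marginal of $D[\tilde\pi](X)\ed X$ yields $D[\pi](X_a)\ed X_a$, which is (i). The crucial implication is (i)$\Rightarrow$(iv): for an injection $\tau:A\to A$ and finite $a$ I set $c=a\cup\tau(a)$ and choose a bijection $\pi:c\to c$ with $\pi(i)=\tau(i)$ on $a$ (possible since $|c\setminus a|=|c\setminus\tau(a)|$); contravariant functoriality gives $D[\iota_{a,c}]\big(D[\pi](X_c)\big)=(D[\tau](X))_a$ while $D[\iota_{a,c}](X_c)=X_a$, so applying the fixed map $D[\iota_{a,c}]$ to $D[\pi](X_c)\ed X_c$ (which is (i) on $c$) yields $(D[\tau](X))_a\ed X_a$, whence $D[\tau](X)\ed X$ by part (1).

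Finally, for part (4), I would use the correspondence $\Psi_A:\SYM(D;A)\to\SYM(D)$ of Proposition~\ref{prop:exchangeablelaws} and the computation $\cL\big((D[\tau](X))_b\big)=\mu_{\tau(b)}\circ D[\widehat{\tau\circ\iota_{b,B}}]^{-1}=\mu_b$, where $\mu=\Psi_A(\cL(X))$ and the last equality is exchangeability of $\mu$ (Definition~\ref{def:exchangeable_law}) applied to the injection $b\to\tau(b)$. Thus $D[\tau](X)$ is $\pD_B$-valued with the same finite marginals $\mu_b$ as the canonical exchangeable measurement over $B$ attached to $\mu$; by part (1) it has the same law, which is therefore exchangeable and lies in $\SYM(D;B)$, and $\Psi_B\big(\cL(D[\tau](X))\big)=\mu=\Psi_A(\cL(X))$. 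Hence the map in question equals $\Psi_B^{-1}\circ\Psi_A$, a composition of bijections, and is a bijection.
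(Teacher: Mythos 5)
Your proposal is correct and follows essentially the same route as the paper's proof: measurability of $\pD_A$ as a countable intersection plus a witness from Proposition~\ref{prop:exchangeablelaws} for (1), componentwise functoriality bookkeeping for (2), the cycle (iv)$\Rightarrow$(iii)$\Rightarrow$(ii)$\Rightarrow$(i) and then (i)$\Rightarrow$(iv) for (3), and the identification with $\SYM(D)$ for (4). The only (harmless) local variations are that for (i)$\Rightarrow$(iv) you extend $\tau|_a$ to a bijection of the finite superset $c=a\cup\tau(a)$ and apply (i) there, where the paper instead invokes the rule $\mu\in\SYM(D)$ directly, and in (4) you verify $\Psi_B\circ\cL(D[\tau]\cdot)=\Psi_A$ by computing marginals, where the paper checks exchangeability of $D[\tau]X$ via a conjugating bijection $\tilde\pi$ with $\tilde\pi\circ\tau=\tau\circ\pi$.
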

	
		\begin{remark}
			Let $X$ be an exchangeable $D$-measurement using IDs $B$ whose law is represented by $\mu\in\SYM(D)$. Let $A\supseteq B$. Applying (4) to $\tau = \iota_{B,A}$ allows to represent $X$ as $X = D[\iota_{B,A}](\tilde X)$ with $\tilde X$ being an exchangeable $D$-measurement using IDs $A$, whose law is necessarily also represented by $\mu$. In case $\bZ\supseteq \bN$ such constructions are a basic approach to prove functional representation theorems for arrays, see \cite{aldous1982exchangeability}, \cite{aldous1985exchangeability} and \cite{austin2012exchangeable}.
		\end{remark}
	
		Combining the previous propositions with Theorems~\ref{thm:frt-weak}~and~\ref{thm:frt-weak-depth} gives the following reformulation of the FRTs:
		
		\begin{corollary}[Weak FRT for exchangeable random measurements]\label{cor:frt-rvs}
			For every exchangeable $D$-measurement $X = (\Xa)_{a\in\finA}$ there exists a $\ur$-almost sure natural transformation $\eta:R\rightarrow D$ such that 
			\begin{equation*}
			\big(\Xa\big)_{a\in\finA}~\ed~\Big(\eta_a\big((U_e)_{e\subseteq a}\big)\Big)_{a\in\finA},
			\end{equation*}
			where $U_e, e\in \finA$ are iid $\sim\unif[0,1]$. If $\depth(D) = k<\infty$ there is a $\urk$-a.s. natural transformation $\eta:\Rk\rightarrow D$ such that
			\begin{equation*}
			\big(\Xa\big)_{a\in\finA}~\ed~\Big(\eta_a\big((U_e)_{e\subseteq a, |e|\leq k}\big)\Big)_{a\in\finA}.
			\end{equation*}
		\end{corollary}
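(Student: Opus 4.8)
The plan is to read the corollary off the Weak FRT, Theorem~\ref{thm:frt-weak}, by passing through the dictionary between exchangeable $D$-measurements using IDs $A$ and exchangeable laws $\mu\in\SYM(D)$ furnished by Propositions~\ref{prop:exchangeablelaws}~and~\ref{prop:translation}. Given the exchangeable $D$-measurement $X=(X_a)_{a\in\finA}$, its law lies in $\SYM(D;A)$ and corresponds to a unique $\mu\in\SYM(D)$; since each $a\in\finA$ is already a finite subset of $A$, taking $c=a$ and $\pi=\id_a$ in Proposition~\ref{prop:exchangeablelaws} gives $\mu_a=\cL(X_a)$ for every $a\in\finA$. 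In particular $\SYM(D)\neq\emptyset$, so Proposition~\ref{prop:translation} is available. Applying Theorem~\ref{thm:frt-weak} to this $\mu$ yields a $\ur$-a.s. natural transformation $\eta:R\rightarrow D$ with $\mu=\ur\circ\eta^{-1}$.

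Next I would identify the canonical randomizer. With $U_e, e\in\finA$ iid $\unif[0,1]$, set $U_a=(U_e)_{e\subseteq a}\in R_a=[0,1]^{2^a}$. Because $R[\iota_{a',a}]$ is restriction of the index $2^a$ to $2^{a'}$, the family $U=(U_a)_{a\in\finA}$ is genuinely sampling consistent, hence a random $R$-measurement using IDs $A$, with marginals $\cL(U_a)=\ur_a=\unif[0,1]^{\otimes 2^a}$. Define $Y_a:=\eta_a(U_a)=\eta_a((U_e)_{e\subseteq a})$. Since $\mu=\ur\circ\eta^{-1}$, one has $\cL(Y_a)=\ur_a\circ\eta_a^{-1}=\mu_a=\cL(X_a)$ for every $a\in\finA$, so all one-dimensional marginals of $Y$ agree with those of $X$.

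The one point requiring care is that $\eta$ is only $\ur$-almost surely natural, so $Y$ is only almost surely sampling consistent: for each fixed pair $a'\subseteq a$ the a.s. naturality applied to $\tau=\iota_{a',a}$, combined with the genuine identity $R[\iota_{a',a}](U_a)=U_{a'}$, yields $Y_{a'}=D[\iota_{a',a}](Y_a)$ almost surely. As $A$ is countable there are only countably many such pairs, so almost surely all these relations hold simultaneously; on that full-measure event $Y$ takes values in $\pD_A$. Replacing $Y$ off this event by a fixed point of $\pD_A$ (non-empty by Proposition~\ref{prop:translation}(1), since $\SYM(D)\neq\emptyset$) produces a genuine $\pD_A$-valued random variable $\tilde Y$ with $\tilde Y_a=Y_a$ almost surely, whence $\cL(\tilde Y_a)=\mu_a=\cL(X_a)$ and $(\tilde Y_a)_{a\in\finA}=(Y_a)_{a\in\finA}$ almost surely.

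Finally I would invoke the marginal criterion of Proposition~\ref{prop:translation}(1): two $\pD_A$-valued random variables agree in distribution iff $X_a\ed X'_a$ for every $a\in\finA$. Applied to $X$ and $\tilde Y$ this gives $X\ed\tilde Y$, and since $(Y_a)_{a\in\finA}=(\tilde Y_a)_{a\in\finA}$ almost surely we obtain the asserted $(X_a)_{a\in\finA}\ed(\eta_a((U_e)_{e\subseteq a}))_{a\in\finA}$. The finite-depth statement follows verbatim after replacing Theorem~\ref{thm:frt-weak} by Theorem~\ref{thm:frt-weak-depth}, $(R,\ur)$ by $(\Rk,\urk)$, and $(U_e)_{e\subseteq a}$ by $(U_e)_{e\subseteq a,\,|e|\leq k}\in\Rk_a=[0,1]^{\binom{a}{\leq k}}$, with no other change. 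I expect the only genuine obstacle to be the passage from almost-sure to honest sampling consistency together with the verification that matching the single marginals $\mu_a$ already suffices; both are supplied by the countability of $\finA$ and the marginal criterion of Proposition~\ref{prop:translation}(1).
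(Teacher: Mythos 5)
Your proposal is correct and follows essentially the same route as the paper: pass from the measurement $X$ to the corresponding $\mu\in\SYM(D)$ via Proposition~\ref{prop:exchangeablelaws}, apply Theorem~\ref{thm:frt-weak} (resp.\ Theorem~\ref{thm:frt-weak-depth}), observe that both processes are (almost surely) $D_A$-valued, and conclude by the marginal criterion of Proposition~\ref{prop:translation}(1). Your explicit modification of $Y$ off a null set merely spells out what the paper compresses into the remark that $(\eta_a((U_e)_{e\subseteq a}))_{a\in\finA}$ takes values in $D_A$ almost surely.
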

		\begin{proof}
			By Proposition~\ref{prop:exchangeablelaws} there is a unique $\mu\in\SYM(D)$ with $\mu_a\sim X_a$ for every $a$. By Theorem~\ref{thm:frt-weak} there is a $\ur$-a.s. natural transformation $\eta:R\rightarrow D$ with $\mu = \ur\circ\eta^{-1}$. This gives $X_a \ed \eta_a\big((U_e)_{e\subseteq a}\big)$ for every $a\in\finA$. Since $\eta$ is $\mu$-a.s. natural transformation 
			$(\eta_a\big((U_e)_{e\subseteq a}\big))_{a\in\finA}$ takes values in $D_A$ almost surely and the same is true for $(X_a)_{a\in\finA}$. The equality in distribution at each $a\in\finA$ implies equality in distribution of the whole $\finA$-indexed processes by (1) of Proposition~\ref{prop:translation}. The finite-depth case follows the same way by applying Theorem~\ref{thm:frt-weak-depth}.
		\end{proof}
	
		\subsection{Natural extensions of array-type data structures}\label{sec:extension-arrays}
	
			Let $D=\ARRAY(\cX,I)$. Since $\SYM(D)\neq\emptyset$ the functor $D:\INJo\rightarrow\BOREL$ can be extended to a functor $D:\CINJo\rightarrow\BOREL$ by the construction of Definition~\ref{def:extension}. A more \emph{natural} extension is possible for array-type data structures. The special case $D=\SEQ(\cX) = \ARRAY(\cX,\square)$ is very instructive: let $A$ be countable infinite, an element $x\in D_A$ in the canonical extension is of the form
			$$x = (x_a)_{a\in\finA}~~\text{with}~~x_a = (x_{a}(i))_{i\in a} \in \cX^a.$$
			Let $x_i := x_{\{i\}}(i), i\in A$. The defining property of $D_A$ (sampling consistency) gives 
			$$(x_a)_{a\in\finA} = ((x_i)_{i\in a})_{a\in\finA},$$
			which obviously can be represented more naturally as $(x_i)_{i\in A} \in \cX^A$. This works for every $D = \ARRAY(\cX,I)$: a natural extension is based on extending the indexing system, which is a functor $I:\INJ\rightarrow\INJ$ with additional properties, to a functor $I:\CINJ\rightarrow\CINJ$ and defining the natural extension of $D$ as $D_A = \cX^{I_A}$ and $D[\tau](x) = x\circ I[\tau]$. The extension of $I$ is as follows:\\
			Let $A$ be countable infinite. Define $I_A = \cup_{a\in\finA}I_a$ and for an injection $\tau:B\rightarrow A$, with $B$ finite or infinite, define 
			\begin{equation*}
			I[\tau]:I_B\rightarrow I_A, I[\tau](\bbi) = \begin{cases}
			I[\hat\tau](\bbi),&~\text{if $B=b$ is finite},\\
			I\big[\reallywidehat{\tau\circ\iota_{\dom(\bbi),B}}\big](\bbi),&~\text{if $B$ is infinite}.
			\end{cases}
			\end{equation*}
			Lemma~\ref{lemma:ind} later provides the main technical details to see that this extends $I$ to countable infinite sets, satisfying functor properties and also satisfying the indexing system axioms for countable infinite sets. Some examples: let $B, A$ be arbitrary countable and $\tau:B\rightarrow A$ be an injection:
			\begin{itemize}
				\item $D = \SEQ(\cX) = \ARRAY(\cX,\square)$ has natural extension $D_A = \cX^A$ and $D[\tau](x) = x\circ\tau$,
				\item $D = \ARRAY(\cX,\binom{\square}{k})$ has natural extension $D_A = \cX^{\binom{A}{k}}$ and $D[\tau](x) = x\circ\im(\tau)$,
				\item $D = \ARRAY(\cX,2^{\square})$ has natural extension $D_A = \cX^{\finA}$ and $D[\tau](x) = x\circ\im(\tau)$,
				\item $D = \ARRAY(\cX,\square^*_{\neq})$ has natural extension $D_A = \cX^{A^*_{\neq}}$ and $D[\tau](x) = x\circ\vec{\tau}$.
			\end{itemize}
			In particular, exchangeable random measurements using IDs $\bN$ now fit the framework (\ref{eq:array}) presented in the introduction: the group action on indices is $\bSi\times I_{\bN}\rightarrow I_{\bN}, (\pi, \bbi)\mapsto I[\pi](\bbi)$ and following Proposition~\ref{prop:exchangeablelaws} shows that laws of exchangeable processes $(X_{\bbi})_{\bbi\in I_{\bN}}$ can be identified with $\SYM(\ARRAY(\cX,I))$ (by passing from canonical to natural extension). An exchangeable array in natural extension $(X_{\bbi})_{\bbi\in I_{\bN}}$ corresponds to $\big((X_{\bbi})_{\bbi\in I_a}\big)_{a\in\finN}$ in canonical extension.
		
		\begin{remark}
			With $D=\SETSYSTEM$ it is not obvious if there is an extension that is any more "natural" than the canonical one from Definition~\ref{def:extension}. Note that both in \cite{forman2018representation} and \cite{gerstenberg2020exchangeable} exchangeable random objects of set system-type (hierarchies/interval hypergraphs) have been introduced as random sequences of finite growing exchangeable structures satisfying sampling consistency, that is in canonical extension. 
		\end{remark}
	
		\subsection{Pointwise convergence, $U$-statistics and the independence property}\label{sec:pointwise}
		
		Let $D$ be a BDS with $\SYM(D)\neq\emptyset$ and $D:\CINJo\rightarrow\BOREL$ be the canonical extension of $D$. Propositions~\ref{prop:exchangeablelaws} and \ref{prop:translation} show that studying $\SYM(D)$ falls into the framework (\ref{eq:groupaction}) presented in the introduction: a measurable group action $\bS_{\bN}\times\cS\rightarrow\cS, (\pi,x)\mapsto \pi x$ is derived by defining 
		$$\cS = D_{\bN}~~~\text{and}~~~\pi x = D[\pi^{-1}](x),$$
		and $\SYM(D)$ can be identified with $\SYM(D;\bN)$, that is with laws of $\cS=D_{\bN}$-valued random variables $X$ with $\pi X\ed X$ for all $\pi\in\bS_{\bN}$. Further, $X$ is exchangeable already iff $\pi X\ed X$ for all $\pi\in\bSi\subseteq\bS_{\bN}$.\\
		Ergodic theory results become directly applicable: let $\cI$ be the $\sigma$-field of measurable subsets $M\subseteq \cS = D_{\bN}$ with $D[\pi](M) = M$ for all $\pi\in\bSi$. An exchangeable $D_{\bN}$-valued $X$ is called ergodic iff $\bP[X\in M]\in\{0,1\}$ for all $M\in\cI$. Let $\eSYM(D;\bN)\subseteq \SYM(D;\bN)$ be the set ergodic exchangeable laws, which is non-empty measurable. Ergodic decomposition, Theorem~A1.4 in \cite{kallenberg1997foundations}, gives that the following two maps are bijections inverse to each other:
		\begin{align*}
			\sP\big(\eSYM(D;\bN)\big)\longrightarrow\SYM(D;\bN)&,~~~~\Xi\mapsto \int_{\eSYM(D;\bN)}\mu(\cdot)d\Xi(\mu)\\
			\SYM(D;\bN)\longrightarrow\sP\big(\eSYM(D;\bN)\big)&,~~~~\cL(X)~\mapsto~\cL\big(~\bP[X\in\cdot|X^{-1}(\cI)]~\big).
		\end{align*}
		The abstract de~Finetti theorem, Theorem~\ref{thm:definetti}, follows from this by identifying $\eSYM(D;\bN)$ with exchangeable laws having the independence property, that is with $\eSYM(D)$. This is shown in Theorem~\ref{thm:independence} below.
		
		\begin{remark}[Convex decomposition]
			Let $A$ be countable infinite and consider collections $(\mu_a)_{a\in\finA}$ with $\mu\in\SYM(D)$. A strict partial order on $\finA$ is given by comparing sets by cardinality, that is $b<a$ iff $|b|<|a|$. This strict partial order is directed to the right and countable at infinity. For $b<a$ let $T_{ba}:b\rightarrow a$ be a uniform random injection and define the probability kernel $p_{ba}:D_a\rightarrow \sP(D_b), x\mapsto \cL(D[T_{ba}](x))$. By combinatorial arguments, a collection $(\mu_a)_{a\in\finA}$ comes from some $\mu\in\SYM(D)$ iff $\mu_b = p_{ba}\mu_a$ for all $b<a$. Modulo topological assumptions: Proposition~1.1 in Chapter~IV of \cite{lauritzen1988extremal} gives a simplex decomposition for such collections $(\mu_a)_{a\in\finA}$.
		\end{remark}
	
		The proof for characterizing ergodicity via independence heavily relies on the following, for $\pi\in\bSi$ write $|\pi|\leq n$ iff $\pi(i)=i$ for all $i>n$:
		
		\begin{theoremB}[Pointwise convergence, Theorem 1.2 in \cite{lindenstrauss2001pointwise} applied to $\bSi$]
			For every $\cL(X)\in\SYM(D;\bN)$ and measurable $f:D_{\bN}\rightarrow\bR$ with $\bE[|f(X)|]<\infty$ it holds that 
			\begin{equation*}
			\frac{1}{n!}\sum_{\pi\in\bSi,|\pi|\leq n}f\circ D[\pi](X)~~\overset{n\rightarrow\infty}{\longrightarrow}~~\bE[f(X)|X^{-1}(\cI)]~~\text{almost surely.}
			\end{equation*}
		\end{theoremB}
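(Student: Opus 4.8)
The plan is to deduce Theorem~B as a direct application of Lindenstrauss's pointwise ergodic theorem (Theorem~1.2 in \cite{lindenstrauss2001pointwise}) to the measure-preserving action of $\bSi$ on the Borel probability space $(\cS,\cL(X))$, where $\cS = D_{\bN}$ and $\pi x = D[\pi^{-1}](x)$ as fixed before the statement. First I would record that this is a genuine measure-preserving dynamical system: by Proposition~\ref{prop:translation}(3) the law $\cL(X)\in\SYM(D;\bN)$ is invariant under $D[\pi]$ for every $\pi\in\bSi$, so each $\pi$ acts as an invertible (inverse $D[\pi^{-1}]$) measure-preserving transformation of $(\cS,\cL(X))$. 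Since $\cS$ is Borel and $\bSi$ is a countable discrete group with each $D[\pi]$ measurable, the joint measurability and regularity hypotheses required by \cite{lindenstrauss2001pointwise} hold.

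Next I would verify the combinatorial hypotheses for the candidate averaging sequence $F_n = \{\pi\in\bSi : |\pi|\leq n\}$. The group $\bSi=\bigcup_n F_n$ is locally finite, being an increasing union of the finite subgroups $F_n$ (each of order $n!$), hence countable and amenable. Because the $F_n$ are \emph{nested subgroups}, the F\o{}lner property is immediate: any fixed $g\in\bSi$ lies in some $F_m$, and then $gF_n = F_n$ for all $n\geq m$, so $|gF_n\,\triangle\,F_n|=0$ eventually. Moreover $(F_n)$ is \emph{tempered} (Shulman's condition) with constant $1$: for $k<n$ we have $F_k\subseteq F_n$ and $F_n$ is a group, whence $F_k^{-1}F_n = F_n$ and therefore $\big|\bigcup_{k<n}F_k^{-1}F_n\big| = |F_n|$. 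Finally $|F_n| = n!\to\infty$. Thus $(F_n)$ is a tempered F\o{}lner sequence and Theorem~1.2 in \cite{lindenstrauss2001pointwise} applies to every $f\in L^1(\cS,\cL(X))$.

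Applying that theorem gives, for $\cL(X)$-almost every $x$, convergence of the averages $\frac{1}{|F_n|}\sum_{\pi\in F_n}f(\pi^{-1}\cdot x)$ to $\bE_{\cL(X)}[f\mid\cJ](x)$, the conditional expectation onto the invariant $\sigma$-field $\cJ$ of the action. Two bookkeeping steps then produce the stated form. First, the orientation matches automatically: since $\pi^{-1}\cdot x = D[(\pi^{-1})^{-1}](x) = D[\pi](x)$, one has $f(\pi^{-1}\cdot x) = (f\circ D[\pi])(x)$, so the averaged quantity is exactly $\frac{1}{n!}\sum_{\pi\in\bSi,\,|\pi|\leq n}f\circ D[\pi]$ (and in any case the symmetry of the subgroup $F_n$ makes the left- and right-oriented averages coincide). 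Second, $\cJ$ is precisely the $\bSi$-invariant $\sigma$-field $\cI$ on $\cS$ fixed before the statement, so pulling the $\cL(X)$-a.s. statement back along $X$ (which has law $\cL(X)$) turns $\cL(X)$-a.e.\ convergence on $\cS$ into $\bP$-a.s.\ convergence of the random variables and, by the change-of-variables identity for conditional expectations, identifies the limit $\bE_{\cL(X)}[f\mid\cI]\circ X$ with $\bE[f(X)\mid X^{-1}(\cI)]$.

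Since the analytic core is supplied by \cite{lindenstrauss2001pointwise}, there is no single hard step; the only points demanding care are foundational. The crucial verification is that $(F_n)$ is a bona fide \emph{tempered} F\o{}lner sequence, which is painless here precisely because $\bSi$ is locally finite and the $F_n$ are nested subgroups, so both the F\o{}lner and Shulman conditions hold trivially. The second delicate point is the identification of the limit: one must check that the invariant $\sigma$-field of the system $(\cS,\cL(X),\bSi)$ coincides with the $\cI$ defined earlier and that $X^{-1}(\cI)$ is the matching sub-$\sigma$-field on the underlying probability space. Confirming the measurability of the action via the Borel space structure on $D_{\bN}$ (Proposition~\ref{prop:translation}(1)) and matching the group-sum orientation using symmetry of $F_n$ complete the argument.
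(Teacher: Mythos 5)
Your proposal is correct and is exactly the route the paper intends: the paper offers no proof beyond the citation, presenting Theorem~B as Lindenstrauss's Theorem~1.2 applied to $\bSi$ with the averaging sets $F_n=\{\pi\in\bSi:|\pi|\leq n\}$. You supply precisely the verifications the paper leaves implicit --- that the nested finite subgroups $F_n$ form a tempered F\o{}lner sequence (trivially, with Shulman constant $1$), that the action $\pi x = D[\pi^{-1}](x)$ is measure-preserving for $\cL(X)\in\SYM(D;\bN)$, and the matching of orientation and of the invariant $\sigma$-field $\cI$ under pullback along $X$ --- all of which are accurate.
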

		
		Theorem~B is applied to functions $f$ obtained from \emph{kernel functions} $g:D_{[k]}\rightarrow\bR, k\geq 0$ via $f = g\circ D[\iota_{[k],\bN}]$. For $n\geq k$ let 
		\begin{equation*}
			\avg(g,n):D_{[n]}\rightarrow \bR,~~~~\avg(g,n) = \frac{(n-k)!}{n!}\sum_{\tau:[k]\rightarrow[n]~\text{injective}}g\circ D[\tau].
		\end{equation*}
		For a uniform random injection $T_{k,n}:[k]\rightarrow[n]$ it is
		$$\avg(g,n)(x_n) = \bE\big[g\circ D[T_{k,n}](x_n)\big]~~~\text{for every}~x_n\in D_{[n]}$$
		and basic combinatorial arguments together with functorality of $D$ gives for every random $D$-measurement $X = (X_{a})_{a\in\finN}$ and $n\geq k$
		\begin{equation*}
			\avg(g,n)(X_{[n]}) = \frac{1}{n!}\sum_{\pi\in\bSi,|\pi|\leq n}f\circ D[\pi](X).
		\end{equation*}
		
		Theorem~B directly yields the following
		
		\begin{corollary}\label{cor:conv}
			For an exchangeable $D$-measurement $X=(\Xa)_{a\in\finN}$ and measurable $g:D_{[k]}\rightarrow\bR, k\geq 0$ with $\bE[g(X_{[k]})|]<\infty$ it is
			\begin{equation*}
			\avg(g,n)(X_{[n]})\overset{n\rightarrow\infty}{\longrightarrow} \bE[g(X_{[k]})|X^{-1}(\cI)]~~\text{almost surely.}
			\end{equation*}
		\end{corollary}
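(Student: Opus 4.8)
The plan is to read the Corollary as the special case of Theorem~B obtained by lifting the kernel function $g$ to a function on $D_{\bN}$. Concretely, I would set $f = g\circ D[\iota_{[k],\bN}]:D_{\bN}\rightarrow\bR$, where $\iota_{[k],\bN}:[k]\rightarrow\bN$ is the inclusion and $D[\iota_{[k],\bN}]:D_{\bN}\rightarrow D_{[k]}$ is its action under the canonical extension of Definition~\ref{def:extension}. The first thing to verify is that $f$ evaluates on $X$ exactly as $g$ does on the $k$-coordinate: since the restriction of $\iota_{[k],\bN}$ to its image is the identity on $[k]$, the extension formula gives $D[\iota_{[k],\bN}](X) = X_{[k]}$, and hence $f(X) = g(X_{[k]})$ almost surely.

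With this identification the integrability hypothesis transfers directly, $\bE[|f(X)|] = \bE[|g(X_{[k]})|]<\infty$, so Theorem~B applies to $f$ and to $\cL(X)\in\SYM(D;\bN)$ (an exchangeable $D$-measurement is in particular a random one, so it qualifies). Theorem~B then yields that $\frac{1}{n!}\sum_{\pi\in\bSi,|\pi|\leq n}f\circ D[\pi](X)$ converges almost surely to $\bE[f(X)\,|\,X^{-1}(\cI)]$. It remains only to rewrite both sides in the form asserted by the Corollary: the left-hand side equals $\avg(g,n)(X_{[n]})$ by the combinatorial identity established immediately before the statement, while the right-hand side equals $\bE[g(X_{[k]})\,|\,X^{-1}(\cI)]$ by the identification $f(X)=g(X_{[k]})$. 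Combining these gives the claim.

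The proof is therefore essentially a bookkeeping step, and I do not expect a genuine obstacle: all the analytic content sits in Theorem~B, and the only arithmetic---rewriting the average over injections $\tau:[k]\rightarrow[n]$ as the Ces\`aro average over $\pi\in\bSi$ with $|\pi|\leq n$---has already been carried out before the statement. The only points requiring care are the verification that the extension formula indeed collapses $D[\iota_{[k],\bN}](X)$ to $X_{[k]}$, so that both the integrability condition and the invariant conditional expectation transfer verbatim, and the observation that $\cL(X)$ lies in $\SYM(D;\bN)$ so that Theorem~B is legitimately applicable.
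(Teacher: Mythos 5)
Your proposal is correct and matches the paper's argument exactly: the paper likewise defines $f = g\circ D[\iota_{[k],\bN}]$, establishes the identity $\avg(g,n)(X_{[n]}) = \frac{1}{n!}\sum_{\pi\in\bSi,|\pi|\leq n}f\circ D[\pi](X)$ immediately before the statement, and then invokes Theorem~B, exactly as you do. The bookkeeping checks you flag (that $D[\iota_{[k],\bN}](X)=X_{[k]}$ under the canonical extension, and that $\cL(X)\in\SYM(D;\bN)$) are the right ones and go through as you describe.
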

	
		\begin{remark}	
			An alternative approach to Corollary~\ref{cor:conv} is by backwards martingale convergence; however, the proof using pointwise convergence theorem is much more direct.
		\end{remark}
	
		Basic measure theoretic considerations give that $X = (\Xa)_a$ is ergodic iff for every $k\geq 0$ and bounded measurable kernel $g:D_{[k]}\rightarrow \bR$ it is $\bE[g(X_{[k]})|X^{-1}(\cI)]\as \bE[g(X_{[k]})]$ a.s. constant, which is equivalent to the variance of $\bE[g(X_{[k]})|X^{-1}(\cI)]$ being zero. For every exchangeable $X$, not necessarily ergodic, and every square integrable kernel $g:D_{[k]}\rightarrow\bR$, that is $\bE[g^2(X_{[k]})]<\infty$, simple calculations using exchangeability, sampling consistency and functorality of $D$ give for every $n\geq k$
		
		\begin{equation}\label{eq:variance}
			\bVa\big(\avg(g,n)(X_{[n]})\big) = \frac{(n-k)!}{n!}\sum_{a\in\binom{[n]}{k}}\sum_{\pi:[k]\rightarrow a~\text{bij.}}\Cov\Big(g(X_{[k]}), g\circ D[\pi](X_a)\Big).
		\end{equation}
		
		This is used to prove:
		
		\begin{theorem}\label{thm:independence}
			Let $X=(\Xa)_{a\in\finN}$ be an exchangeable $D$-measurement. Equivalent are:
			\begin{enumerate}
				\item[(i)] $X$ is ergodic, 
				\item[(ii)] $X$ has the independence property: $\Xa, \Xb$ are stochastically independent for all $a,b\in\finN$ with $a\cap b=\emptyset$,
				\item[(iii)] for every countable set $\cG\subseteq\bigcup_{k\geq 0}\bR^{D_{[k]}}$ of bounded measurable functions there exists a \emph{deterministic} sequence $(x_n)_{n\in\bN}$ with $x_n\in D_{[m_n]}$ such that $m_n\rightarrow\infty$ and for every $g\in\cG, g:D_{[k]}\rightarrow\bR$
				$$\bE[g(X_{[k]})] = \lim\limits_{n\rightarrow\infty}\avg(g, m_n)(x_n).$$ 
			\end{enumerate} 
		\end{theorem}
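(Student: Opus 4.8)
The plan is to prove the cycle of implications (i)$\Rightarrow$(iii)$\Rightarrow$(ii)$\Rightarrow$(i), drawing on three facts already available: the pointwise convergence of Corollary~\ref{cor:conv}, the variance identity~(\ref{eq:variance}), and the criterion stated just before~(\ref{eq:variance}), namely that $X$ is ergodic iff $\bVa(\bE[g(X_{[k]})\mid X^{-1}(\cI)])=0$ for every $k\geq 0$ and every bounded measurable $g:D_{[k]}\rightarrow\bR$.

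First I would prove (i)$\Rightarrow$(iii). If $X$ is ergodic then $\bE[g(X_{[k]})\mid X^{-1}(\cI)]=\bE[g(X_{[k]})]$ almost surely, so Corollary~\ref{cor:conv} gives $\avg(g,n)(X_{[n]})\rightarrow\bE[g(X_{[k]})]$ almost surely for each bounded $g$. Since $\cG$ is countable, the intersection over $g\in\cG$ of these almost-sure convergence events still has probability one; choosing any sample point $\omega$ in it and setting $m_n=n$ and $x_n=X_{[n]}(\omega)\in D_{[n]}$ produces a deterministic sequence with $m_n\rightarrow\infty$ and $\avg(g,m_n)(x_n)\rightarrow\bE[g(X_{[k]})]$ for every $g\in\cG$, which is exactly (iii).

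Next, (iii)$\Rightarrow$(ii). Fix disjoint $a,b$ with $|a|=k$, $|b|=l$; by exchangeability the joint law of $(X_a,X_b)$ depends only on $k,l$, so we may take $a=[k]$ and $b=\{k+1,\dots,k+l\}\subseteq[k+l]$. For bounded $\phi:D_{[k]}\rightarrow\bR$ and $h:D_{[l]}\rightarrow\bR$, let $\sigma:[l]\rightarrow[k+l]$, $j\mapsto k+j$, and form the product kernel $\Phi:D_{[k+l]}\rightarrow\bR$, $\Phi=(\phi\circ D[\iota_{[k],[k+l]}])\cdot(h\circ D[\sigma])$, so that $\bE[\Phi(X_{[k+l]})]=\bE[\phi(X_{[k]})\,h(D[\sigma](X_{[k+l]}))]$. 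Applying (iii) to $\cG=\{\phi,h,\Phi\}$ yields $x_n\in D_{[m_n]}$, $m_n\rightarrow\infty$, realizing the three limits simultaneously. The crucial combinatorial step is that $\avg(\Phi,m_n)(x_n)$ equals the expectation of $\phi(D[T\circ\iota_{[k],[k+l]}](x_n))\,h(D[T\circ\sigma](x_n))$ for a uniform random injection $T:[k+l]\rightarrow[m_n]$, whose two marginal restrictions are themselves uniform injections of $[k]$ and of $[l]$ and become asymptotically independent as $m_n\rightarrow\infty$ (the images of two independent uniform injections collide with probability $O(kl/m_n)$, and the disjoint-image conditional law is exactly that of $T$). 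Hence $\avg(\Phi,m_n)(x_n)-\avg(\phi,m_n)(x_n)\,\avg(h,m_n)(x_n)\rightarrow0$, and letting $n\rightarrow\infty$ gives $\bE[\phi(X_{[k]})\,h(D[\sigma](X_{[k+l]}))]=\bE[\phi(X_{[k]})]\,\bE[h(X_{[l]})]$. As $\phi$ and $h$ range over all bounded kernels this forces $X_{[k]}$ and $X_{\{k+1,\dots,k+l\}}$ to be independent, which is the independence property.

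Finally, (ii)$\Rightarrow$(i). For bounded $g:D_{[k]}\rightarrow\bR$ I would analyze the right-hand side of~(\ref{eq:variance}): every term with $a\cap[k]=\emptyset$ vanishes, because then $X_{[k]}$ and $X_a$ are independent by (ii) and the two arguments of the covariance are functions of these, while the covariances are uniformly bounded (by $\|g\|_\infty^2$ via exchangeability). Grouping the surviving terms by $j=|a\cap[k]|\geq1$ shows there are $O(n^{k-j})$ admissible $a$, each weighted by $(n-k)!/n!=O(n^{-k})$, so their total contribution is $O(n^{-1})\rightarrow0$; thus $\bVa(\avg(g,n)(X_{[n]}))\rightarrow0$. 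Since $g$ is bounded, Corollary~\ref{cor:conv} upgrades to $L^2$-convergence $\avg(g,n)(X_{[n]})\rightarrow\bE[g(X_{[k]})\mid X^{-1}(\cI)]$, so the variances converge to $\bVa(\bE[g(X_{[k]})\mid X^{-1}(\cI)])$, which is therefore $0$ for all $k$ and $g$; the stated criterion then gives ergodicity.

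The main obstacle is the decoupling estimate inside (iii)$\Rightarrow$(ii): one must carefully justify that averaging a product kernel over uniform injections of $[k+l]$ factorizes, in the limit, into the product of the two block-averages, which rests on the $O(kl/m_n)$ bound for image collisions of independent uniform injections and on identifying the disjoint-image conditional law with a uniform injection of $[k+l]$. By contrast, the variance asymptotics in (ii)$\Rightarrow$(i) are routine once~(\ref{eq:variance}) is granted, and (i)$\Rightarrow$(iii) is an immediate consequence of Corollary~\ref{cor:conv} together with the countability of $\cG$.
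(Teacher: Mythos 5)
Your proposal is correct and follows essentially the same route as the paper: the cycle (i)$\Rightarrow$(iii)$\Rightarrow$(ii)$\Rightarrow$(i), with (i)$\Rightarrow$(iii) via Corollary~\ref{cor:conv} and countability of $\cG$, (iii)$\Rightarrow$(ii) via applying (iii) to the three kernels $\{\phi,h,\Phi\}$ and the decoupling of uniform random injections (the paper phrases this with two independent injections conditioned on disjoint images, which is the same estimate you describe), and (ii)$\Rightarrow$(i) via the variance formula~(\ref{eq:variance}) and counting the $a$ with $a\cap[k]\neq\emptyset$. No gaps.
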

		\begin{proof}
			(i)$\Rightarrow$(iii)$\Rightarrow$(ii)$\Rightarrow$(i) is shown.\\
			(i)$\Rightarrow$(iii). By Corollary~\ref{cor:conv} $\avg(g,n)(X_{[n]})\rightarrow \bE[g(X_{[k]})]$ a.s. for every $g\in\cG$ defined on $D_{[k]}$. Since $\cG$ is countable the convergence almost surely holds simultaneously over $\cG$, take $x_n = X_n(\omega)$ for some $\omega$ from the corresponding probability-one event; $m_n=n$ in this case.\\
			(iii)$\Rightarrow$(ii). Let $a, b\in\finN$ with $a\cap b = \emptyset$ and $f:\Da\rightarrow\bR, g:\Db\rightarrow\bR$ be bounded measurable, so $\bE[f(X_a)g(X_b)] = \bE[f(X_a)]\bE[g(X_b)]$ is to be shown. Let $k\in\bN$ be such that $a\cup b\subseteq [k]$ and $h:D_{[k]}\rightarrow\bR$ be 
			$$h = \big(f\circ D[\iota_{a,[k]}]\big)\cdot\big(g\circ D[\iota_{b,[k]}]\big).$$ 
			Applying (iii) to the three-element set $\cG = \{f\circ D[\iota_{a,[k]}], g\circ D[\iota_{b,[k]}], h\}$ gives a deterministic sequence $(x_n)$ with $x_n\in D_{[m_n]}$, $m_n\rightarrow\infty$, such that for a uniform random injection $T_{k,m_n}:[k]\rightarrow[m_n]$ it holds that 
			\begin{align*}
			\bE[f(X_a)] = \bE\big[f\big(D[\iota_{a,[k]}](X_{[k]})\big)\big] &= \lim_{n\rightarrow\infty}\bE\Big[f\Big(D[T_{k,m_n}\circ\iota_{a,[k]}](x_n)\Big)\Big],\\
			\bE[g(X_b)] = \bE\big[g\big(D[\iota_{b,[k]}](X_{[k]})\big)\big] &= \lim_{n\rightarrow\infty}\bE\Big[g\Big(D[T_{k,m_n}\circ\iota_{b,[k]}](x_n)\Big)\Big],\\
			\bE[f(X_a)g(X_b)] &= \lim_{n\rightarrow\infty}\bE\Big[f\Big(D[T_{k,m_n}\circ\iota_{a,[k]}](x_n)\Big)g\Big(D[T_{k,m_n}\circ\iota_{b,[k]}](x_n)\Big)\Big].
			\end{align*}
			Let $T'_{k,m_n}$ be another random uniform injection $[k]\rightarrow[m_n]$ independent from $T_{k,m_n}$ and let $A_{k,m_n} = \{T_{k,m_n}(a)\cap T'_{k,m_n}(b) = \emptyset\}$. Elementary combinatorial calculations show that $\bP[A_{k,m_n}]\rightarrow 1$ as $n\rightarrow\infty$ and that for every fixed $n$ with $m_n\geq k$ the joint distribution of $(T_{k,m_n}\circ\iota_{a,[k]}, T_{k,m_n}\circ\iota_{b,[k]})$ is the same as that of $(T_{k,m_n}\circ\iota_{a,[k]}, T'_{k,m_n}\circ\iota_{b,[k]})$ conditioned on $A_{k,m_n}$. This gives
			\begin{align*}
			\bE[f(X_a)g(X_b)] &= \lim_{n\rightarrow\infty}\bE\Big[f\Big(D[T_{k,m_n}\circ\iota_{a,[k]}](x_n)\Big)g\Big(D[T_{k,m_n}\circ\iota_{b,[k]}](x_n)\Big)\Big]\\
			&= \lim_{n\rightarrow\infty}\bE\Big[f\Big(D[T_{k,m_n}\circ\iota_{a,[k]}](x_n)\Big)g\Big(D[T'_{k,m_n}\circ\iota_{b,[k]}](x_n)\Big)~\Big|~A_{k,m_n}~\Big]\\
			&= \lim_{n\rightarrow\infty}\bE\Big[f\Big(D[T_{k,m_n}\circ\iota_{a,[k]}](x_n)\Big)\Big]\bE\Big[g\Big(D[T'_{k,m_n}\circ\iota_{b,[k]}](x_n)\Big)\Big]\\
			&= \bE[f(X_a)]\bE[g(X_b)].
			\end{align*}
			(ii)$\Rightarrow$(i). Let $g:D_{[k]}\rightarrow\bR, k\geq 0$ be bounded measurable, it is shown that the variance of $\bE[g(X_{[k]})|X^{-1}(\cI)]$ is zero. By pointwise and dominated convergence
			\begin{equation*}
			\bVa(\bE[g(X_{[k]})|X^{-1}(\cI)]) = \lim\limits_{n\rightarrow\infty}\bVa\Big(\avg(g,n)\big(X_{[n]}\big)\Big).
			\end{equation*} 
			By (\ref{eq:variance}) the variance of $\avg(g,n)(X_{[n]})$ depends on covariances $\Cov(g(X_{[k]}), g\circ D[\pi](X_a))$ with $a\in\binom{[n]}{k}$ and $\pi:[k]\rightarrow a$ bijective. By assumption (ii) such a covariance is zero if $[k]\cap a = \emptyset$. For $n\geq 2k$ there are $\binom{n-k}{k}$ of such $a\in\binom{[n]}{k}$, bounding $|g|\leq C$ gives 
			\begin{equation*}
				\bVa\Big(\avg(g,n)\big(X_{[n]}\big)\Big) \leq \frac{k!(n-k)!}{n!}\big[\binom{n}{k} - \binom{n-k}{k}\big]C^2 = \big[1 - \binom{n-k}{k}/\binom{n}{k}\big]C^2,
			\end{equation*}
			for fixed $k$ the upper bound goes to zero as $n\rightarrow\infty$.
		\end{proof}

		\begin{remark}[Asymptotic of $U$-statistics]\label{rem:asymptotic-u-statistics}
			Let $g:D_{[k]}\rightarrow\bR$ be a symmetric measurable kernel, that is $g\circ D[\pi] = g$ for every bijection $\pi:[k]\rightarrow[k]$. In this case $g$ can be extended to have domain $D_a$ for every $a$ with $|a| = k$. If $X = (\Xa)_{a\in\finN}$ is exchangeable with $\bE[g^2(X_{[k]})]<\infty$ the variance formula (\ref{eq:variance}) can be further reduced: for $n\geq 2k$ it is		
			\begin{equation*}
				\bVa\big(\avg(g,n)(X_{[n]})\big) = \sum_{l=0}^k \frac{\binom{k}{l}\binom{n-k}{k-l}}{\binom{n}{k}}c_{l}~~~\text{with}~~~c_{l} = \Cov(g(X_{[k]}), g(X_{[l]+\{k+1,\dots,2k-l\}})).
			\end{equation*}
			In case $D=\SEQ(\cX)$ this follows from classical $U$-statistics theory, see \cite{korolyuk2013theory}, and in this case $c_l$ has a representation as the variance of a conditional expectation, which directly gives $c_l\geq 0$. This also holds for general $D$: by Corollary~\ref{cor:frt-rvs} there is a $\unif(R)$-a.s natural transformation $\eta:R\rightarrow D$ representing the law of $X$, for every $a\in\finN$ let $X_a := \eta_a((U_e)_{e\subseteq a})$ with $U_e, e\in\finN$ iid $\sim\unif[0,1]$. In this special construction of $X$, the same ideas as for the sequential case give
			\begin{equation*}
				c_l = \Cov\big(g(X_{[k]}), g(X_{[l]+\{k+1,\dots,2k-l\}})\big) = \bVa\Big(\bE\big[g(X_{[k]})\big|(U_e)_{e\subseteq[l]}\big]\Big)~\geq 0.
			\end{equation*}
			In case $X$ is ergodic it is $c_0 = 0$, which follows directly from the independence property and is also reflected in the previous formula noting that for ergodic laws no randomness from $U_{\emptyset}$ is needed in functional representations, see Remark~\ref{rem:ergodicmod}.\\
			Theorem~17 from \cite{austern2018limit} can be applied to consider the asymptotic distribution of $\avg(g,n)(X_{[n]})$: in case $X$ is ergodic it is
			\begin{equation*}
				\sqrt{n}~\Big[~\avg(g,n)(X_{[n]})~-~\bE[g(X_{[k]})]~\Big]~~\overset{n\rightarrow\infty}{\longrightarrow}~~\Norm(0,\sigma^2)~~\text{in distribution},
			\end{equation*}
			where the asymptotic variance $\sigma^2\geq 0$ can be found as 
			\begin{equation*}
				\sigma^2 = \lim_{n\rightarrow\infty} n\sum_{l=1}^k \frac{\binom{k}{l}\binom{n-k}{k-l}}{\binom{n}{k}}c_{l} = k^2 c_1,
			\end{equation*}
			with $c_1 = \Cov\big(g(X_{\{1,\dots,k\}}), g(X_{\{1,k+1,k+2,\dots,2k-1\}})\big) = \bVa\big(\bE\big[g(X_{[k]})\big|U_{\{1\}}\big]\big)$.
		\end{remark}

	\subsection{Limits of combinatorial structures}\label{sec:combinatorial}
			
			Let $D:\INJo\rightarrow\FIN_+$ be a combinatorial data structure. For simplicity assume the finite set $a$ can be recovered from $x\in D_a$, so one can define $|x|:=|a|$. If this is not the case replace $D$ by the isomorphic BDS $\tilde D$ defined as $\tilde D_a = \{(a,x)|x\in D_a\}$ and $\tilde D[\tau]((a,x)) = (b, D[\tau](x))$.\\
			For $x\in D_b, y\in D_a$ with $|x|\leq|y|$ let
			\begin{equation*}
				\density(x,y) = \frac{(|a|-|b|)!}{|a|!}\Big|\big\{\tau:b\rightarrow a~\text{injective}~:~D[\tau](y) = x~\big\}\Big|,
			\end{equation*}
			that is $\density(x,y) = \bP[D[T_{ba}](y) = x]$ for a uniform random injection $T_{ba}:b\rightarrow a$. The value $\density(x,y)\in[0,1]$ is interpreted as the (combinatorial) density of the smaller structure $x$ within the larger structure $y$.
			
			\begin{definition}[Limits of combinatorial structures]
				A sequence $\bbx = (x_n)_{n\geq 1}$ with $x_n \in D_{a_n}$ is said to be convergent iff $|x_n| = |a_n|\rightarrow\infty$ and for every $x\in D_b$ the limit
				$$\lim_{n\rightarrow\infty}\density(x, x_n) \in [0,1]$$
				exists. In this case, the limit of $\bbx$ is the rule that maps $x\in D_b$ to $\lim_n\density(x,x_n)\in [0,1]$.
			\end{definition}
		
			The following is an application of Theorem~\ref{thm:independence} for the combinatorial case, technical details are in the Appendix.
			
			\begin{theorem}\label{thm:correspondence-limits}
				Limits of convergent sequences coincide with $\eSYM(D)$: for every convergent sequence $\bbx = (x_n)_{n\geq 1}$ there is exactly one rule $\mu\in\eSYM(D)$ such that 
				\begin{equation}\label{eq:density}
					\mu_b(\{x\}) = \lim_{n\rightarrow\infty}\density(x,x_n)~~~\text{for every $b$ and}~x\in D_b
				\end{equation} 
				and conversely, every $\mu\in\eSYM(D)$ is of this form for some convergent sequence $\bbx$.
			\end{theorem}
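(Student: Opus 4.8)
The plan is to connect the combinatorial density $\density(\cdot,\cdot)$ with the averaging functionals $\avg(g,n)$ and then feed everything into the ergodicity characterization of Theorem~\ref{thm:independence}, most importantly condition (iii). The bridge is the observation that for the indicator kernel $g=\mathbbm{1}_{\{x\}}$ with $x\in D_{[k]}$ one has $\avg(g,n)(y)=\bP[D[T_{k,n}](y)=x]=\density(x,y)$ for every $y\in D_{[n]}$, and that every bounded $g:D_{[k]}\to\bR$ is a finite linear combination of such indicators because $D_{[k]}$ is finite. Before starting I would record the elementary but crucial consistency identity
$$\density(x,z)=\sum_{y\in D_a}\density(x,y)\,\density(y,z)\qquad(x\in D_b,\ z\in D_c,\ |b|\le|a|\le|c|),$$
which follows from contravariance of $D$ together with the fact that the composition $T_{ac}\circ T_{ba}$ of two independent uniform random injections is again a uniform random injection $b\to c$ (both have uniformly distributed image and uniform induced bijection onto it). I would also note that $\density(x,y)$ is invariant under relabelling the ground set of $y$, which lets me freely replace any $x_n\in D_{a_n}$ by a copy in $D_{[m_n]}$ with $m_n=|a_n|$.

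For the forward implication, given a convergent sequence $\bbx=(x_n)$ I would set $\mu_b(\{x\}):=\lim_n\density(x,x_n)$. Since each $\density(\cdot,x_n)$ is a probability measure on the finite set $D_b$, the pointwise limit is again a probability measure, and uniqueness of $\mu$ is immediate because $\mu_b$ is prescribed on every singleton. Passing to the limit in the consistency identity with $z=x_n$ yields $\mu_b=p_{ba}\mu_a$ for all $|b|\le|a|$, where $p_{ba}(y)=\cL(D[T_{ba}](y))$; by the criterion recalled in the convex-decomposition discussion this means exactly $\mu\in\SYM(D)$. Let $X$ be the associated exchangeable $D$-measurement via Proposition~\ref{prop:exchangeablelaws}. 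Then the single sequence $x_n$ (relabelled into $D_{[m_n]}$) satisfies $\avg(g,m_n)(x_n)\to\bE[g(X_{[k]})]$ simultaneously for every bounded $g:D_{[k]}\to\bR$ and every $k$: this holds for indicators by construction and extends by linearity. Hence condition (iii) of Theorem~\ref{thm:independence} is met — one sequence works for every countable $\cG$ — so $X$ is ergodic and $\mu\in\eSYM(D)$.

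For the converse, I would take $\mu\in\eSYM(D)$ with corresponding ergodic $D$-measurement $X\sim\mu$ and apply the implication (i)$\Rightarrow$(iii) of Theorem~\ref{thm:independence} to the countable family $\cG=\bigcup_{k\ge0}\{\mathbbm{1}_{\{x\}}:x\in D_{[k]}\}$. This produces a deterministic sequence $x_n\in D_{[m_n]}$ with $m_n\to\infty$ and $\density(x,x_n)=\avg(\mathbbm{1}_{\{x\}},m_n)(x_n)\to\bE[\mathbbm{1}_{\{x\}}(X_{[k]})]=\mu_{[k]}(\{x\})$ for every $k$ and every $x\in D_{[k]}$. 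For an arbitrary finite set $b$ with $|b|=k$ and $x\in D_b$, relabelling invariance of $\density$ together with exchangeability of $\mu$ identifies $\density(x,x_n)$ with $\density(x',x_n)$ and $\mu_b(\{x\})$ with $\mu_{[k]}(\{x'\})$ for the corresponding $x'\in D_{[k]}$, so $\density(x,x_n)\to\mu_b(\{x\})$ for all $b$. Thus $\bbx=(x_n)$ is convergent with limit $\mu$, establishing (\ref{eq:density}).

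I expect the main obstacle to be the bookkeeping around relabelling: $\avg$ and condition (iii) are phrased with the canonical sets $[k],[n]$, whereas $\density$ and the statement (\ref{eq:density}) range over arbitrary finite $b$. The delicate point will be to verify that $\density$ is genuinely relabelling-invariant and that this invariance is compatible with the exchangeability relation $\mu_a\circ D[\tau]^{-1}=\mu_b$, so that convergence proved on the skeleton $\{[k]\}_{k\ge0}$ transfers to all $b$. The only genuinely computational ingredient is the consistency identity, i.e.\ that composing independent uniform random injections yields a uniform random injection; everything else is a formal consequence of Theorem~\ref{thm:independence} and the finiteness of the spaces $D_a$.
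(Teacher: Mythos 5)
Your proof is correct and follows essentially the same route as the paper: both identify $\density(x,\cdot)$ with $\avg(1_{\{x\}},\cdot)$ on the canonical sets $[k]$, dispose of arbitrary finite $b$ by relabelling invariance of $\density$, and reduce everything to Theorem~\ref{thm:independence}, with the converse direction obtained exactly as you do via (i)$\Rightarrow$(iii) applied to the countable family of indicators. The only divergence is in the forward direction, where the paper re-runs the independence computation of (iii)$\Rightarrow$(ii) for the given sequence to verify property (ii) directly, whereas you verify condition (iii) itself (one sequence serving every countable $\cG$, by linearity over the finite spaces $D_{[k]}$) and then invoke the stated equivalence --- a mild streamlining; your Chapman--Kolmogorov identity for $\density$ also makes explicit the step the paper dismisses as ``easy to check,'' namely that the limit rule lies in $\SYM(D)$.
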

		
			\begin{remark}[$\SYM(D)$ is a Bauer simplex for combinatorial data structures]
				Let $\COMP$ be the category of continuous maps between compact metrizable topological spaces. Every finite discrete space is compact metrizable and every map between finite discrete spaces  continuous, thus combinatorial data structures can be seen as functors $D:\INJo\rightarrow\COMP$. In this case extensions to $\CINJo\supset\INJo$ always exists and can be seen as functors $D:\CINJo\rightarrow\COMP$; the derived group action $\bSi\times D_{\bN}\rightarrow D_{\bN}$ is a topological group action on compact metrizable space, which are studied in ergodic theory, see \cite{glasner2003ergodic}. For any compact metrizable space $\cS$ and topological group action $\bSi\times\cS\rightarrow\cS$ the space of invariant laws, denoted with $\sPs(\cS)$, has the structure of a Choquet simplex. Our discussion shows that $\sPs(D_{\bN})$ has a closed set of extremal points -- either check that the independence property is closed or argue that extremal points coincide with a Martin boundary -- in particular, $\sPs(D_{\bN})$ is a Bauer simplex. This is not obvious from general theory: with $\cS = \{0,1\}^{\bSi}$ and $\pi (x_{\sigma})_{\sigma\in\bSi} = (x_{\pi^{-1}\sigma})_{\sigma\in\bSi}$ it is known that $\sPs(\{0,1\}^{\bSi})$ is the Poulsen simplex by the fact that $\bSi$ is amenable and countable infinite and thus does not have Kazhdan's property (T), see Theorem~13.15 in \cite{glasner2003ergodic}.
			\end{remark}
			
		\section{Weak FRT}
		
		To recall, the indexing system $I = \square^*_{\neq}$ is defined by $I_b = b^*_{\neq}$, that is the set of all tuples $\bbi = (i_1,\dots,i_k)\in b^k$ with $i_j\neq i_{j'}, j\neq j'$, and for an injection $\tau:b\rightarrow a$ it is $I[\tau](\bbi)\in a^*_{\neq}$ defined by 
		$$I[\tau](\bbi) = \vec{\tau}(\bbi) = \big(\tau i_1,\dots,\tau i_k\big).$$
		For a finite set $a$ and index $\bbi=(i_1,\dots,i_k)\in a^*_{\neq} = I_a$ let $\tau_{\bbi,a}:[k]\rightarrow a, j\mapsto i_j$, which is injective.\\
		
		The following result characterizes natural transformations $\eta:D\rightarrow\ARRAY(\cX,\square^*_{\neq})$, where $D$ is an arbitrary Borel data structure and $\cX$ an arbitrary Borel space. It arises as a special case of Theorem~\ref{thm:nat-array} later, but because $\square^*_{\neq}$ is of great importance to the general theory and the proof is especially tractable in this case, it is presented here separately. Note that the components $\eta_a$ of a natural transformation $\eta:D\rightarrow\ARRAY(\cX,I)$ are measurable maps $\eta_a:D_a\rightarrow \cX^{I_a}$ and thus have \emph{inner} component functions $\eta_{a,\bbi}:D_a\rightarrow\cX, \bbi\in I_a$ such that $\eta_a(\cdot) = (\eta_{a,\bbi}(\cdot))_{\bbi\in I_a}$.
		
		\begin{proposition}\label{thm:nat-trans-into-end}
			There is a one-to-one correspondence between 
			\begin{itemize}
				\item Natural Transformations $\eta:D\rightarrow\ARRAY(\cX,\square^*_{\neq})$
				\item Sequences of measurable maps $(f_k)_{k\geq 0}$ with $f_k:D_{[k]}\rightarrow\cX$
			\end{itemize}
			given by 
			\begin{itemize}
				\item $\eta \mapsto (f_k)_{k\geq 0}$ with $f_k = \eta_{[k],(1,\dots,k)}$
				\item $(f_k)_{k\geq 0}\mapsto \eta$ with $\eta_{a}(\cdot) = \big(f_k\circ D[\tau_{\bbi, a}](\cdot)\big)_{\bbi=(i_1,\dots,i_k)\in a^*_{\neq}}$.
			\end{itemize}
		\end{proposition}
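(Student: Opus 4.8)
The plan is to verify that the two stated assignments are well-defined maps between the indicated sets and that they are mutually inverse. I would organize the argument in three steps: first, show that the construction $(f_k)_{k\geq 0}\mapsto\eta$ yields a genuine natural transformation; second, observe that the reverse assignment $\eta\mapsto(f_k)_{k\geq 0}$ is trivially well-defined; third, check that the two round-trips are identities. Throughout I write $E=\ARRAY(\cX,\square^*_{\neq})$, so that $E[\tau](x)=x\circ\vec{\tau}$, and I use that a natural transformation into $E$ is the same as a family of inner component functions $\eta_{a,\bbi}:D_a\rightarrow\cX$ indexed by $\bbi\in a^*_{\neq}$.

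For the first step, fix a sequence $(f_k)_{k\geq 0}$ and define $\eta_a(x)=(f_k\circ D[\tau_{\bbi,a}](x))_{\bbi\in a^*_{\neq}}$. Measurability of each inner component $\eta_{a,\bbi}=f_k\circ D[\tau_{\bbi,a}]$ is immediate as a composition of measurable maps, and since the product $\sigma$-field on $\cX^{a^*_{\neq}}$ is generated by the coordinate projections, $\eta_a$ is measurable. The heart of the matter is naturality: for $\tau:b\rightarrow a$ I must show $\eta_b\circ D[\tau]=E[\tau]\circ\eta_a$. Comparing the $\bbj$-components for $\bbj=(j_1,\dots,j_l)\in b^*_{\neq}$, the right-hand side reads $\eta_{a,\vec{\tau}(\bbj)}(x)=f_l\circ D[\tau_{\vec{\tau}(\bbj),a}](x)$, while the left-hand side reads $f_l\circ D[\tau_{\bbj,b}]\circ D[\tau](x)$, which by contravariant functoriality equals $f_l\circ D[\tau\circ\tau_{\bbj,b}](x)$. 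Both sides therefore agree once I establish the index identity $\tau\circ\tau_{\bbj,b}=\tau_{\vec{\tau}(\bbj),a}$ as injections $[l]\rightarrow a$, and this is a direct check since both maps send $m\mapsto\tau(j_m)$.

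The second step is immediate: given a natural transformation $\eta$, the maps $f_k=\eta_{[k],(1,\dots,k)}:D_{[k]}\rightarrow\cX$ are inner components, hence measurable, so the sequence is well-defined. For the third step I verify both round-trips. Starting from $(f_k)$, forming $\eta$ and reading off $\eta_{[k],(1,\dots,k)}$ recovers $f_k$, because $\tau_{(1,\dots,k),[k]}=\id_{[k]}$ and $D[\id_{[k]}]=\id_{D_{[k]}}$. Starting from $\eta$, I set $f_k=\eta_{[k],(1,\dots,k)}$ and must show the reconstructed component $\eta'_{a,\bbi}=f_k\circ D[\tau_{\bbi,a}]$ equals $\eta_{a,\bbi}$. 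Here I apply naturality of $\eta$ along the injection $\tau_{\bbi,a}:[k]\rightarrow a$ and inspect the $(1,\dots,k)$-component: the left edge of the naturality square gives $f_k\circ D[\tau_{\bbi,a}]=\eta'_{a,\bbi}$, while the right edge gives $\eta_a(x)$ evaluated at $\vec{\tau_{\bbi,a}}((1,\dots,k))=\bbi$, that is $\eta_{a,\bbi}(x)$. Hence $\eta'=\eta$, closing the argument.

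I expect the only genuine obstacle to be the bookkeeping of the index arithmetic, concentrated in the composition identity $\tau\circ\tau_{\bbj,b}=\tau_{\vec{\tau}(\bbj),a}$ and in the dual evaluation $\vec{\tau_{\bbi,a}}((1,\dots,k))=\bbi$. Once these two elementary facts about the maps $\tau_{\bbi,a}$ are isolated, the contravariant functoriality of $D$ carries the weight of the naturality computations, and everything else is routine.
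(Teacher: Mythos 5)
Your proposal is correct and follows essentially the same route as the paper's proof: both hinge on reducing naturality to the componentwise condition $\eta_{b,\bbj}\circ D[\tau]=\eta_{a,\vec{\tau}(\bbj)}$ and on the two index identities $\tau\circ\tau_{\bbj,b}=\tau_{\vec{\tau}(\bbj),a}$ and $\vec{\tau_{\bbi,a}}((1,\dots,k))=\bbi$. Your explicit verification of both round-trips is slightly more careful than the paper, which dispatches that step as obvious after establishing injectivity of $\eta\mapsto(f_k)_k$, but the substance is identical.
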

		\begin{proof}
			Let $E=\ARRAY(\cX,\square^*_{\neq})$. For every rule $\eta:D\rightarrow E$ that maps a finite set $a$ to a measurable map $\eta_a:D_a\rightarrow E_a = \cX^{I_a}$ consider the "inner" component functions $\eta_{a,\bbi}:D_a\rightarrow\cX$ which are measurable and satisfy $\mu_a(\cdot) = (\mu_{a,\bbi}(\cdot))_{\bbi\in I_a}$. It is easily checked that $\eta$ is a natural transformation iff the inner components satisfy for every injection $\tau:b\rightarrow a$ and index $\bbi\in I_b$
			$$\eta_{b,\bbi}\circ D[\tau] = \eta_{a,I[\tau]\bbi}.$$
			Suppose $\eta$ is a natural transformation and let $f_k = \eta_{[k],(1,\dots,k)}$. For every $\bbi = (i_1,\dots,i_k)\in a^*_{\neq}$ it holds $I[\tau_{\bbi,a}]((1,\dots,k)) = \bbi$ and hence
			$$f_k\circ D[\tau_{\bbi,a}] = \eta_{[k],(1,\dots,k)}\circ D[\tau_{\bbi,a}] = \eta_{a,\bbi},$$
			that is: $\eta$ is determined by $(f_k)_{k\geq 0}$, hence the construction $\eta \mapsto (f_k)_{k\geq 0}$ is injective.\\
			On the other hand, let $(f_k)_{k\geq 0}$ be an arbitrary sequence of measurable functions $f_k:D_{[k]}\rightarrow \cX$ and for $\bbi=(i_1,\dots,i_k)\in a^*_{\neq}$ define the inner component $\eta_{a,\bbi}:D_a\rightarrow \cX$ by $\eta_{a,\bbi} = f_k\circ D[\tau_{\bbi,a}]$. Let $\tau:b\rightarrow a$ be an injection. For $\bbi = (i_1,\dots,i_k)\in b^*_{\neq}$ it holds
			$$\tau\circ \tau_{\bbi,b} = \tau_{I[\tau]\bbi, a}$$
			and hence
			\begin{equation*}
				\eta_{b,\bbi}\circ D[\tau] = f_k\circ D[\tau_{\bbi,b}]\circ D[\tau] = f_k\circ D[\tau\circ \tau_{\bbi,b}] = f_k\circ D[\tau_{I[\tau]\bbi, a}] = \eta_{a,I[\tau]\bbi},
			\end{equation*}
			so the construction $(f_k)_k\mapsto \eta$ defines a natural transformation. It is obvious that the constructions $\eta\mapsto (f_k)_k$ and $(f_k)_k\mapsto \eta$ are inverse to each other.  
		\end{proof}
	
		A first application of Proposition~\ref{thm:nat-trans-into-end} is in proving that Theorem~A has an equivalent formulation using natural transformations.
	
		\begin{proof}[Proof of Theorem~\ref{thm:equivalence}]
			Let $D = \ARRAY(\cX,\square^*_{\neq})$. The following are shown to be equivalent:
			\begin{itemize}
				\item[(i)] Theorem~A
				\item[(ii)] For every $\mu \in \SYM(D)$ exists a natural transformation $\eta:R\rightarrow D$ with $\mu = \ur\circ\eta^{-1}$. 
			\end{itemize}
			\underline{(i)$\Rightarrow$(ii)}. Let $\mu\in\SYM(D)$. By Kolmogorov consistency there exists a $\cX$-valued stochastic process $X = (X_{\bbi})_{\bbi\in \bN^*_{\neq}}$ such that for every finite set $a\in\finN$ it is $(X_{\bbi})_{\bbi\in a^*_{\neq}} \sim \mu_a$. Let $U_a, a\in\finN$ be iid $\sim\unif[0,1]$ and for every $a$ let $Y_a = (U_{a'})_{a'\in 2^a}$. By Theorem~A there is a measurable function $f:\cup_k[0,1]^{2^{[k]}}\rightarrow\cX$ such that $X \ed \big(f\big((U_{\pi_{\bbi}(e)})_{e\in 2^{[k]}}\big)\big)_{\bbi=(i_1,\dots,i_k)\in\bN^*_{\neq}}$, where for $\bbi = (i_1,\dots,i_k)$ it is $\pi_{\bbi}:[k]\rightarrow\{i_1,\dots,i_k\}, j\mapsto i_j$ bijective and it holds
			$$R[\tau_{\bbi,a}](Y_a) = (U_{\pi_{\bbi}(e)})_{e\in 2^{[k]}}.$$
			For every $k\geq 0$ let $f_k:[0,1]^{2^{[k]}}\rightarrow\cX$ be the restriction of $f$ to $[0,1]^{2^{[k]}}$. The functions $(f_k)_k$ give a natural transformation $\eta:R\rightarrow D$ by the construction in Proposition~\ref{thm:nat-trans-into-end}. For every finite subset $a\in \finN$ it then holds that 
			\begin{align*}
				\mu_a \sim (X_{\bbi})_{\bbi\in a^*_{\neq}} &\ed \big(f\big((U_{\pi_{\bbi}(e)})_{e\in 2^{[k]}}\big)\big)_{\bbi=(i_1,\dots,i_k)\in a^*_{\neq}}\\
					  &= \big(f_k\circ R[\tau_{\bbi, a}](Y_a)\big)_{\bbi=(i_1,\dots,i_k)\in a^*_{\neq}} = \eta_a(Y_a) \sim \ur_a\circ\eta_a^{-1},
			\end{align*}
			that is $\mu = \ur\circ\eta^{-1}$.\\
			\underline{(ii)$\Rightarrow$(i)}. Let $X = (X_{\bbi})_{\bbi\in\bN^*_{\neq}}$ be exchangeable $\cX$-valued. For any injection $\tau:a\rightarrow\bN$ define $\tilde\tau:a^*_{\neq}\rightarrow \bN^*_{\neq}, (i_1,\dots,i_k)\mapsto (\tau i_1,\dots,\tau i_k)$. The law of $X\circ \tilde\tau$ does not depend on a concrete choice of $\tau$ and hence allows to define
			$$\mu_a = \cL\big(X\circ \tilde\tau\big) \in \sP(\cX^{a^*_{\neq}}) = \sP(D_a),$$
			independent on the choice of $\tau$. It is easy to check that this defines an exchangeable law $\mu\in \SYM(D)$. By (ii) there is a natural transformation $\eta:R\rightarrow D$ such that $\mu_a = \ur_a\circ\eta^{-1}_a$ for every finite $a$. By Proposition~\ref{thm:nat-trans-into-end} there is a sequence of measurable functions $(f_k)_{k\geq 0}$ representing $\eta$ which glued together yield $f:\cup_k[0,1]^{2^{[k]}}\rightarrow\cX$. Let $U_a, a\in\finN$ be iid $\sim\unif[0,1]$ and $Y_a = (U_{a'})_{a'\subseteq a}$. It is $Y_a\sim\ur_a$ and hence 
			\begin{align*}
				(X_{\bbi})_{\bbi\in a^*_{\neq}} \sim \mu_a = \ur_a\circ\eta_a^{-1}  \ed \eta_a(Y_a) = \big(f\big((U_{\pi_{\bbi}(e)})_{e\in 2^{[k]}}\big)\big)_{\bbi=(i_1,\dots,i_k)\in a^*_{\neq}}.
			\end{align*}
	        Note that for every $b\subseteq a$ by naturality $D[\iota_{b,a}](\eta_a(Y_a)) = \eta_b(R[\iota_{b,a}](Y_a)) = \eta_b(Y_b)$ and that $\bN^*_{\neq} = \cup_{n\geq 0}[n]^*_{\neq}$. By Kolmogorov consistency the distributional equations $(X_{\bbi})_{\bbi\in a^*_{\neq}}\ed \eta_a(Y_a)$ holding for every finite set $a\subseteq \bN$ can thus be lifted to the whole process:
	        $$(X_{\bbi})_{\bbi\in \bN^*_{\neq}}\ed \big(f\big((U_{\pi_{\bbi}(e)})_{e\in 2^{[k]}}\big)\big)_{\bbi=(i_1,\dots,i_k)\in \bN^*_{\neq}},$$
	        giving (i).
		\end{proof}
		
		Theorems~A~+~\ref{thm:equivalence} give:
		
		\begin{corollary}\label{thm:frt-kallenberg}
			Let $D = \ARRAY(\cX,\square^*_{\neq})$. For every $\mu \in \SYM(D)$ exists a natural transformation $\eta:R\rightarrow D$ with $\mu = \ur\circ\eta^{-1}$. 
		\end{corollary}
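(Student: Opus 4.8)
The plan is to treat this Corollary as an immediate logical consequence of the two results named in its preamble, so that no genuinely new argument is required. The statement to be proved is, up to the renaming $D = \ARRAY(\cX,\square^*_{\neq})$, \emph{verbatim} condition (ii) of Theorem~\ref{thm:equivalence}. Thus the entire task reduces to supplying the premise of that equivalence.

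First I would record that Theorem~A holds as a standalone fact: it is the classical functional representation for $\cX$-valued exchangeable arrays indexed by $\bN^*_{\neq}$, due to Hoover and presented as Theorem~7.21 in Kallenberg's monograph. It enters this paper as an imported ingredient and is not re-derived here. Next I would invoke Theorem~\ref{thm:equivalence}, which asserts the equivalence of its statement (i) (namely Theorem~A) with statement (ii), the existence, for every $\mu\in\SYM(\ARRAY(\cX,\square^*_{\neq}))$, of a natural transformation $\eta:R\rightarrow\ARRAY(\cX,\square^*_{\neq})$ satisfying $\mu = \ur\circ\eta^{-1}$.

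The single key step is then a modus ponens: Theorem~A is true, and (i)$\Leftrightarrow$(ii), so (ii) is true as well; since (ii) coincides with the Corollary's claim, we are done. Consequently there is no real obstacle at this stage — all the substance was already spent in establishing Theorem~\ref{thm:equivalence}, whose forward direction used Proposition~\ref{thm:nat-trans-into-end} to convert the glueing function $f$ from Theorem~A into the family $(f_k)_{k\geq 0}$ defining $\eta$, and whose backward direction relied on Kolmogorov consistency. The only remaining thing to check is the trivial bookkeeping that the data structure $D$ in the Corollary is indeed $\ARRAY(\cX,\square^*_{\neq})$, so that condition (ii) and the Corollary are literally the same assertion.
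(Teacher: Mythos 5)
Your proposal is correct and is exactly the paper's argument: the corollary is introduced there with the single line ``Theorems~A~+~Theorem~\ref{thm:equivalence} give:'', i.e.\ Theorem~A is taken as an imported fact and the equivalence (i)$\Leftrightarrow$(ii) of Theorem~\ref{thm:equivalence} yields statement (ii), which is the corollary verbatim. Nothing is missing.
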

	
		A second application of Proposition~\ref{thm:nat-trans-into-end} is:
	
		\begin{theorem}\label{thm:embedding}
			Every Borel data structure $D$ can be naturally embedded in $\ARRAY([0,1],\square^*_{\neq})$. 
		\end{theorem}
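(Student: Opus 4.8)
The plan is to invoke Proposition~\ref{thm:nat-trans-into-end} with $\cX = [0,1]$, which reduces the construction of a natural transformation $\eta:D\rightarrow\ARRAY([0,1],\square^*_{\neq})$ to merely specifying a sequence of measurable maps $(f_k)_{k\geq 0}$ with $f_k:D_{[k]}\rightarrow[0,1]$; the resulting $\eta$ has inner components $\eta_{a,\bbi} = f_k\circ D[\tau_{\bbi,a}]$ for $\bbi=(i_1,\dots,i_k)\in a^*_{\neq}$. The remaining task is to choose the $f_k$ so that each $\eta_a$ is injective, and the key observation is that a \emph{single} cleverly chosen coordinate already recovers $x$. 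Concretely, for a finite set $a$ with $n=|a|$, pick any enumeration $\bbi=(i_1,\dots,i_n)\in a^*_{\neq}$ listing all of $a$; then $\tau_{\bbi,a}:[n]\rightarrow a$ is a bijection, and by the functor axioms $D[\id_a]=\id_{D_a}$ and $D[\sigma\circ\tau]=D[\tau]\circ D[\sigma]$ the map $D[\tau_{\bbi,a}]:D_a\rightarrow D_{[n]}$ is a bijection (its inverse is $D[\tau_{\bbi,a}^{-1}]$). Hence $x\mapsto f_n\bigl(D[\tau_{\bbi,a}](x)\bigr)$ is injective as soon as $f_n$ is.

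First I would use the standing Borel space hypothesis: each $D_{[k]}$ is a Borel space, so it admits a bi-measurable bijection onto a measurable subset of $[0,1]$, and composing with the inclusion yields a measurable injection $f_k:D_{[k]}\rightarrow[0,1]$. This selection is indexed by $k\in\bN$, so only countable choice is involved and no appeal to the global axiom of choice (Remark~\ref{rmk:globalaxiomofchoice}) is required. Let $\eta$ be the natural transformation produced from $(f_k)_{k\geq 0}$ by Proposition~\ref{thm:nat-trans-into-end}; each $\eta_a$ is measurable since $a^*_{\neq}$ is finite and every inner component $f_{|\bbi|}\circ D[\tau_{\bbi,a}]$ is measurable, so $\eta_a$ maps measurably into the product $[0,1]^{a^*_{\neq}}$.

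Then injectivity is immediate from the key observation: for each $a$ (including $a=\emptyset$, where the empty tuple gives $\eta_{\emptyset,()}=f_0$), choosing the full enumeration $\bbi$ of $a$ shows that the $\bbi$-coordinate of $\eta_a$ is the injection $f_n\circ D[\tau_{\bbi,a}]:D_a\rightarrow[0,1]$, whence $\eta_a$ itself is injective. As this holds for every finite $a$, the natural transformation $\eta$ is an embedding in the sense of Definition~\ref{def:embedding}, proving the claim. I do not anticipate a genuine obstacle here: the only point needing care is verifying that $D[\tau_{\bbi,a}]$ is bijective whenever $\tau_{\bbi,a}$ is, and this follows directly from contravariant functoriality; the existence of the measurable injections $f_k$ and the measurability of the $\eta_a$ are routine consequences of the Borel space assumption.
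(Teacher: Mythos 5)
Your proposal is correct and follows essentially the same route as the paper's own proof: select measurable injections $f_k:D_{[k]}\rightarrow[0,1]$ via the Borel space assumption, build the natural transformation through Proposition~\ref{thm:nat-trans-into-end}, and observe that for a full-length tuple $\bbi$ the map $D[\tau_{\bbi,a}]$ is a bijection, so the corresponding single component $f_{|a|}\circ D[\tau_{\bbi,a}]$ is already injective. The additional remarks on countable choice and the $a=\emptyset$ case are fine but not needed beyond what the paper records.
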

		
		\begin{proof}
			A natural transformation $\phi:D\rightarrow\ARRAY([0,1],\square^*_{\neq})$ is constructed such that every component $\phi_a:D_a\rightarrow [0,1]^{a^*_{\neq}}$ is injective.\\
			For every $k\geq 0$ it is $D_{[k]}$ a Borel space, hence there exists a measurable injection 
			$$f_k:D_{[k]}\rightarrow [0,1].$$ 
			By Proposition~\ref{thm:nat-trans-into-end} the rule $\phi:D\rightarrow\ARRAY([0,1],\square^*_{\neq})$ having components $\phi_a = (\phi_{a,\bbi})_{\bbi\in a^*_{\neq}}$ with $\phi_{a,\bbi} = f_k\circ D[\tau_{\bbi,a}], \bbi = (i_1,\dots,i_k)\in a^*_{\neq}$ is a natural transformation. For every $a$ a tuple $\bbi = (i_1,\dots,i_k)\in a^*_{\neq}$ having maximal length $k=|a|$ is an enumeration of all elements of $a$ and hence $\tau_{\bbi,a}:[k]\rightarrow a, j\mapsto i_j$ is a bijection, so $D[\tau_{\bbi,a}]$ is a bijection and hence $\phi_{a,\bbi}$ is an injection (as a composition of injection $f_k$ and bijection $D[\tau_{\bbi,a}]$). Now $\phi_a = (\phi_{a,\bbi})_{\bbi\in a^*_{\neq}}$ is injective as already some of its component functions are.
		\end{proof}
		
		\begin{remark}
			In case $\depth(D) = k <\infty$ one can construct an embedding $\phi:D\rightarrow \ARRAY([0,1],\square^k_{\neq})$ in an analog way, thus $D$ is naturally isomorphic to a sub-data structure of $\ARRAY([0,1],\square^k_{\neq})$.
		\end{remark}
		
		The embedding constructed in the proof is highly redundant and unpractical for applications: every entry in the array $\phi_a(x)\in[0,1]^{a^*_{\neq}}$ that is indexed by a full-length tuple $\bbi\in a^*_{\neq}$, of which there are $|a|!$ many, contains all information about $x$ and thus also the information about all other entries. But this information can in general \textbf{not} be recovered using a true natural transformation defined on the whole of $\ARRAY([0,1],\square^*_{\neq})$; for some BDS $D$ there do not even exist a single true natural transformation $\ARRAY([0,1],\square^*_{\neq})\rightarrow D$. 
		
		\begin{lemma}\label{lemma:chaining}
			Let $D, E, F$ be Borel data structures, $\mu\in\SYM(D)$, $\eta:D\rightarrow E$ a $\mu$-a.s. natural transformation and $\phi:E\rightarrow F$ a rule that maps every finite set $a$ to a measurable function $\phi_a:E_a\rightarrow F_a$. Then the following are equivalent:
			\begin{itemize}
				\item[(i)] $\phi$ is a $\mu\circ\eta^{-1}$-a.s. natural transformation, 
				\item[(ii)] $\phi\circ\eta$ is a $\mu$-a.s. natural transformation.
			\end{itemize}
		\end{lemma}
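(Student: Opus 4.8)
The plan is to fix an injection $\tau:b\rightarrow a$ and reduce both conditions to a single measure-zero statement on $D_a$, exploiting the push-forward identity $(\mu_a\circ\eta_a^{-1})(M)=\mu_a(\eta_a^{-1}(M))$ valid for every measurable $M\subseteq E_a$. (Note that condition (i) is even well-posed: since $\eta$ is $\mu$-a.s. natural, $\mu\circ\eta^{-1}\in\SYM(E)$ by Proposition~\ref{prop:push_forward}, so speaking of a $\mu\circ\eta^{-1}$-a.s. natural transformation makes sense.) First I would record, for the fixed $\tau$, the three relevant \emph{defect sets}: the naturality defect of $\eta$,
$$N^\eta=\{x\in D_a:\eta_b(D[\tau](x))\neq E[\tau](\eta_a(x))\},$$
the naturality defect of $\phi$,
$$N^\phi=\{y\in E_a:\phi_b(E[\tau](y))\neq F[\tau](\phi_a(y))\},$$
and the naturality defect of the composite,
$$N^{\phi\eta}=\{x\in D_a:\phi_b(\eta_b(D[\tau](x)))\neq F[\tau](\phi_a(\eta_a(x)))\}.$$
Each is measurable because $E_b,F_b$ are Borel spaces, so their diagonals are measurable and a set on which two measurable maps into a Borel space disagree is measurable. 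Moreover $\mu_a(N^\eta)=0$ holds by hypothesis, $\eta$ being a $\mu$-a.s. natural transformation.

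The core observation is that \emph{on the complement of $N^\eta$ the sets $N^{\phi\eta}$ and $\eta_a^{-1}(N^\phi)$ coincide}. Indeed, if $x\notin N^\eta$ then $\eta_b(D[\tau](x))=E[\tau](\eta_a(x))$; substituting this equality into the defining inequality of $N^{\phi\eta}$ turns it verbatim into the inequality $\phi_b(E[\tau](\eta_a(x)))\neq F[\tau](\phi_a(\eta_a(x)))$, which says precisely that $\eta_a(x)\in N^\phi$, i.e. $x\in\eta_a^{-1}(N^\phi)$. Reading this equivalence in both directions yields the two inclusions
$$N^{\phi\eta}\subseteq\eta_a^{-1}(N^\phi)\cup N^\eta\qquad\text{and}\qquad\eta_a^{-1}(N^\phi)\subseteq N^{\phi\eta}\cup N^\eta.$$

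Finally I would combine these with $\mu_a(N^\eta)=0$. For (i)$\Rightarrow$(ii): condition (i) at $\tau$ says $(\mu_a\circ\eta_a^{-1})(N^\phi)=0$, i.e. $\mu_a(\eta_a^{-1}(N^\phi))=0$, and the first inclusion then forces $\mu_a(N^{\phi\eta})=0$, which is (ii) at $\tau$. For (ii)$\Rightarrow$(i): condition (ii) at $\tau$ says $\mu_a(N^{\phi\eta})=0$, and the second inclusion gives $\mu_a(\eta_a^{-1}(N^\phi))=0$, i.e. $(\mu_a\circ\eta_a^{-1})(N^\phi)=0$, which is (i) at $\tau$. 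As $\tau$ was arbitrary, the equivalence follows. There is no genuine obstacle here: the only points requiring care are the bookkeeping of which measure each almost-sure statement refers to and the measurability of the defect sets, both handled by the Borel space assumption, while the $\mu$-a.s. naturality of $\eta$ is exactly what lets one pass freely between the three defects up to a $\mu_a$-null set.
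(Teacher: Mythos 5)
Your proof is correct and follows essentially the same route as the paper's: the paper chains almost-sure equalities of random variables $X_a\sim\mu_a$, $Y_a=\eta_a(X_a)$, using the $\mu$-a.s. naturality of $\eta$ to pass between $E[\tau]\circ\eta_a(X_a)$ and $\eta_b\circ D[\tau](X_a)$, which is precisely your observation that the composite defect set and $\eta_a^{-1}(N^\phi)$ agree off the $\mu_a$-null set $N^\eta$. The only difference is presentational (explicit defect sets versus random-variable notation), and your added remarks on measurability of the defect sets and well-posedness of (i) are correct.
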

		\begin{proof}
			Let $\tau:b\rightarrow a$ be an injection, $X_a\sim \mu_a$ and $Y_a = \eta_a(X_a)$, that is $Y_a\sim (\mu\circ\eta^{-1})_a = \mu_a\circ\eta^{-1}_a$.
			(i)$\Rightarrow$(ii): It is $F[\tau]\circ\phi_a\circ\eta_a(X_a) = F[\tau]\circ\phi_a(Y_a) \as \phi_b\circ E[\tau](Y_a)$ because $\phi$ is $\mu\circ\eta^{-1}$-a.s. natural transformation by assumption (i). It is $\phi_b\circ E[\tau](Y_a) = \phi_b\circ E[\tau]\circ \eta_a(X_a) \as \phi_b\circ \eta_b\circ D[\tau](X_a)$ because $\eta$ is $\mu$-a.s. natural transformation by assumption. Hence $F[\tau]\circ\phi_a\circ\eta_a(X_a) \as \phi_b\circ \eta_b\circ D[\tau](X_a)$, so (ii).\\
			(ii)$\Rightarrow$(i). It is $F[\tau]\circ \phi_a(Y_a) = F[\tau]\circ\phi_a\circ \eta_a(X_a) \as \phi_b\circ \eta_b\circ D[\tau](X_a)$ since $\phi\circ\eta$ is $\mu$-a.s. natural transformation by (ii). It is $\phi_b\circ \eta_b\circ D[\tau](X_a) \as \phi_b\circ E[\tau]\circ \eta_a(X_a) = \phi_b\circ E[\tau](Y_a)$ since $\eta$ is a $\mu$-a.s. natural transformation, hence (i). 
		\end{proof}
		
		\begin{proposition}\label{prop:inverse}
			Let $\phi:D\rightarrow E$ be an embedding. Then there exists a rule $\theta:E\rightarrow D$ that sends every finite set $a$ to measurable map $\theta_a:E_a\rightarrow D_a$ such that
			\begin{itemize}
				\item $\theta\circ \phi = \id_D$, that is $\theta_a\circ\phi_a = \id_{D_a}$ for every $a$,
				\item $\theta$ is a $\mu\circ\phi^{-1}$-a.s natural transformation for every $\mu\in\SYM(D)$.
			\end{itemize}
		\end{proposition}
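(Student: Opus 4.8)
The plan is to build the rule $\theta$ component by component and then upgrade its defining left-inverse property to almost sure naturality by a single application of Lemma~\ref{lemma:chaining}. First I would construct the components. Since $\phi$ is an embedding, each $\phi_a:D_a\rightarrow E_a$ is a measurable injection between Borel spaces, and (assuming, as throughout, that the relevant spaces are non-empty, so that left inverses exist) such a map admits a measurable left inverse; this is precisely the fact recalled in the paragraph preceding Theorem~\ref{thm:universality}, that in $\BOREL$ every monomorphism is a section. Invoking the global axiom of choice (Remark~\ref{rmk:globalaxiomofchoice}) I would select, simultaneously for every finite set $a$, a measurable map $\theta_a:E_a\rightarrow D_a$ with $\theta_a\circ\phi_a=\id_{D_a}$. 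This defines the rule $\theta:E\rightarrow D$ and yields the first bullet, $\theta\circ\phi=\id_D$, by construction; on the complement $E_a\setminus\phi_a(D_a)$ the values of $\theta_a$ are irrelevant and chosen arbitrarily.

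For the second bullet I would first note that the assertion is well-posed: for every $\mu\in\SYM(D)$ we have $\mu\circ\phi^{-1}\in\SYM(E)$ by Proposition~\ref{prop:push_forward}, since $\phi$, being a natural transformation, is in particular $\mu$-a.s.\ natural. I would then apply Lemma~\ref{lemma:chaining} with the Lemma's $\eta$ taken to be our $\phi$ (a genuine, hence $\mu$-a.s., natural transformation $D\rightarrow E$) and the Lemma's $\phi$ taken to be our rule $\theta:E\rightarrow D$, so that the Lemma's target $F$ is $D$. Its equivalence then reads: $\theta$ is a $\mu\circ\phi^{-1}$-a.s.\ natural transformation if and only if $\theta\circ\phi$ is a $\mu$-a.s.\ natural transformation. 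But $\theta\circ\phi=\id_D$ is a true natural transformation and therefore $\mu$-a.s.\ natural for every $\mu\in\SYM(D)$; thus condition (ii) of the Lemma holds, and (i) delivers exactly the second bullet.

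The only substantive ingredient here is the Borel left-inverse fact; the remaining steps are formal, so I do not expect a genuine obstacle, only the bookkeeping of matching variable names to Lemma~\ref{lemma:chaining}. What I would emphasize in the write-up is \emph{why} $\theta$ cannot in general be promoted to a true natural transformation: off the image $\phi_a(D_a)$ its values are unconstrained, and true naturality would impose compatibility relations there that need not hold. This is precisely the phenomenon noted after Proposition~\ref{prop:embedding-structure}, where $\hat\eta^{-1}:E'\rightarrow D$ fails to extend to all of $E$. The almost sure statement survives because $\mu\circ\phi^{-1}$ is concentrated on the images $\phi_a(D_a)$, where $\theta$ is pinned down by $\theta_a\circ\phi_a=\id_{D_a}$.
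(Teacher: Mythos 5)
Your proposal is correct and follows exactly the paper's own argument: measurable left inverses of the components exist by the Borel space assumption, global choice assembles them into a rule $\theta$ with $\theta\circ\phi=\id_D$, and Lemma~\ref{lemma:chaining} (applied with $\eta=\phi$ and the chained rule $\theta\circ\phi=\id_D$, which is genuinely natural) yields the $\mu\circ\phi^{-1}$-a.s.\ naturality of $\theta$. The closing remarks on why $\theta$ cannot in general be a true natural transformation are accurate and consistent with the discussion following Proposition~\ref{prop:embedding-structure}.
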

		\begin{proof}
			Every component $\phi_a:D_a\rightarrow E_a$ is a measurable injection between Borel spaces, hence has a measurable left-inverse. Applying the global axiom of choice gives a rule $\theta:E\rightarrow D$ that picks measurable left-inverses, so $\theta\circ\phi = \id_D$. Since both $\phi$ and $\theta\circ\phi = \id_D$ are natural transformations, they are also $\mu$-a.s. natural transformations for every $\mu\in\SYM(D)$. By Lemma~\ref{lemma:chaining} $\theta$ is a $\mu\circ\phi^{-1}$-a.s. natural transformation.
		\end{proof}
	
		Given the previous results it is now easy to prove the weak FRT (without depth):
	
		\begin{proof}[Proof of Theorem~\ref{thm:frt-weak}]
			It is shown that for every BDS $D$ and $\mu\in\SYM(D)$ there exists a $\ur$-a.s. natural transformation $\eta:R\rightarrow D$ with $\mu = \ur\circ\eta^{-1}$. 
			Let $E = \ARRAY([0,1],\square^*_{\neq})$ and $\phi:D\rightarrow E$ be an embedding, which exists due to Theorem~\ref{thm:embedding}. It is $\mu\circ\phi^{-1}\in\SYM(E)$ and by Corollary~\ref{thm:frt-kallenberg} there is a natural transformation $\psi:R\rightarrow E$ such that $\mu\circ\phi^{-1} = \ur\circ\psi^{-1}$.\\
			By Proposition~\ref{prop:inverse} there is a $\mu\circ\phi^{-1}$-a.s. natural transformation $\theta:E\rightarrow D$ such that $\theta\circ\phi = \id_D$. Let $\eta = \theta\circ\psi$. It is $\psi:R\rightarrow E$ a natural transformation and $\theta:E\rightarrow D$ a $\mu\circ\phi^{-1} = \ur\circ\psi^{-1}$-a.s. natural transformation. Applying Lemma~\ref{lemma:chaining} gives that $\eta$ is a $\ur$-a.s. natural transformation. Because $\theta\circ\phi = \id_D$ it holds that
			$$\eta = \eta\circ(\theta\circ\phi)^{-1} = \eta\circ\phi^{-1}\circ\theta^{-1} = \ur\circ\psi^{-1}\circ\theta^{-1} = \ur\circ (\theta\circ\psi)^{-1} = \ur\circ\eta^{-1},$$
			so $\eta$ gives the desired functional representation of $\mu$. 
		\end{proof}
	
		Versions of (weak) FRTs for finite depth can be obtained from unbounded depth case by the following; the proof is placed in the appendix, it is very technical.
	
		\begin{proposition}\label{prop:depth-restriction}
			Let $D$ be a Borel data structure with $k = \depth(D)<\infty$ and let 
			$$r:R\rightarrow \Rk$$ 
			be the rule that has components 
			$$r_a:R_a\rightarrow \Rk_a, u\mapsto u\circ \iota_{\binom{a}{\leq k},2^a}.$$
			Then the following holds:
			\begin{itemize}
				\item[(i)] $r$ is a natural transformation with $\urk = \ur\circ r^{-1}$.
				\item[(ii)] for every natural transformation $\eta:R\rightarrow D$ exists a natural transformation $\tilde\eta:\Rk\rightarrow D$ with $\eta = \tilde\eta\circ r$.
				\item[(iii)] for every $\ur$-a.s. natural transformation $\eta:R\rightarrow D$ exists a $\urk$-a.s. natural transformation $\tilde\eta:\Rk\rightarrow D$ with $\eta = \tilde\eta\circ r$ $\ur$-almost surely, that is for every $a$ it holds that $\eta_a(u) = \tilde\eta_a\circ r_a(u)$ for $\ur_a$-almost all $u\in R_a$. 
			\end{itemize}
		\end{proposition}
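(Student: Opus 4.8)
The plan is to handle the three parts in order, with (i) and (ii) essentially formal and (iii) carrying all the analytic weight.

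For (i), naturality of $r$ is a one-line check. For an injection $\tau:b\to a$, the inclusion $\binom{b}{\le k}\subseteq 2^b$ is carried by $\im(\tau)$ into $\binom{a}{\le k}\subseteq 2^a$ precisely because $\tau$ is injective and hence cardinality-preserving, so restricting-then-relabelling and relabelling-then-restricting agree: both $r_b\circ R[\tau]$ and $\Rk[\tau]\circ r_a$ send $u$ to the map $b'\mapsto u(\tau(b'))$ on $\binom{b}{\le k}$. The identity $\urk=\ur\circ r^{-1}$ is just the statement that the coordinate projection $[0,1]^{2^a}\to[0,1]^{\binom{a}{\le k}}$ pushes $\unif[0,1]^{\otimes 2^a}$ forward to $\unif[0,1]^{\otimes\binom{a}{\le k}}$, i.e.\ a marginal of a product measure is the obvious product measure.

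For (ii) the key observation is that $k$-determinacy forces each $\eta_a$ to factor through $r_a$. Given $u,u'\in R_a$ with $r_a(u)=r_a(u')$ (they agree on all sets of size $\le k$), I would show $\eta_a(u)=\eta_a(u')$: for every $a'\subseteq a$ with $|a'|\le k$, naturality gives $D[\iota_{a',a}]\circ\eta_a=\eta_{a'}\circ R[\iota_{a',a}]$, and $R[\iota_{a',a}](u)=u|_{2^{a'}}$ involves only coordinates indexed by subsets of $a'$, all of size $\le k$; hence $R[\iota_{a',a}](u)=R[\iota_{a',a}](u')$, the size-$\le k$ restrictions of $\eta_a(u)$ and $\eta_a(u')$ coincide for every such $a'$, and $\eta_a(u)=\eta_a(u')$ by Definition~\ref{def:depth}. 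Since $r_a$ is a surjective coordinate projection with an obvious measurable section $s_a$ (fill the missing, size-$>k$, coordinates by $0$), I set $\tilde\eta_a=\eta_a\circ s_a$; this is measurable and satisfies $\tilde\eta_a\circ r_a=\eta_a$ by the factorization just shown (as $r_a\circ s_a=\id$). Naturality of $\tilde\eta$ then transfers from that of $\eta$ and $r$: writing any $v\in\Rk_a$ as $v=r_a(u)$ and using (i) together with naturality of $\eta$, both $\tilde\eta_b(\Rk[\tau]v)$ and $D[\tau](\tilde\eta_a v)$ reduce to $\eta_b(R[\tau]u)$.

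Part (iii) is the technical heart, and the difficulty is exactly that the factorization argument of (ii) uses the naturality equations pointwise, whereas now they hold only off $\ur_a$-null sets. The plan is to decompose $R_a=[0,1]^{\binom{a}{\le k}}\times[0,1]^{2^a\setminus\binom{a}{\le k}}$, writing $u=(v,w)$ so that $r_a(u)=v$ and $\ur_a=\urk_a\otimes\lambda$, with $\lambda$ the uniform product measure on the "large" coordinates. For each of the finitely many $a'\subseteq a$ with $|a'|\le k$, the map $R[\iota_{a',a}]$ depends on $v$ alone, so $\ur$-a.s.\ naturality of $\eta$ gives, for $\ur_a$-a.a.\ $(v,w)$, that $D[\iota_{a',a}](\eta_a(v,w))=\Phi_{a'}(v)$ is a function of $v$ only. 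By Fubini and intersecting the finitely many conull sets over $a'$, for $\urk_a$-a.a.\ $v$ the whole family $D[\iota_{a',a}](\eta_a(v,w))$, ranging over $|a'|\le k$, is $\lambda$-a.s.\ independent of $w$; Definition~\ref{def:depth} then upgrades this to $w\mapsto\eta_a(v,w)$ being $\lambda$-a.s.\ constant. I would define $\tilde\eta_a(v)$ to be this almost-sure value, extracting a measurable representative by the standard device of fixing a measurable injection $g:D_a\to[0,1]$, setting $G(v)=\int g(\eta_a(v,w))\,d\lambda(w)$ (measurable by Fubini, equal to $g(\tilde\eta_a(v))$ on the good set), and inverting $g$ on its Borel image, with $\tilde\eta_a$ set to a fixed point of $D_a$ off the good set. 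This yields $\eta_a=\tilde\eta_a\circ r_a$ $\ur_a$-a.s. Finally, $\urk$-a.s.\ naturality of $\tilde\eta$ follows by running the computation of (ii) almost surely: for $u\sim\ur_a$ put $v=r_a(u)\sim\urk_a$, and combine $\eta=\tilde\eta\circ r$ (a.s.), $\ur$-a.s.\ naturality of $\eta$, exact naturality of $r$, and the fact that $R[\tau]$ pushes $\ur_a$ to $\ur_b$ (so $\ur_b$-null exceptional sets at $b$ pull back to $\ur_a$-null sets), obtaining $\tilde\eta_b(\Rk[\tau]v)=D[\tau](\tilde\eta_a v)$ for $\urk_a$-a.a.\ $v$. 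I expect the main obstacle to be precisely this null-set bookkeeping: guaranteeing that the exceptional sets arising from the different $a'$ and the different $\tau$ can be assembled coherently, and that the a.s.-constant value is selected \emph{measurably} in $v$.
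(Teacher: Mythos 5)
Your proof is correct, but it is organized quite differently from the paper's. The paper routes both (ii) and (iii) through an auxiliary data structure $\tilde D_a = \prod_{a'\in\binom{a}{\leq k}}D_{a'}$ and the embedding $\phi_a(x) = (D[\iota_{a',a}](x))_{a'\in\binom{a}{\leq k}}$ (injective by $k$-determinacy): it defines an intermediate rule $\bar\eta_a(v) = \big(\eta_{a'}(v\circ\iota_{2^{a'},\binom{a}{\leq k}})\big)_{a'}$, which is \emph{manifestly} a measurable function of $v\in\Rk_a$, proves $\phi\circ\eta = \bar\eta\circ r$ (exactly in (ii), $\ur$-a.s.\ in (iii)), and then sets $\tilde\eta = \hat\phi^{-1}\circ\bar\eta$, resp.\ $\tilde\eta=\theta\circ\bar\eta$ with $\theta$ the almost-sure left-inverse from Proposition~\ref{prop:inverse}, finishing with Lemma~\ref{lemma:chaining}. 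You instead factor $\eta_a$ directly through the projection $r_a$: in (ii) via the pointwise argument that $r_a(u)=r_a(u')$ forces $\eta_a(u)=\eta_a(u')$ by naturality at inclusions plus $k$-determinacy, composed with a section of $r_a$; in (iii) via Fubini on $\ur_a=\urk_a\otimes\lambda$ and a measurable-selection device (a measurable injection $g:D_a\to[0,1]$, integration in $w$, inversion on the Borel image). Your (ii) is arguably more elementary than the paper's, needing no auxiliary structure. In (iii) the trade-off runs the other way: the paper's $\bar\eta$ sidesteps entirely the issue of selecting the $\lambda$-a.s.\ constant value of $w\mapsto\eta_a(v,w)$ measurably in $v$, which is exactly the bookkeeping you identify as the main obstacle; your device handles it, provided you define the good set as $\{v: G(v)\in g(D_a)\}$ (measurable since $g(D_a)$ is Borel) rather than as the set where $\eta_a(v,\cdot)$ is a.s.\ constant, whose measurability is less immediate. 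One last point to make explicit in your final step: the exceptional set you obtain is a priori $\ur_a$-null in $u$, and you should note that since both sides of the naturality identity depend on $u$ only through $v=r_a(u)$, Fubini converts this into a $\urk_a$-null set in $v$, as required.
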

	
		The weak FRT for bounded depth follows easily:
	
		\begin{proof}[Proof of Theorem~\ref{thm:frt-weak-depth}]
			Let $D$ have depth $k=\depth(D)<\infty$ and let $\mu\in\SYM(D)$. By Theorem~\ref{thm:frt-weak} there exists a $\ur$-a.s. natural transformation $\eta:R\rightarrow D$ with $\mu = \ur\circ\eta^{-1}$. Let $r:R\rightarrow\Rk$ be as in Proposition~\ref{prop:depth-restriction}, which gives the existence of a $\urk$-a.s. natural transformation $\tilde\eta:\Rk\rightarrow D$ with $\eta = \tilde\eta\circ r$ $\ur$-almost surely and such that $\urk = \ur\circ r^{-1}$. Combined: 
			$$\mu = \ur\circ\eta^{-1} = \ur\circ (\tilde\eta\circ r)^{-1} = \ur\circ r^{-1}\circ\tilde\eta^{-1} = \urk\circ\tilde\eta^{-1}.$$
		\end{proof}

		\section{Array-type data structures}\label{sec:arrays}
		
		Let $I:\INJ\rightarrow\INJ$ be an indexing system, see Definition~\ref{def:indexing-system}. 	
		
		\begin{definition}\label{def:indexing-system-constructions}
			Let $b$ be a finite set and $\bbi\in I_b$. Define
			\begin{itemize}
				\item $\dom(\bbi) = \bigcap_{b'\subseteq b, \bbi\in I_{b'}}b'$ (domain of $\bbi$ = IDs used to build $\bbi$),
				\item $|\bbi| = |\dom(\bbi)|$ the size of $\bbi$,
				\item $\stab(\bbi) = \{\pi~|~\pi:\dom(\bbi)\rightarrow\dom(\bbi)~\text{bijective with}~I[\pi](\bbi)=\bbi\}$,
				\item for any other index $\bbi'$ write $\bbi\sim\bbi'$ iff there exists an injection $\tau$ such that $I[\tau](\bbi) = \bbi'$.
			\end{itemize}	
		\end{definition}
	
		Using functorality of $I$ shows that $\stab(\bbi)$ is a finite group. The indexing system axioms give the following, a proof is given in the Appendix.
		
		\begin{lemma}\label{lemma:ind}
			Let $\bbi\in\Ib$ and $\tau:b\rightarrow a$ be injective.
			\begin{enumerate}
				\item $\dom(\bbi)$ does not depend on $b$,
				\item $I[\tau](\bbi) = I[\hat\tau](\bbi)$,
				\item $\dom(I[\tau](\bbi)) = \tau(\dom(\bbi))$,
				\item for two injections $\tau:\dom(\bbi)\rightarrow a, \sigma:\dom(\bbi)\rightarrow b$ it holds $I[\tau](\bbi) = I[\sigma](\bbi)$ if and only if there exists $\pi\in\stab(\bbi)$ with $\tau\circ\pi(i) = \sigma(i)$ for all $i\in\dom(\bbi)$,
				\item $\sim$ is an equivalence relation on indices. 
			\end{enumerate}
		\end{lemma}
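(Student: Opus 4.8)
The plan is to squeeze out of the three indexing-system axioms two structural facts, and then to obtain all five claims by essentially mechanical functoriality bookkeeping. The whole content sits in part (1): once $\dom(\bbi)$ is known to be intrinsic to the index $\bbi$, the remaining parts are manipulations of the identity $\tau = \iota_{\tau(b),a}\circ\hat\tau$. I would first record two preliminary consequences of the axioms that are used throughout. Applying axiom (3) with $b'=b$ gives $I[\id_b]=\id_{I_b}$, so $I$ preserves identities and, together with axiom (1), turns $I[\pi]$ into a bijection $I_b\to I_a$ with inverse $I[\pi^{-1}]$ whenever $\pi$ is a bijection; moreover axiom (3) says that for $b'\subseteq b$ the map $I[\iota_{b',b}]$ is the literal inclusion, so $I_{b'}\subseteq I_b$ and $I[\iota_{b',b}](\bbi)=\bbi$. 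The crucial second fact is that the collection $\mathcal{C}(\bbi)=\{b'\mid \bbi\in I_{b'}\}$ is closed under intersection: if $\bbi\in I_{b'}$ and $\bbi\in I_{b''}$ then axiom (2) gives $\bbi\in I_{b'}\cap I_{b''}=I_{b'\cap b''}$, so $b'\cap b''\in\mathcal{C}(\bbi)$.

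For part (1) I would show that $\mathcal{C}(\bbi)$ has a global minimum under inclusion, which is then manifestly independent of any ambient set. Fixing $b$ with $\bbi\in I_b$, the set $\dom(\bbi)$ computed inside $b$ is a finite intersection of members of $\mathcal{C}(\bbi)$, all contained in $b$, so by closure under intersection $\dom(\bbi)\in\mathcal{C}(\bbi)$. For any $b''\in\mathcal{C}(\bbi)$, intersecting with $\dom(\bbi)$ yields (again by axiom (2)) a member of $\mathcal{C}(\bbi)$ contained in $\dom(\bbi)$; minimality inside $b$ forces it to equal $\dom(\bbi)$, whence $\dom(\bbi)\subseteq b''$. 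Thus $\dom(\bbi)=\min\mathcal{C}(\bbi)$, independent of $b$. Part (2) is then just functoriality plus axiom (3): from $\tau=\iota_{\tau(b),a}\circ\hat\tau$ one gets $I[\tau]=I[\iota_{\tau(b),a}]\circ I[\hat\tau]$, and since $I[\iota_{\tau(b),a}]$ is the inclusion, evaluating at $\bbi$ gives $I[\tau](\bbi)=I[\hat\tau](\bbi)$.

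For part (3) I set $d=\dom(\bbi)$ and $\bbj=I[\tau](\bbi)$. Using $\bbi\in I_d$ from part (1), I write $\tau|_d=\tau\circ\iota_{d,b}$ and invoke part (2) to replace it by its corestriction $\rho:d\to\tau(d)$, obtaining $\bbj=I[\rho](\bbi)\in I_{\tau(d)}$ and hence $\dom(\bbj)\subseteq\tau(d)$. The reverse inclusion comes from applying the same argument to the bijection $\rho^{-1}:\tau(d)\to d$ and to $\bbi=I[\rho^{-1}](\bbj)$, which gives $d\subseteq\rho^{-1}(\dom(\bbj))$ and therefore $\tau(d)=\rho(d)\subseteq\dom(\bbj)$. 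Part (4) now splits cleanly: the direction ($\Leftarrow$) is immediate, since $\pi\in\stab(\bbi)$ yields $I[\tau\circ\pi](\bbi)=I[\tau](I[\pi](\bbi))=I[\tau](\bbi)$. For ($\Rightarrow$), equal indices have equal domains by part (1), so part (3) forces $\tau(d)=\sigma(d)=:e$; replacing $\tau,\sigma$ by their corestrictions $\hat\tau,\hat\sigma:d\to e$ via part (2), I take $\pi=\hat\tau^{-1}\circ\hat\sigma$ and verify directly that $I[\pi](\bbi)=I[\hat\tau^{-1}](I[\hat\sigma](\bbi))=I[\hat\tau^{-1}](I[\hat\tau](\bbi))=\bbi$ and that $\tau\circ\pi=\sigma$.

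Finally, for part (5), reflexivity is $I[\id](\bbi)=\bbi$. For symmetry and transitivity I would first observe, combining parts (2) and (3), that any relation $\bbi\sim\bbi'$ is realized by a bijection $\rho:\dom(\bbi)\to\dom(\bbi')$ with $I[\rho](\bbi)=\bbi'$; then $I[\rho^{-1}](\bbi')=\bbi$ gives symmetry, and composing two such domain-bijections together with functoriality gives transitivity. I expect part (1) to be the only real obstacle — specifically the realization that the correct object to analyze is the intersection-closed family $\mathcal{C}(\bbi)$ and that it possesses a unique minimal element; every subsequent part reduces to factoring an injection through its image and pushing it through $I$.
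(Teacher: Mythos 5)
Your proof is correct and follows essentially the same route as the paper's: closure of $\{b'\,:\,\bbi\in I_{b'}\}$ under intersection for (1), the factorization $\tau=\iota_{\tau(b),a}\circ\hat\tau$ for (2), and corestriction to domain-bijections for (3)--(5). The only cosmetic differences are that you phrase (1) as the existence of a minimum of the intersection-closed family, and in (3) you get the reverse inclusion $\tau(\dom(\bbi))\subseteq\dom(I[\tau](\bbi))$ by pushing forward along the bijection $\rho$ rather than by the paper's cardinality comparison; both are sound.
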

		
		\begin{exmp}
			Some examples for Definition~\ref{def:indexing-system-constructions}:
			\begin{itemize}
				\item $I=\square$: for $\bbi = i \in b$ it is $\dom(\bbi) = \{i\}$, $|\bbi|=1$, $\stab(\bbi) = \{\id_{\{i\}}\}$. All indices are equivalent.
				\item $I = \binom{\square}{k}$: for $\bbi = \{i_1,\dots,i_k\}\in\binom{b}{k}$ it is $\dom(\bbi)=\bbi$, $|\bbi|=k$, $\stab(\bbi) = \{\pi|\pi:\bbi\rightarrow\bbi~\text{bijective}\}$. All indices are equivalent.
				\item $I = 2^{\square}$: for $\bbi\in 2^b$ let $k\geq 0$ be such that $\bbi\in \binom{b}{k}$. For this index everything is as in the previous example. Two indices in $2^{\square}$ are equivalent iff they have the same size.
				\item $I = \square^k_{\neq}$: for $\bbi = (i_1,\dots,i_k)\in b^k_{\neq}$ it is $\dom(\bbi) = \{i_1,\dots,i_k\}$, $|\bbi|=k$ and $\stab(\bbi) = \{\id_{\{\dom(\bbi)\}}\}$. All indices are equivalent. 
				\item $I = \square^*_{\neq}$: for $\bbi\in b^*_{\neq}$ let $k\geq 0$ be such that $\bbi\in b^k_{\neq}$. For this index everything is as in the previous example. Two indices in $\square^*_{\neq}$ are equivalent iff they have the same size.
				\item $I = \square^k$: for $\bbi = (i_1,\dots,i_k)\in b^k$ it is $\dom(\bbi) = \{i_1,\dots,i_k\}$ the \emph{set} of different entries and $|\bbi|$ the number of different entries.  $\stab(\bbi)$ has only one element, the identity on $\dom(\bbi)$. Every index $\bbi = (i_1,i_2,\dots,i_k)$ defines a set-partition of $[k]$ by declaring that $j,j'\in[k]$ fall in the same block iff $i_j=i_{j'}$. The indices $\bbi, \bbi'$ are equivalent iff they induce the same partition.
				\item $I = \PAIR^{(k)}$ defined by $I_b = b\sqcup\cdots\sqcup b = \{(l,i)|1\leq l\leq k, i\in b\}$. For $\bbi = (l,i)\in I_b$ it is $I[\tau](\bbi) = (l, \tau(i))$,  $\dom(\bbi) = \{i\}$, $|\bbi|=1$ and $\stab(\bbi)$ has one element (identity). Two indices $\bbi =(l,i), \bbi'=(l',i')$ are equivalent iff $l=l'$.
			\end{itemize}
		\end{exmp}

		More complex examples can emerge from composing indexing systems:
		
		\begin{theorem}\label{thm:everygroup}
			Let $I = 2^{\square}\circ \square^*_{\neq}$. For every finite group $G$ there is an index $\bbi$ in $I$ such that $\stab(\bbi)$ and $G$ are isomorphic as groups.
		\end{theorem}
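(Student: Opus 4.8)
The plan is to unwind the composite indexing system $I = 2^{\square}\circ\square^*_{\neq}$ into concrete combinatorial data and then reduce the statement to Frucht's classical theorem via an explicit encoding. First I would compute $I$ using the composition rule: $(2^{\square}\circ\square^*_{\neq})_a = (2^{\square})_{(\square^*_{\neq})_a} = 2^{(a^*_{\neq})}$, so an index $\bbi\in I_a$ is a \emph{set of tuples with pairwise distinct entries} over $a$, and for an injection $\tau:b\rightarrow a$ one has $I[\tau] = \im(\vec{\tau})$, that is $I[\tau](\bbi) = \{(\tau i_1,\dots,\tau i_k)\mid (i_1,\dots,i_k)\in\bbi\}$. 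Since $\vec{\tau}$ preserves tuple length, so does $I[\tau]$. From Definition~\ref{def:indexing-system-constructions} I would identify $\dom(\bbi)$ with the set $W$ of all entries occurring in the tuples of $\bbi$: indeed $\bbi\in I_{b'}=2^{(b')^*_{\neq}}$ holds iff $W\subseteq b'$, so the intersection defining $\dom(\bbi)$ equals $W$. Correspondingly $\stab(\bbi)$ is the group of bijections $\pi$ of $\dom(\bbi)$ fixing $\bbi$ setwise under the entrywise action, i.e. the automorphism group of $\bbi$ regarded as a relational structure.

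Next I would invoke Frucht's theorem: for every finite group $G$ there is a finite simple undirected graph $\Gamma=(V,E)$ with $\operatorname{Aut}(\Gamma)\cong G$. I would then encode $\Gamma$ as the index
$$\bbi = \big\{(v)\mid v\in V\big\}\cup\big\{(u,v),(v,u)\mid \{u,v\}\in E\big\}\subseteq V^*_{\neq},$$
so that $\bbi\in I_V$. The length-$1$ tuples force $\dom(\bbi)=V$ regardless of whether $\Gamma$ has isolated vertices (alternatively one may appeal to the strengthened Frucht theorem yielding a connected $\Gamma$ and omit the singletons), while the symmetric pair encoding records exactly the undirected edge relation.

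Finally I would verify $\stab(\bbi)=\operatorname{Aut}(\Gamma)$ as subgroups of $\operatorname{Sym}(V)$. A bijection $\pi:V\rightarrow V$ lies in $\stab(\bbi)$ iff $I[\pi](\bbi)=\bbi$; because $I[\pi]$ preserves tuple length this splits into a condition on the singletons, satisfied by every $\pi$ since $\{(\pi v)\mid v\in V\}=\{(v)\mid v\in V\}$, and a condition on the pairs, namely $\{u,v\}\in E\Leftrightarrow\{\pi u,\pi v\}\in E$, which is precisely the statement that $\pi$ is a graph automorphism. Hence $\stab(\bbi)=\operatorname{Aut}(\Gamma)\cong G$. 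I expect the only genuine content to lie in correctly translating the abstract $\dom$ and $\stab$ definitions into the graph-automorphism picture and in the harmless bookkeeping ensuring $\dom(\bbi)=V$; the group isomorphism itself is then immediate from Frucht's theorem, which supplies the real mathematical input. The main obstacle is thus not the construction but making sure the uniform entrywise action of $\pi$ genuinely forces the length-$1$ and length-$2$ parts to be preserved separately, so that no spurious automorphisms of $\bbi$ arise beyond those of $\Gamma$.
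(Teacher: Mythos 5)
Your proposal is correct, but it takes a genuinely different route from the paper. The paper's proof is a two-line application of Cayley's theorem: writing $G$ as a subgroup of $\bS_k$, it takes $\bbi = \{(\pi 1,\dots,\pi k)\mid \pi\in G\}\in 2^{[k]^*_{\neq}}$, i.e.\ the orbit of the identity tuple under $G$; then $I[\sigma](\bbi)=\bbi$ for a bijection $\sigma$ of $[k]$ iff $\sigma G = G$ iff $\sigma\in G$, so $\stab(\bbi)$ is literally \emph{equal} to $G$ as a permutation group, not merely isomorphic to it. You instead invoke Frucht's theorem and encode a graph $\Gamma$ with $\operatorname{Aut}(\Gamma)\cong G$ as a set of singleton and symmetrized pair tuples; your verification that $\dom(\bbi)=V$ (via the singletons), that $I[\pi]$ preserves tuple length so the two length classes are fixed separately, and that fixing the pair class setwise is exactly the automorphism condition, is all sound. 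The trade-off is that your argument imports a substantially deeper external theorem where the paper needs only the trivial embedding $G\hookrightarrow\bS_{|G|}$; on the other hand, your construction uses only tuples of length at most $2$, so it shows the slightly sharper fact that already the indexing system $2^{\square}\circ\square^{\leq 2}_{\neq}$ realizes every finite group as a stabilizer (up to isomorphism), whereas the paper's index lives in $[k]^k_{\neq}$ with $k$ the degree of a faithful permutation representation of $G$.
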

		\begin{proof}
			Wlog assume $G$ is a subgroup $G\subseteq\bS_k$. An index $\bbi\in I_b = 2^{b^*_{\neq}}$ is a set $\bbi = \{\bbi_1,\dots,\bbi_l\}$ with for each $1\leq j\leq l$ it is $\bbi_j = (i_{j1},\dots,i_{jk_j})\in b^{*}_{\neq}$ and for injection $\tau:b\rightarrow a$ it is
			$$I[\tau](\bbi) = \{(\tau i_{j1},\dots \tau i_{jk_j})~|~(i_{j1},\dots,i_{jk_j})\in \bbi\}.$$
			The index $\bbi = \{(\pi 1,\dots,\pi k)|\pi\in G\}$ has $\dom(\bbi) = [k]$ and $\stab(\bbi) = G$. 
		\end{proof}
	
		The following is very useful for characterizing natural transformations $\eta:D\rightarrow\ARRAY(\cX,I)$.
	
		\begin{definition}[Skeleton of an indexing system]
			Let $I$ be an indexing system. A \emph{skeleton} for $I$ is a triple $(\Irep, \rep, \pirep)$ in which
			\begin{itemize}
				\item $\Irep$ is a set of normalized representative indices, that is for every index $\bbi$ there is exactly one $\bbia\in\Irep$ with $\bbi\sim\bbia$ and for every $\bbia\in\Ir$ it is $\dom(\bbia) = [k]$ with $k=|\bbia|$,
				\item $\rep$ is the rule that maps every index $\bbi$ to the unique $r(\bbi)\in\Irep$ with $\bbi\sim r(\bbi)$,
				\item $\pirep$ is a rule that maps every index $\bbi$ with $k=|\bbi|$ to a bijection $\pii:[k]\rightarrow \dom(\bbi)$ satisfying $I[\pii](r(\bbi)) = \bbi$.
			\end{itemize}
		\end{definition}
	
		\begin{exmp}\label{exmp:skeleton}
			Consider two minimal examples, related to (E2) and (E3) from the introduction:
			\begin{itemize}
				\item $I=\square^2_{\neq}$ has a skeleton given by $\Irep = \{(1,2)\}$ and for $\bbi=(i_1,i_2)$ it is $r(\bbi) = (1,2)$ and $\pi_{\bbi}:\{1,2\}\rightarrow\{i_1,i_2\}, j\mapsto i_j$.
				\item $I=\binom{\square}{2}$ has $\Irep = \{\{1,2\}\}$ and for $\bbi = \{i_1,i_2\}$ (with $i_1\neq i_2$) it is $r(\bbi) = \{1,2\}$. Now a problem arises: $\pirep$ should be a rule that maps \emph{any} two-element set $\bbi = \{i_1,i_2\}$ to a bijection $\pi_{\bbi}:\{1,2\}\rightarrow\{i_1,i_2\}$ with $I[\pi_{\bbi}]([2])=\bbi$; but both bijections $[2]\rightarrow\{i_1,i_2\}$ have this property. To justify the existence of a rule $\pirep$ requires
				\begin{itemize}
					\item Global Axiom of Choice if IDs are arbitrary,
					\item (Usual) Axiom of Choice if IDs are elements only of some fixed but arbitrary uncountable set,
					\item Countable Axiom of Choice if IDs are elements only of some fixed but arbitrary countable set.
				\end{itemize} 
				A choice axiom is not needed when IDs are elements of some fixed but arbitrary set that comes equipped with a total order: in this case one can choose $\pi_{\bbi}:[2]\rightarrow\{i_1,i_2\}$ to be the strictly increasing function. This was done in the index arithmetic of Chapter~7 in \cite{kallenberg2006probabilistic}, where IDs are always from $\bN\subseteq\bZ$. We shortly see that the concrete choice of $\pirep$ does not really matter; but it is pleasant to have one available.
			\end{itemize}
		\end{exmp}
	
		As seen in the example, the following requires the global axiom of choice.
	
		\begin{proposition}[Existence of a skeleton]
			Every indexing system $I$ has a skeleton $(\Irep,\rep,\pirep)$. 
		\end{proposition}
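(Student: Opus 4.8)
The plan is to build the three components of the skeleton in order: first normalize the representatives, then pick a transversal to define $\Irep$ and $\rep$, and finally choose the bijections $\pirep$, which is the only place where the global axiom of choice is genuinely needed.

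First I would show that every $\sim$-class contains a \emph{normalized} index, i.e. one whose domain is $[k]$ with $k$ its size. Given any index $\bbi$ with $k = |\bbi|$, choose a bijection $\psi:\dom(\bbi)\to[k]$ and set $\bbia = I[\psi](\bbi)$. By Lemma~\ref{lemma:ind}(3) one has $\dom(\bbia) = \psi(\dom(\bbi)) = [k]$, so $\bbia$ is normalized, and $\bbi\sim\bbia$ by definition (with $\sim$ symmetric by Lemma~\ref{lemma:ind}(5)). Moreover $\bbi\sim\bbi'$ forces $|\bbi| = |\bbi'|$, again by Lemma~\ref{lemma:ind}(3) since injections preserve cardinality, so the size is a $\sim$-invariant and a normalized representative of the class of $\bbi$ genuinely has domain $[k]$ with $k = |\bbi|$.

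Next I would select $\Irep$. The collection $N = \bigcup_{k\geq 0}\{\bbia\in I_{[k]} : \dom(\bbia)=[k]\}$ of all normalized indices is a countable set, being a countable union of the finite sets $I_{[k]}$. The relation $\sim$ partitions $N$, and by the previous paragraph every $\sim$-class of the (proper) class of all indices meets $N$; distinct overall classes meet $N$ in disjoint nonempty subsets, so overall classes correspond bijectively to $\sim$-classes of $N$. Choosing exactly one element from each $\sim$-class inside the countable set $N$ yields $\Irep$ (needing at most countable choice, and none at all given a fixed enumeration of $N$). The rule $\rep$ is then forced and needs no further choice: $r(\bbi)$ is the unique member of $\Irep$ lying in the class of $\bbi$, which is well defined precisely because $\Irep$ is a transversal.

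Finally I would construct $\pirep$, which is where global choice is essential and is the main obstacle. For a fixed $\bbi$ with $k = |\bbi|$ let $B_{\bbi}$ be the set of bijections $\pi:[k]\to\dom(\bbi)$ with $I[\pi](r(\bbi)) = \bbi$. This set is nonempty: since $r(\bbi)\sim\bbi$ there is an injection $\tau$ with $I[\tau](r(\bbi)) = \bbi$, and restricting its source to $\dom(r(\bbi)) = [k]$ we may assume $\tau:[k]\to a$; then Lemma~\ref{lemma:ind}(3) gives $\tau([k]) = \dom(\bbi)$, so the range-restriction $\hat\tau:[k]\to\dom(\bbi)$ is a bijection satisfying $I[\hat\tau](r(\bbi)) = I[\tau](r(\bbi)) = \bbi$ by Lemma~\ref{lemma:ind}(2), whence $\hat\tau\in B_{\bbi}$. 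When $\stab(r(\bbi))$ is nontrivial the set $B_{\bbi}$ has more than one element (it is a $\stab(r(\bbi))$-coset by Lemma~\ref{lemma:ind}(4)), so there is no canonical selection; since $\pirep$ must be defined simultaneously on the proper class of \emph{all} indices, invoking the global axiom of choice produces one $\pi_{\bbi}\in B_{\bbi}$ for each $\bbi$ and completes the skeleton $(\Irep,\rep,\pirep)$. I would close by remarking, in line with Example~\ref{exmp:skeleton}, that weaker choice principles suffice when the IDs are drawn from a fixed (ordered) set.
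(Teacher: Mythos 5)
Your proposal is correct and follows essentially the same route as the paper: countable choice to pick the transversal $\Irep$ inside the countable set of normalized indices, and global choice for $\pirep$ after verifying the candidate sets $B_{\bbi}$ are non-empty. The only (harmless) divergence is that the paper additionally invokes global choice to produce a rule sending each index to \emph{some} equivalent normalized index before composing with the transversal's choice function, whereas you define $\rep$ directly via uniqueness of the representative in each class --- a point the paper itself concedes with its remark that $r$ is uniquely determined given $\Irep$.
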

		\begin{proof}
			It is $\sim$ an equivalence relation on indices. Let $\cT = \{\bbi|\dom(\bbi)=[k]~\text{for some}~k\geq 0\}$ which is a countable set. It is easy to see that every index $\bbi$ is equivalent to some index from $\cT$. Restricting $\sim$ to $\cT$ one can apply axiom of countable choice and obtain $\Ir$ together with a choice function $r':\cT\rightarrow\Ir$. Since for every index $\bbi$ it is $\{\bbi'\in \cT|\bbi'\sim\bbi\}$ a non-empty set, applying global choice gives a rule $r''$ that maps every index $\bbi$ to some index $r''(\bbi)\in\cT$ with $r''(\bbi)\sim\bbi$. The rule $r$ is defined as $r = r'\circ r''$. Obviously, $r$ is uniquely determined given $\Irep$.\\
			For every index $\bbi$ with $k=|\bbi|$ it is $\bbi\sim r(\bbi)\in\Irep$, hence $\dom(r(\bbi)) = [k]$ and by definition of $\sim$ it is
			$$\cA_{\bbi} = \{\pi:[k]\rightarrow\dom(\bbi)|~\text{$\pi$ is bijective with $I[\pi](r(\bbi))=\bbi$}\}$$
			a non-empty set. Applying global choice again gives the rule $\pirep$ which maps every index $\bbi$ to an element $\pi_{\bbi}\in \cA_{\bbi}$. 
		\end{proof}

		\begin{remark}
			For $D = \ARRAY(\cX,I)$ with $|\cX|\geq 2$ it is straightforward to check that $\depth(D) = \max\{|\bbia|~|~\bbia\in\Ir\}$, where $\Irep$ is an arbitrary choice of representative indices. 
		\end{remark}
	
		Let $D$ be a BDS. As before, for a rule $\eta:D\rightarrow\ARRAY(\cX,I)$ that maps every finite set $a$ to a measurable map $\eta_a:D_a\rightarrow \cX^{I_a}$ it is $\eta_{a,\bbi}:D_a\rightarrow\cX, \bbi\in I_a$ the $\bbi$-th component function of $\eta_a$, that is $\eta_a(\cdot) = (\eta_{a,\bbi}(\cdot))_{\bbi\in I_a}$.
		
		\begin{theorem}\label{thm:nat-array}
			Let $D$ be a BDS, $\cX$ a Borel space and $I$ an indexing system with skeleton $(\Irep,\rep,\pirep)$. A one-to-one correspondence between 
			\begin{enumerate}
				\item natural transformations $\eta:D\rightarrow\ARRAY(\cX,I)$ and
				\item sequences $(f_{\bbia})_{\bbia\in\Ir}$ such that for every $\bbia\in\Ir$ with $k=|\bbia|$ it is 
				$$f_{\bbia}:D_{\{1,\dots,k\}}\rightarrow \cX$$ 
				measurable with $f_{\bbia}=f_{\bbia}\circ D[\pi]$ for every $\pi\in\stab(\bbia)\subseteq\bS_{[k]}$
			\end{enumerate}
			is given by
			\begin{itemize}
				\item $\eta \mapsto (f_{\bbia})_{\bbia\in\Irep}$ with $f_{\bbia} = \eta_{[k], \bbia}, k=|\bbia|$,
				\item $(f_{\bbia})_{\bbia\in\Irep}\mapsto \eta$ with $\eta_{a,\bbi} = f_{r(\bbi)}\circ D[\pi_{\bbi}]\circ D[\iota_{\dom(\bbi),a}]$.
			\end{itemize}
			Further, the construction $(f_{\bbia})_{\bbia\in\Irep}\mapsto \eta$ does not depend on a concrete choice of $\pirep$. 
		\end{theorem}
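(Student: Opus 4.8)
The plan is to follow the proof of Proposition~\ref{thm:nat-trans-into-end}, replacing the explicit full-length tuples used there by the skeleton data $(\Irep,\rep,\pirep)$ so as to reduce every index to its representative. The first step is to record the componentwise form of naturality. Writing $\eta_a=(\eta_{a,\bbi})_{\bbi\in I_a}$ through its inner component functions $\eta_{a,\bbi}:D_a\to\cX$, and using $\ARRAY(\cX,I)[\tau](x)=x\circ I[\tau]$ to read off entries, one checks exactly as in Proposition~\ref{thm:nat-trans-into-end} that $\eta$ is a natural transformation if and only if
\[
\eta_{b,\bbi}\circ D[\tau]=\eta_{a,I[\tau]\bbi}\qquad(\ast)
\]
for every injection $\tau:b\to a$ and every $\bbi\in I_b$.

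For the forward map $\eta\mapsto(f_{\bbia})$ with $f_{\bbia}=\eta_{[k],\bbia}$, the symmetry $f_{\bbia}\circ D[\pi]=f_{\bbia}$ for $\pi\in\stab(\bbia)$ is immediate from $(\ast)$ taken with $\tau=\pi:[k]\to[k]$ and $\bbi=\bbia$, since $I[\pi]\bbia=\bbia$. That $\eta$ is determined by $(f_{\bbia})$ comes from the recovery formula: for $\bbi\in I_a$ with $k=|\bbi|$, applying $(\ast)$ to the inclusion $\iota_{\dom(\bbi),a}$ (which fixes $\bbi$ by the third axiom of Definition~\ref{def:indexing-system}) and to $\pi_{\bbi}:[k]\to\dom(\bbi)$ (which sends $r(\bbi)$ to $\bbi$), and chaining the two, yields precisely $\eta_{a,\bbi}=f_{r(\bbi)}\circ D[\pi_{\bbi}]\circ D[\iota_{\dom(\bbi),a}]$. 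Hence $\eta\mapsto(f_{\bbia})$ is injective and lands in the set described in (2).

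The main work is the converse: given $(f_{\bbia})_{\bbia\in\Irep}$ satisfying the symmetry constraints, the rule defined by $\eta_{a,\bbi}:=f_{r(\bbi)}\circ D[\pi_{\bbi}]\circ D[\iota_{\dom(\bbi),a}]$ must satisfy $(\ast)$. Fixing $\tau:b\to a$, $\bbi\in I_b$ and setting $\bbj=I[\tau]\bbi$, one has $\bbi\sim\bbj$, so they share a representative $\bbia=r(\bbi)=r(\bbj)$ with $k=|\bbia|=|\bbi|=|\bbj|$, and $\dom(\bbj)=\tau(\dom(\bbi))$ by Lemma~\ref{lemma:ind}(3). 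Collapsing the defining formula on each side of $(\ast)$ by contravariance of $D$, the left side equals $f_{\bbia}\circ D[\alpha]$ with $\alpha=\tau\circ\pi_{\bbi}:[k]\to a$ and the right side equals $f_{\bbia}\circ D[\beta]$ with $\beta=\iota_{\dom(\bbj),a}\circ\pi_{\bbj}:[k]\to a$. Functorality of $I$ together with the skeleton identities $I[\pi_{\bbi}]\bbia=\bbi$ and $I[\pi_{\bbj}]\bbia=\bbj$ gives $I[\alpha]\bbia=\bbj=I[\beta]\bbia$. The crux is now Lemma~\ref{lemma:ind}(4), applied to the index $\bbia$ whose domain is $[k]$: it produces $\sigma\in\stab(\bbia)$ with $\beta=\alpha\circ\sigma$, so that
\[
f_{\bbia}\circ D[\beta]=f_{\bbia}\circ D[\sigma]\circ D[\alpha]=f_{\bbia}\circ D[\alpha]
\]
by the symmetry hypothesis on $f_{\bbia}$; this is exactly $(\ast)$. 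I expect this arrangement — forcing the two injections $\alpha,\beta$ that represent $\bbi$ respectively $\bbj$ to differ by a stabilizer element, so that the symmetry of $f_{\bbia}$ is precisely what is consumed — to be the main obstacle, since it is where all three indexing-system axioms (via Lemma~\ref{lemma:ind}) enter simultaneously.

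It remains to see that the two constructions are mutually inverse and that the formula is independent of $\pirep$. One direction is the recovery formula of the second step. For the other, evaluating the reconstructed $\eta$ at $[k]$ and $\bbia\in\Irep$, one has $r(\bbia)=\bbia$, $\iota_{\dom(\bbia),[k]}=\id_{[k]}$, and $\pi_{\bbia}\in\stab(\bbia)$ (as $I[\pi_{\bbia}]\bbia=\bbia$), whence $\eta_{[k],\bbia}=f_{\bbia}\circ D[\pi_{\bbia}]=f_{\bbia}$. Finally, if $\pi_{\bbi},\pi'_{\bbi}$ are two admissible choices, both satisfy $I[\pi_{\bbi}]r(\bbi)=I[\pi'_{\bbi}]r(\bbi)=\bbi$, so Lemma~\ref{lemma:ind}(4) gives $\sigma\in\stab(r(\bbi))$ with $\pi'_{\bbi}=\pi_{\bbi}\circ\sigma$; the symmetry of $f_{r(\bbi)}$ then absorbs $\sigma$, leaving $\eta_{a,\bbi}$ unchanged.
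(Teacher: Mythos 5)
Your proposal is correct and follows essentially the same route as the paper's proof: the componentwise naturality criterion, reduction to representatives via the skeleton, and the verification that the two injections representing $\bbi$ and $I[\tau]\bbi$ differ by a stabilizer element absorbed by the symmetry of the kernel. The only cosmetic difference is that you obtain that stabilizer element from Lemma~\ref{lemma:ind}(4), while the paper writes it down explicitly as $(\pi_{\bbi'})^{-1}\circ\tau^*\circ\pi_{\bbi}$ — which is exactly what the proof of that lemma produces.
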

	
		\begin{proof}
			Let $E=\ARRAY(\cX,I)$. For a rule $\eta:D\rightarrow E$ that maps finite sets $a$ to measurable functions $\eta_a:D_a\rightarrow E_a = \cX^{I_a}$ let $\eta_{a,\bbi}:D_a\rightarrow \cX, \bbi\in I_a$ be the components of $\eta_a$. As already noted in the proof of Theorem~\ref{thm:nat-trans-into-end}, the following are equivalent:
			\begin{itemize}
				\item[(i)] $\eta_a$ are the components of a natural transformation $\eta:D\rightarrow E$,
				\item[(ii)] for every injection $\tau:b\rightarrow a$ and index $\bbi\in I_b$
				\begin{equation}\label{eq:key}
				\eta_{b,\bbi}\circ D[\tau] = \eta_{a, I[\tau](\bbi)}.
				\end{equation}
			\end{itemize}
		
			\underline{$\eta \mapsto f$}. It is shown that this construction gives a sequences of kernels as in (2). Let $\bbia\in\Irep$ with $k=|\bbia|$. The measureability of $f_{\bbia}$ is clear, as it is the inner component of the measurable function $\eta_{[k]}$. Applying (\ref{eq:key}) to $a = b = [k]$ and $\tau = \pi\in\stab(\bbia)$ gives
			\begin{equation*}
				f_{\bbia}\circ D[\pi] = \eta_{[k],\bbia}\circ D[\pi] = \eta_{[k], I[\pi](\bbia)} = \eta_{[k], \bbia} = f_{\bbia},
			\end{equation*}
			that is the construction gives sequences of kernels as in (2).

			\underline{$f \mapsto \eta$}. It is shown that this construction gives a natural transformation. Let $\eta_a = (\eta_{a,\bbia})_{\bbia\in I_a}$ with $\eta_{a,\bbi} = f_{r(\bbi)}\circ D[\pi_{\bbi}]\circ D[\iota_{\dom(\bbi),a}]$. Let $\tau:b\rightarrow a$ and $\bbi\in I_b$. Property (\ref{eq:key}) needs to be verified. Consider both sides of that equation by plugging in the definitions and write $\bbi' = I[\tau](\bbi)$ for short:
			\begin{align*}
				\eta_{b,\bbi}\circ D[\tau] &= f_{r(\bbi)}\circ D[\pi_{\bbi}]\circ D[\iota_{\dom(\bbi),b}]\circ D[\tau],\\
				\eta_{a,I[\tau](\bbi)} & = f_{r(\bbi')}\circ D[\pi_{\bbi'}]\circ D[\iota_{\dom(\bbi'),a}].
			\end{align*}
			Now $\bbi\sim\bbi'$ and hence $r(\bbi)=r(\bbi')=:\bbia\in\Irep$. Calculation on the first term give
			\begin{align*}
				\eta_{b,\bbi}\circ D[\tau] &= f_{\bbia}\circ D[\pi_{\bbi}]\circ D[\iota_{\dom(\bbi),b}]\circ D[\tau]\\
										   &= f_{\bbia}\circ D[\tau\circ\iota_{\dom(\bbi),b}\circ \pi_{\bbi}]\\
										   &= f_{\bbia}\circ D[\iota_{\tau(\dom(\bbi)),a}\circ \tau^* \circ \pi_{\bbi}],
			\end{align*}
			with bijection $\tau^*:\dom(\bbi)\rightarrow\tau(\dom(\bbi)), i\mapsto \tau(i)$. Now consider the second term. It holds $\dom(\bbi') = \tau(\dom(\bbi))$. Using the symmetry of $f_{\bbia}$, for every $\pi\in\stab(\bbia)$ it follows
			\begin{align*}
			\eta_{a,I[\tau](\bbi)} &= f_{\bbia}\circ D[\pi_{\bbi'}]\circ D[\iota_{\tau(\dom(\bbi)),a}]\\
								   &= f_{\bbia}\circ D[\pi]\circ D[\pi_{\bbi'}]\circ D[\iota_{\tau(\dom(\bbi)),a}]\\
								   &= f_{\bbia}\circ D[\iota_{\tau(\dom(\bbi)),a}\circ \pi_{\bbi'}\circ \pi].
			\end{align*}
			Comparing the final calculations for both sides show that equality, hence naturality of $\eta$, follows, if there exists $\pi\in\stab(\bbia)$ such that $\pi_{\bbi'}\circ \pi = \tau^* \circ \pi_{\bbi}$, which is simply given by $\pi := (\pi_{\bbi'})^{-1}\circ \tau^*\circ \pi_{\bbi}$, one can check $\pi\in\stab(\bbia)$ noticing $I[\tau^*](\bbi) = \bbi'$.\\
			
			\underline{$\eta\mapsto f\mapsto \eta'$ implies $\eta=\eta'$.} Let $f_{\bbia} = \eta_{[k],\bbia}$ with $k=|\bbia|$. Let $a$ be finite and $\bbi\in I_a$. It is $\eta'_{a,\bbi} = f_{r(\bbi)}\circ D[\pi_{\bbi}]\circ D[\iota_{\dom(\bbi),a}]$. Write $\bbi = I[\tau](r(\bbi))$ with $\tau = \iota_{\dom(\bbi),a}\circ \pi_{\bbi}$ and apply (\ref{eq:key}) to the inner components of $\eta$:
			\begin{equation*}
				\eta_{a,\bbi} = \eta_{a,I[\tau](r(\bbi))} = \eta_{[k],r(\bbi)}\circ D[\pi_{\bbi}]\circ D[\iota_{\dom(\bbi),a}] = \eta'_{a,\bbi}.
			\end{equation*}
			
			\underline{$f\mapsto \eta\mapsto f'$ implies $f=f'$.} Let $a$ be finite and $\bbi\in I_a$. It is $\eta_{a,\bbi} = f_{r(\bbi)}\circ D[\pi_{\bbi}]\circ D[\iota_{\dom(\bbi),a}]$. For $\bbia\in\Ir$ with $|\bbia|=k$ it is 
			$$f'_{\bbia} = \eta_{[k],\bbia} = f_{r(\bbia)}\circ D[\pi_{\bbia}]\circ D[\iota_{\dom(\bbia),[k]}].$$
			Now $r(\bbia) = \bbia$ and $\dom(\bbia) = [k]$, hence $\iota_{\dom(\bbia),[k]} = \id_{[k]}$, which gives
			$$f'_{\bbia} = f_{\bbia}\circ D[\pi_{\bbia}].$$
			Now it is $\pi_{\bbia}$ such that $\bbia = I[\pi_{\bbia}](r(\bbia)) = I[\pi_{\bbia}](\bbia)$, that is $\pi_{\bbia}\in\stab(\bbia)$. Since $f_{\bbia}$ is symmetric it follows that $f'_{\bbia}=f_{\bbia}$.\\
			The one-to-one correspondence is thus shown. Only thing left to do:\\
			
			\underline{The construction $f\mapsto\eta$ does not depend on a concrete choice of $\pirep$}. Let $(\Irep,\rep)$ be a fixed choice of representative indices and let $\pirep, \pirep'$ be two rules that map an index $\bbi$ to bijections $\pi_{\bbi}, \pi'_{\bbi}:[k]\rightarrow \dom(\bbi)$ such that $I[\pi_{\bbi}](r(\bbi)) = \bbi = I[\pi'_{\bbi}](r(\bbi))$. Applying (4) from Lemma~\ref{lemma:ind} gives a $\pi\in\stab(r(\bbi))$ with $\pi_{\bbi} = \pi'_{\bbi}\circ\pi$. Suppose $\eta$ is defined using $\pi_{\bullet}$ and $\eta'$ using $\pi'_{\bullet}$. For a finite set $a$ and index $\bbi\in I_a$ the invariance of the kernel functions give
			\begin{align*}
				\eta_{a,\bbi} &= f_{r(\bbi)}\circ D[\pi_{\bbi}]\circ D[\iota_{\dom(\bbi),a}]\\
							  &= f_{r(\bbi)}\circ D[\pi'_{\bbi}\circ \pi]\circ D[\iota_{\dom(\bbi),a}]\\
							  &= f_{r(\bbi)}\circ D[\pi]\circ D[\pi'_{\bbi}]\circ D[\iota_{\dom(\bbi),a}]\\
							  &= f_{r(\bbi)}\circ D[\pi'_{\bbi}]\circ D[\iota_{\dom(\bbi),a}]\\
							  &= \eta'_{a,\bbi}.
			\end{align*}
		\end{proof}
	
		\begin{exmp}
			Natural transformations $\eta:\SEQ(\cX)\rightarrow\GRAPH$, with $\GRAPH = \ARRAY(\{0,1\},\binom{\square}{2})$, are determined by symmetric measurable maps $f:\cX\times\cX\rightarrow\{0,1\}$ and the corresponding natural transformation has components $\eta_a:\cX^a\rightarrow \{0,1\}^{\binom{a}{2}}, \eta_a((x_i)_{i\in a}) = (f(x_i, x_{i'}))_{\{i,i'\}\in\binom{a}{2}}$. The previous theorem shows: it does not matter in which order $i$ and $i'$ are picked from $\{i,i'\}$ and plugged into $f$, because of symmetry. However, a concrete choice is made in that theorem via $\pirep$. 
		\end{exmp}

		\begin{exmp}[Local modification rules]\label{exmp:local_modification_rule}
			Following Definition~1.27 in \cite{austin2010testability} the concept of a \emph{local modification rule} is introduced: let $D$ be an arbitrary BDS and $e$ be a finite set representing "extra individuals from the outside". A new BDS $D^{(e)}$ is defined by $D^{(e)}_a = D_{a\sqcup e}$ and $D^{(e)}[\tau] = D[\tau\sqcup\id_e]$, where for $\tau:b\rightarrow a$ it is $\tau\sqcup\id_e:b\sqcup e\rightarrow a\sqcup e$ the injection that operates as $\tau$ on $b$ and as $\id_e$ on $e$. A local modification rule on $D$ (using $e$) is a natural transformation $\eta:D^{(e)}\rightarrow D$. In case $D=\ARRAY(\cX,I)$ Theorem~\ref{thm:nat-array} gives an explicit description of local modification rules using kernel functions. 
		\end{exmp}
		
		A characterization of natural transformations $\eta:E\rightarrow \prod^L_l\ARRAY(\cX^{(l)}, I^{(l)})$ can be obtained easily given prior results:
		
		\begin{proof}[Proof of Theorem~\ref{thm:nat-in-array}]
			For any countable collections of BDS $E, D^{(l)}, l\in L$ there is an obvious one-to-one correspondence between natural transformations $\eta:E\rightarrow D:=\prod_lD^{(l)}$ and sequences of natural transformations $(\eta^{(l)})_l$ with $\eta^{(l)}:E\rightarrow D^{(l)}$ a natural transformation for every $l$: for every such sequence it is $\eta_a(x):=(\eta^{(l)}_a(x))_{l\in L}$ the component of a n.t. $E\rightarrow D$ and this construction is one-to-one. Hence Theorem~\ref{thm:nat-in-array} directly follows from Theorem~\ref{thm:nat-array}.
		\end{proof}
	
		It remains to show the strong FRT for array-type data structure, Theorem~\ref{thm:frt-strong}, which is obtained from the weak version by \emph{modifying} almost sure natural transformations to true ones. 
		
		\begin{lemma}\label{lemma:mod}
			Let $\cX, \cY$ be Borel spaces, $f:\cX\rightarrow\cY$ a measurable map, $G$ a countable group, $G\times\cX\rightarrow\cX$ a measurable group action and $\nu\in\sP(\cX)$ a probability measure such that for every $\pi\in G$ it it $f(\pi x) = f(x)$ for $\nu$-almost all $x\in\cX$. Then there exits a $G$-invariant measurable function $\tilde f:\cX\rightarrow\cY$ such that $\tilde f(x) = f(x)$ for $\nu$-almost all $x$. 
		\end{lemma}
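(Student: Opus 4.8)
The plan is to build $\tilde f$ by declaring it equal to $f$ on the measurable, $G$-invariant set where $f$ is genuinely constant on $G$-orbits, and setting it to a fixed constant elsewhere. First I would dispose of the trivial case: if $\cX=\emptyset$ there is nothing to prove, so assume $\cX\neq\emptyset$, whence $\cY\neq\emptyset$ and we may fix a point $y_0\in\cY$. For each $\pi\in G$ set
$$A_\pi = \{x\in\cX~:~f(\pi x)=f(x)\}.$$
The map $x\mapsto(f(\pi x),f(x))\in\cY\times\cY$ is measurable, because $x\mapsto\pi x$ is measurable for fixed $\pi$ (the action $G\times\cX\to\cX$ is measurable and $G$ is discrete) and $f$ is measurable; since $\cY$ is a Borel space its diagonal $\{(y,y)\}\subseteq\cY\times\cY$ is a measurable subset, so $A_\pi$ is the measurable preimage of the diagonal. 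By hypothesis $\nu(A_\pi)=1$.

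Next I would set $B=\bigcap_{\pi\in G}A_\pi$. As $G$ is countable this is a measurable set with $\nu(B)=1$, and by construction $f(\pi x)=f(x)$ holds for all $\pi\in G$ and all $x\in B$ simultaneously. The key step is to verify that $B$ is $G$-invariant: if $x\in B$ and $\sigma\in G$, then for every $\pi\in G$ one has $f(\pi(\sigma x))=f((\pi\sigma)x)=f(x)=f(\sigma x)$, using the group-action identity $\pi(\sigma x)=(\pi\sigma)x$ together with $\pi\sigma,\sigma\in G$; hence $\sigma x\in B$. Thus both $B$ and $\cX\setminus B$ are $G$-invariant measurable sets.

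Finally I would define
$$\tilde f(x)=\begin{cases} f(x), & x\in B,\\ y_0, & x\notin B, \end{cases}$$
which is measurable. It is $G$-invariant: for $x\in B$ and any $\pi\in G$, the invariance of $B$ gives $\pi x\in B$, so $\tilde f(\pi x)=f(\pi x)=f(x)=\tilde f(x)$; and on the $G$-invariant set $\cX\setminus B$ the function $\tilde f$ is constant, hence invariant there too. Since $\tilde f=f$ on $B$ and $\nu(B)=1$, we obtain $\tilde f=f$ $\nu$-almost surely, as required.

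I expect the only genuinely delicate points to be the two measurability claims — that each $A_\pi$ is measurable (which rests on the Borel-space assumption on $\cY$, ensuring the diagonal is measurable) and that $x\mapsto\pi x$ is measurable for each fixed $\pi$ — together with the orbit-invariance of $B$, whose proof uses the group structure. I emphasize that no invariance of $\nu$ under the $G$-action is needed anywhere in the argument.
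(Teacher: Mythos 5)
Your proof is correct and follows essentially the same route as the paper's: intersect the countably many full-measure sets $A_\pi$ to get a $G$-invariant set of $\nu$-measure one, keep $f$ there, and send the complement to a constant. The extra details you supply (measurability of $A_\pi$ via the diagonal of $\cY\times\cY$, the explicit orbit-invariance computation, and the observation that no $G$-invariance of $\nu$ is needed) are all accurate and merely flesh out steps the paper leaves implicit.
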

		\begin{proof}
			For each $\pi$ the set $\{x\in\cX|f(\pi x)=f(x)\}\subseteq \cX$ is measurable with $\nu$-probability one. Since $G$ is countable the same is true for $\cX_0 = \{x\in\cX|f(\pi x)=f(x)~\text{for all}~\pi\in G\} = \cap_{\pi\in G}\{x\in\cX|f(\pi x)=f(x)\}$. In particular $\cX_0\neq\emptyset$. If $\cX_0=\cX$ choose $\tilde f = f$, otherwise choose $y_0\in\cY$ arbitrary and define
			$$\tilde f:\cX\rightarrow\cY,~~\hat f(x) = \begin{cases}
			f(x),&x\in\cX_0\\
			y_0,&x\in \cX\setminus \cX_0.
			\end{cases}$$
			$\tilde f$ is measurable which satisfies $\tilde f(x) = f(x)$ $\nu$-almost because $\nu(\cX_0)=1$. The $G$-invariance of $\tilde f$ follows because for every $\pi\in G$ the equivalence $x\in\cX_0\Leftrightarrow \pi x\in \cX_0$ holds.
		\end{proof}
		
		\begin{proposition}[Modification]\label{prop:modification}
			Let $E$ be a BDS, $\mu\in\SYM(E)$ and $D = \prod_{l}^L\ARRAY(\cX^{(l)},I^{(l)})$ a countable product of array-type data structures. Every $\mu$-a.s. natural transformation $\eta:E\rightarrow D$ has a modification to a true natural transformation $\tilde\eta:E\rightarrow D$ such that for every finite $a$ it holds that $\eta_a = \tilde\eta_a$ $\mu_a$-almost surely.
		\end{proposition}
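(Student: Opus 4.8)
The plan is to reduce to a single array-type factor and then to \emph{symmetrize} the kernel functions attached to $\eta$ via the skeleton description of Theorem~\ref{thm:nat-array}, correcting them on $\mu$-null sets with Lemma~\ref{lemma:mod}. First I would observe that, exactly as in the proof of Theorem~\ref{thm:nat-in-array}, a rule $\eta:E\to\prod_{l}^L\ARRAY(\cX^{(l)},I^{(l)})$ decomposes into its coordinate rules $\eta^{(l)}:E\to\ARRAY(\cX^{(l)},I^{(l)})$, and since $D[\tau]$ acts coordinatewise the $\mu$-a.s. naturality of $\eta$ is equivalent to the $\mu$-a.s. naturality of each $\eta^{(l)}$ (one implication is immediate on the common full-measure event, the other uses that a countable intersection of $\mu_a$-full events is $\mu_a$-full, $L$ being countable). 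If each $\eta^{(l)}$ admits a modification $\tilde\eta^{(l)}$ to a true natural transformation with $\eta^{(l)}_a=\tilde\eta^{(l)}_a$ $\mu_a$-a.s., then $\tilde\eta:=(\tilde\eta^{(l)})_l$ is a true natural transformation and, intersecting over $l\in L$, satisfies $\eta_a=\tilde\eta_a$ $\mu_a$-a.s. Thus it suffices to treat $D=\ARRAY(\cX,I)$.

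For the single-factor case, fix a skeleton $(\Irep,\rep,\pirep)$ and pass to the inner components $\eta_{a,\bbi}:E_a\to\cX$. As recorded in the proof of Theorem~\ref{thm:nat-array}, naturality is characterized by equation~(\ref{eq:key}), and $\mu$-a.s. naturality of $\eta$ means precisely that $\eta_{b,\bbi}\circ E[\tau]=\eta_{a,I[\tau](\bbi)}$ holds $\mu_a$-almost surely for every $\tau:b\to a$ and $\bbi\in I_b$. Set $f_{\bbia}:=\eta_{[k],\bbia}$ for $\bbia\in\Irep$, $k=|\bbia|$. Applying the a.s. version of~(\ref{eq:key}) with $b=a=[k]$ and $\tau=\pi\in\stab(\bbia)\subseteq\bS_{[k]}$, and using $I[\pi](\bbia)=\bbia$, gives $f_{\bbia}\circ E[\pi]=f_{\bbia}$ $\mu_{[k]}$-almost surely for each $\pi\in\stab(\bbia)$. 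Viewing the finite group $\stab(\bbia)$ as acting measurably on $E_{[k]}$ via the genuine left action $\pi\cdot x=E[\pi^{-1}](x)$, Lemma~\ref{lemma:mod} (with $\nu=\mu_{[k]}$) yields a genuinely $\stab(\bbia)$-invariant measurable $\tilde f_{\bbia}:E_{[k]}\to\cX$ with $\tilde f_{\bbia}=f_{\bbia}$ $\mu_{[k]}$-almost surely. By Theorem~\ref{thm:nat-array} the invariant family $(\tilde f_{\bbia})_{\bbia\in\Irep}$ assembles into a \emph{true} natural transformation $\tilde\eta:E\to\ARRAY(\cX,I)$, with inner components $\tilde\eta_{a,\bbi}=\tilde f_{r(\bbi)}\circ E[\pi_{\bbi}]\circ E[\iota_{\dom(\bbi),a}]$.

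It remains to verify $\eta_a=\tilde\eta_a$ $\mu_a$-almost surely, and this is the step I expect to carry the real content. Fix $a$ and $\bbi\in I_a$, put $k=|\bbi|$ and $\tau=\iota_{\dom(\bbi),a}\circ\pi_{\bbi}:[k]\to a$, so that $I[\tau](r(\bbi))=\bbi$. Applying the a.s. version of~(\ref{eq:key}) with this $\tau$ to the index $r(\bbi)\in I_{[k]}$ gives $\eta_{a,\bbi}=\eta_{[k],r(\bbi)}\circ E[\tau]=f_{r(\bbi)}\circ E[\pi_{\bbi}]\circ E[\iota_{\dom(\bbi),a}]$ $\mu_a$-almost surely, which is literally the formula for $\tilde\eta_{a,\bbi}$ but with $f_{r(\bbi)}$ in place of $\tilde f_{r(\bbi)}$. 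Since $f_{r(\bbi)}$ and $\tilde f_{r(\bbi)}$ differ only on a $\mu_{[k]}$-null set $N$, these two inner components disagree only on $E[\tau]^{-1}(N)$; the crucial point is that $\tau:[k]\to a$ is an injection, so exchangeability of $\mu\in\SYM(E)$ gives $\mu_a\circ E[\tau]^{-1}=\mu_{[k]}$ and hence $\mu_a(E[\tau]^{-1}(N))=\mu_{[k]}(N)=0$. Thus $\eta_{a,\bbi}=\tilde\eta_{a,\bbi}$ $\mu_a$-almost surely for each $\bbi\in I_a$; intersecting over the \emph{finite} set $I_a$ gives $\eta_a=\tilde\eta_a$ $\mu_a$-almost surely, which completes the single-factor case and, by the first paragraph, the proposition. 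The only genuine difficulty is this transfer of an almost-sure identity of kernels living on $E_{[k]}$ back to $E_a$ along $E[\tau]$, which hinges on exchangeability; everything else is the routine bookkeeping that finite and countable intersections of full-measure events remain full-measure.
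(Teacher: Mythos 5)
Your proposal is correct and follows essentially the same route as the paper's proof: reduction to a single array factor, extraction of the kernels $f_{\bbia}=\eta_{[k],\bbia}$, symmetrization via Lemma~\ref{lemma:mod}, reassembly through Theorem~\ref{thm:nat-array}, and transfer of the $\mu_{[k]}$-null exceptional set back to $E_a$ using $\mu_a\circ E[\tau]^{-1}=\mu_{[k]}$ (the paper phrases this as $E[\tau](X_a)\sim X_{[k]}$). Your explicit remark that the group action should be taken as $\pi\cdot x=E[\pi^{-1}](x)$ to fit the hypotheses of Lemma~\ref{lemma:mod} is a small but welcome clarification of a point the paper leaves implicit.
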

		\begin{proof}
			Let $D^{(l)} = \ARRAY(\cX^{(l)},I^{(l)})$. For every $a$ it is $\eta_a:E_a\rightarrow D_a = \prod_l D^{(l)}_a$, let $\eta^{(l)}_a:E_a\rightarrow D^{(l)}_a$ be the $l$-th component function of $\eta_a$. It is $\eta^{(l)}$ a $\mu$-a.s. natural transformation. If every $\eta^{(l)}$ can be modified to a true natural transformation $\tilde\eta^{(l)}:E\rightarrow D^{(l)}$ then, because countable intersections of events with probability one have probability one, the rule $a\mapsto\tilde \eta_a = (\tilde \eta^{(l)}_a)_l$ defines the components of the desired modification $\tilde\eta$ of $\eta$. Hence one can restrict to the case $L=\{1\}$: showing that every $\mu$-a.s. natural transformation $\eta:E\rightarrow D = \ARRAY(\cX,I)$ has a modification, where $\cX$ is an arbitrary Borel space an $I$ an arbitrary indexing system. Let $(\Irep, \rep, \pirep)$ be a skeleton of $I$. For $a$ and $\bbi\in I_a$ let $\eta_{a,\bbi}:E_a\rightarrow\cX$ be the $\bbi$-th component of $\eta_a$. Let $\tau:b\rightarrow a$ be injective and $X_a\sim\mu_a$. Since $\eta$ is a $\mu$-a.s. natural transformation it holds that $\eta_b\circ E[\tau](X_a) \as D[\tau]\circ \eta_a(X_a)$. This is an almost surely equality in $\cX^{I_b}$ and hence for every index $\bbi\in I_b$ it follows that
			\begin{equation}\label{eq:comp-nt-as}
			\eta_{b,\bbi}\circ E[\tau](X_a) = \Big(\eta_b\circ E[\tau](X_a)\Big)(\bbi) \as \Big(D[\tau]\circ \eta_a(X_a)\Big)(\bbi) = \Big(\eta_a(X_a)\circ I[\tau]\Big)(\bbi) = \eta_{a,I[\tau](\bbi)}(X_a).
			\end{equation}
			For $\bbia\in \Ir$ with $\dom(\bbia) = [k], k\geq 0$ define 
			$$f_{\bbia}:E_{[k]}\rightarrow \cX,~~~f_{\bbia} = \eta_{[k],\bbia}.$$
			For $\pi\in\stab(\bbia)$ applying (\ref{eq:comp-nt-as}) to $a=b=[k], \bbi=\bbia$ and $\tau=\pi\in\stab(\bbia)$ gives
			\begin{equation*}
			f_{\bbia}\circ E[\pi](X_{[k]}) = \eta_{[k],\bbia}\circ E[\tau](X_{[k]}) \as \eta_{[k],I[\tau](\bbi)}(X_{[k]}) = f_{\bbia}(X_{[k]}).
			\end{equation*}
			By Lemma~\ref{lemma:mod} one can modify $f_{\bbia}$ to a measurable function $\tilde f_{\bbia}:E_{[k]}\rightarrow \cX$ such that $\tilde f_{\bbia}\circ E[\pi] = \tilde f_{\bbia}$ for all $\pi\in\stab(\bbia)$ (pointwise) and $\tilde f_{\bbia}(X_{[k]}) \as f_{\bbia}(X_{[k]})$. By Theorem~\ref{thm:nat-array} one can use $(\tilde f_{\bbia})_{\bbia\in \Ir}$ to construct a true natural transformation $\tilde \eta:E\rightarrow D = \ARRAY(\cX,I)$ which has components $\tilde\eta_{a,\bbi} = \tilde f_{r(\bbi)}\circ E[\pi_{\bbi}]\circ E[\iota_{\dom(\bbi),a}]$. This gives a $\mu$-a.s. modification of $\eta$: for a finite set $a$ and $\bbi\in I_a$ let $\tau = \iota_{\dom(\bbi),a}\circ \pi_{\bbi}$, which is an injection $[k]\rightarrow a$ such that $\bbi = I[\tau](r(\bbi))$ and $\tilde\eta_{a,\bbi} = \tilde f_{r(\bbi)}\circ E[\tau]$. Noticing $E[\tau](X_a)\sim X_{[k]}$ gives the calculation
			\begin{align*}
			\eta_{a,\bbi}(X_a) = \eta_{a,I[\tau](r(\bbi))}(X_a) \as \eta_{[k],r(\bbi)}\circ E[\tau](X_a) \as \tilde f_{\bbia}\circ E[\tau](X_a) = \tilde\eta_{a,\bbi}(X_a)
			\end{align*}
			and hence $\eta_a(X_a) \as \tilde\eta_a(X_a)$ (finite intersection of events with probability one).
		\end{proof}
		
		\begin{proof}[Proof of Theorem~\ref{thm:frt-strong}]
			Let $D = \prod_{l}^L\ARRAY(\cX^{(l)},I^{(l)})$ and $\mu\in\SYM(D)$. The weak FRT, Theorem~\ref{thm:frt-weak}, shows that there exists a $\ur$-a.s. natural transformation $\eta:R\rightarrow D$ with $\mu = \ur\circ \eta^{-1}$. Proposition~\ref{prop:modification} gives that $\eta$ can be modified to a true natural transformation $\tilde\eta$ with $\eta_a(u) = \tilde\eta_a(u)$ for $\ur_a$-almost all $u \in [0,1]^{2^a}$, hence $\ur\circ\eta^{-1} = \ur\circ\tilde\eta^{-1}$. In case $k = \depth(D)<\infty$ applying Proposition~\ref{prop:depth-restriction} to the true natural transformation $\tilde\eta$ gives a true natural transformation $\hat\eta:\Rk\rightarrow D$ with $\tilde\eta = \hat\eta\circ r$ and hence $\mu = \ur\circ\tilde\eta^{-1} = \ur\circ(\hat\eta\circ r)^{-1} = \ur\circ r^{-1}\circ \hat\eta^{-1} = \urk\circ\hat\eta^{-1}$.
		\end{proof}
		
		\subsection{Explicit FRT for array-type data structures} Let $I$ be an indexing system with skeleton $(\Irep,\rep,\pirep)$. Define $I_{\bN} = \cup_{n\geq 0}I_{[n]}$ and the action $\bS_{\bN}\times I_{\bN}\rightarrow I_{\bN}$ as 
		$$(\pi, \bbi)\mapsto \pi\bbi := I[\tilde \pi](\bbi)~~~\text{with}~~~\tilde\pi:\dom(\bbi)\rightarrow \pi(\dom(\bbi)), i\mapsto \pi(i).$$
		This gives a notion of exchangeability in arrays as in (\ref{eq:array}). For every bijection $\pi:b\rightarrow a$ it is $\im(\pi):2^b\rightarrow 2^{a}, b'\subseteq b\mapsto \pi(b')\subseteq a$. The following is a consequence of Theorems~\ref{thm:frt-strong}, Theorem~\ref{thm:nat-array} and formulated in terms of natural extensions of arrays, see Section~\ref{sec:extension-arrays}.
		
		\begin{corollary}\label{cor:frt-array}
			Let $\cX$ be a Borel space. For every exchangeable $\cX$-valued process $X = (X_{\bbi})_{\bbi\in I_{\bN}}$ there exist kernel functions $(f_{\bbia})_{\bbia\in \Ir}$ such that for every $\bbia\in \Ir$ with $\dom(\bbia) = [k], k\geq 0$ it is
			\begin{itemize}
				\item $f_{\bbia}:[0,1]^{2^{[k]}}\rightarrow \cX$ measurable,
				\item $f_{\bbia}(u) = f_{\bbia}(u\circ \im(\pi))$ for every $u\in [0,1]^{2^{[k]}}$ and $\pi\in\stab(\bbia)\subseteq \bS_{[k]}$
			\end{itemize}
			and such that 
			\begin{equation*}
				\big(X_{\bbi}\big)_{\bbi\in I_{\bN}} \ed \Big(f_{r(\bbi)}\Big((U_{e})_{e\subseteq\dom(\bbi)}\circ \im(\pi_{\bbi})\Big)\Big)_{\bbi\in I_{\bN}},
			\end{equation*}
			with $U_a, a\in\finN$ iid $\sim\unif[0,1]$. The representation does not depend on the concrete choice of $\pirep$ by symmetry of the kernels. 
		\end{corollary}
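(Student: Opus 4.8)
The plan is to read the corollary as the explicit, process-level shadow of the strong FRT: combine Theorem~\ref{thm:frt-strong} (specialized to a single factor) with the kernel description of Theorem~\ref{thm:nat-array} (specialized to source $E = R = \ARRAY([0,1],2^{\square})$), and then translate everything into the natural-extension language of Section~\ref{sec:extension-arrays}. There is no deep new content; the work is in unwinding the two theorems and matching notation.

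First I would pass from the process to a law. The exchangeable $\cX$-valued process $X = (X_{\bbi})_{\bbi\in I_{\bN}}$, read in the natural extension of $D = \ARRAY(\cX,I)$, corresponds via Proposition~\ref{prop:exchangeablelaws} (and the identification of natural with canonical extensions in Section~\ref{sec:extension-arrays}) to a unique $\mu\in\SYM(D)$, with $\mu_a = \cL((X_{\bbi})_{\bbi\in I_a})$ for each finite $a$. Applying Theorem~\ref{thm:frt-strong} with the single factor $L=\{1\}$ yields a \emph{true} natural transformation $\eta:R\rightarrow D$ with $\mu = \ur\circ\eta^{-1}$.

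Next I would make $\eta$ explicit. Theorem~\ref{thm:nat-array}, applied to source $E = R$ and target $D = \ARRAY(\cX,I)$, produces kernels $(f_{\bbia})_{\bbia\in\Ir}$ with $f_{\bbia}:R_{[k]} = [0,1]^{2^{[k]}}\rightarrow\cX$ and $f_{\bbia} = f_{\bbia}\circ R[\pi]$ for $\pi\in\stab(\bbia)$. Since $R[\tau](u) = u\circ\im(\tau)$, the invariance clause becomes $f_{\bbia}(u) = f_{\bbia}(u\circ\im(\pi))$, the stated symmetry. Likewise the component formula $\eta_{a,\bbi} = f_{r(\bbi)}\circ R[\pi_{\bbi}]\circ R[\iota_{\dom(\bbi),a}]$ unwinds: using $\im(\iota_{\dom(\bbi),a})(e)=e$ one gets $R[\iota_{\dom(\bbi),a}](u) = (u_e)_{e\subseteq\dom(\bbi)}$, and then $R[\pi_{\bbi}](v) = v\circ\im(\pi_{\bbi})$ gives
$$\eta_{a,\bbi}(u) = f_{r(\bbi)}\big((u_e)_{e\subseteq\dom(\bbi)}\circ\im(\pi_{\bbi})\big).$$

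Finally I would transfer back to the process. Realizing $\ur$ by iid uniforms $U_a, a\in\finN$ via $u = (U_e)_{e\subseteq a}$, the identity $\mu = \ur\circ\eta^{-1}$ gives at each finite $a$ the equality $(X_{\bbi})_{\bbi\in I_a}\ed(\eta_{a,\bbi}((U_e)_{e\subseteq a}))_{\bbi\in I_a}$, hence componentwise $X_{\bbi}\ed f_{r(\bbi)}\big((U_e)_{e\subseteq\dom(\bbi)}\circ\im(\pi_{\bbi})\big)$. Lifting these finite-$a$ equalities to the full $I_{\bN}$-indexed process (as in the proof of Corollary~\ref{cor:frt-rvs}, via Proposition~\ref{prop:translation}(1) and the natural/canonical identification) yields
$$\big(X_{\bbi}\big)_{\bbi\in I_{\bN}}\ed\Big(f_{r(\bbi)}\big((U_e)_{e\subseteq\dom(\bbi)}\circ\im(\pi_{\bbi})\big)\Big)_{\bbi\in I_{\bN}}.$$
Independence of the representation from the choice of $\pirep$ is inherited directly from the corresponding clause of Theorem~\ref{thm:nat-array}. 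The only step requiring genuine care is this last passage between the canonical and natural extensions, and the verification that the distributional equalities supplied at each finite $a$ assemble into a single equality of the whole process.
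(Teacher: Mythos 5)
Your proposal is correct and follows exactly the route the paper indicates: the paper derives Corollary~\ref{cor:frt-array} precisely as a consequence of Theorem~\ref{thm:frt-strong} combined with the kernel description of Theorem~\ref{thm:nat-array}, translated into the natural-extension language of Section~\ref{sec:extension-arrays}. You have merely written out in full the unwinding that the paper leaves implicit, and each step (the symmetry clause from $R[\pi](u)=u\circ\im(\pi)$, the component formula, and the lift from finite $a$ to $I_{\bN}$ via Proposition~\ref{prop:translation}) checks out.
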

	
		It is directly seen that the FRT needs randomization up to order $k = \max\{|\bbia|:\bbia\in\Irep\} = \depth(\ARRAY(\cX,I))$. Applying the representation to $I=\square, \binom{\square}{2}, \square^2_{\neq}$ gives the examples (E1)-(E3), applying it to $I=\square^*_{\neq}$ gives back Theorem~A (from which everything started). It is noted that deriving a FRT for a particular indexing system $I$ from Hoover's (or any other known) FRT may often be more or less easy by "elementary" arguments - which then often depend on the concrete indexing system $I$ considered. The result above has worked these arguments out simultaneously for any indexing system.

		\subsection{Atomic indexing systems} To understand what indexing systems are about it is insightful to consider \emph{atomic} indexing systems. $I$ is called \emph{atomic} if there exists a unique representative index, that is: if $(\Irep,\rep,\pirep)$ is a skeleton, then $\Irep = \{\bbia\}$ has one single element $\bbia$ with $\dom(\bbia) = [k]$ for some $k\geq 0$. It follows that for every index $\bbi$ from $I$ it is $|\bbi|=k$, $r(\bbi)=\bbia$ and $I[\pi_{\bbi}](\bbia) = \bbi$. 
		
		\begin{exmp}
			Atomic indexing systems are $\square$ with representative index $\bbia = 1$, $\binom{\square}{k}$ with $\bbia = \{1,\dots,k\}$ and $\square^k_{\neq}$ with $\bbia = (1,\dots,k)$. Examples of non-atomic indexing systems are $\binom{\square}{\leq k}$ in case $k\geq 1$, $\square^k$ in case $k\geq 2$, $2^{\square}$ or $\square^*_{\neq}$. 
		\end{exmp}
		
		Using Lemma~\ref{lemma:ind} it is straightforward to show that an atomic indexing systems $I$ with $|\bbia|=k$ is always "in between" $\binom{\square}{k}$ and $\square^k_{\neq}$:
		for every finite set $a$ it is
		\begin{equation*}
			|I_a| = \frac{k!}{|\stab(\bbia)|} \cdot \binom{|a|}{k},~~\text{so $|\binom{a}{k}| \leq |I_a| \leq |a^k_{\neq}|$}.
		\end{equation*}
		Further, Theorem~\ref{thm:nat-array} can be used to justify that natural embeddings $\phi^1, \phi^2$
		\begin{equation*}
		\ARRAY(\cX,\binom{\square}{k}) \overset{\phi^1}{\longrightarrow} \ARRAY(\cX,I) \overset{\phi^2}{\longrightarrow} \ARRAY(\cX,\square^k_{\neq})
		\end{equation*}
		are given by 
		\begin{itemize}
			\item $\phi^1_a(x) = (x(\dom(\bbi)))_{\bbi\in I_a}$; the kernel is the identity function $f:\cX^{\binom{[k]}{k}}\rightarrow\cX, v\mapsto v\equiv v([k])$. Injectivity of $\phi^1_a$ follows because $\dom(\bbi)$ ranges over $\binom{a}{k}$ as $\bbi$ ranges over $I_a$,
			\item $\phi^2_a(x) = (x(I[\tau_{\bbj,a}](\bbia)))_{\bbj\in a^k_{\neq}}$; the kernel is $f:\cX^{I_{[k]}}\rightarrow\cX, v\mapsto v(\bbia)$. Injectivity of $\phi^2_a$ follows because $I[\tau_{\bbj,a}](\bbia)$ ranges over all elements from $I_a$ when $\bbj$ ranges over $a^k_{\neq}$ (because then, $\tau_{\bbj,a}$ ranges over all injections $[k]\rightarrow a$).
		\end{itemize}
		
		The term "atomic" is justified by the fact that every indexing system $I$ decomposes into atomic indexing systems: if $(\Irep, \rep, \pirep)$ is a skeleton for $I$ and the representatives are enumerated as $\Irep = \{\bbia_m|m\in M\}$, $M$ a countable set, then for every $m\in M$ an atomic indexing system is given by $I^{(m)}$ defined as $I^{(m)}_b = \{\bbi\in I_b|\bbi\sim\bbia_m\}\subseteq I_b$ and $I^{(m)}[\tau](\bbi) = I[\tau](\bbi)$ for $\bbi\in I^{(m)}_b$. For every finite set $a$ it is $I_a = I^{(1)}_a + I^{(2)}_a + \dots$ a disjoint union because $\sim$ is an equivalence relation on indices; a natural isomorphism $\phi:\ARRAY(\cX,I)\rightarrow\prod_{m\in M}\ARRAY(\cX,I^{(m)})$ is given by components
		\begin{equation*}
			\phi_a:\cX^{I_a}\rightarrow \prod_{m\in M}\cX^{I^{(m)}_a},~~~x \mapsto \Big(x\circ \iota_{I^{(m)}_a, I_a}\Big)_{m\in M}.
		\end{equation*}
		A formal remark on this: if $m\in M$ and $a$ are such that $I^{(m)}_a=\emptyset$, then $\cX^{I^{(m)}_a} = \cX^{\emptyset}$ is the discrete one-point Borel space consisting of the unique function $\emptyset\rightarrow\cX$, which for every $x\in\cX^{I_a}$ equals $x\circ \iota_{\emptyset,I_a}$. In case $M$ is countable infinite, for every finite set $a$ it is $I^{(m)}_a = \emptyset$ for all but finitely many $m$.

		\section{Outlook to seperate exchangeability}\label{sec:sep}
		
		Let $k\geq 1$ be fixed. The statistical philosophy behind (classical) notions of seperate exchangeability is that there are $k$ large populations and a statistician picks from any of the $k$ populations a finite set of individuals, representing individuals from population $l\in[k]$ via IDs from some finite set $a_l$. The complete sample of individuals is represented by the tuple $(a_1,\dots,a_k)$. Picking subgroups is performed \emph{separately} on each group, that is via tuples of injections $(\tau_1,\dots,\tau_k)$ such that $\tau_l:b_l\rightarrow a_l$ is injective. Composition with $(\sigma_1,\dots,\sigma_k)$, with $\sigma_l:c_l\rightarrow b_l$ injective, is $(\tau_1,\dots,\tau_k)\circ (\sigma_1,\dots,\sigma_k) = (\tau_1\circ\sigma_1,\dots,\tau_k\circ\sigma_l)$. The same ideas leading to study BDS ($k=1$) can be extended to $k\geq 1$ and lead to consider functors 
		\begin{equation*}
			G:(\INJo)^k\rightarrow\BOREL,
		\end{equation*}
		where $(\INJo)^k$ is the $k$-fold product category of $\INJo$. A functor $G$ gives the Borel spaces $G_{(a_1,\dots,a_k)}$ representing spaces of measurements on a group of individuals represented by $(a_1,\dots,a_k)$ and for every way of (separately) picking subgroups $(\tau_1,\dots\tau_k)$ a measurable map $G[(\tau_1,\dots,\tau_k)]:G_{(a_1,\dots,a_k)}\rightarrow G_{(b_1,\dots,b_k)}$ which explains how picking subgroups transforms measured data. Imagine the statistician picks individuals and assigns IDs "randomly" and then measures data. As with Borel data structures it is straightforward to model the distribution of such a random measurement by a rule $\rho$ mapping every $(a_1,\dots,a_k)$ to some $\rho_{(a_1,\dots,a_k)}\in \sP(G_{(a_1,\dots,a_k)})$ such that for any  $(\tau_1,\dots,\tau_k):(b_1,\dots,b_k)\rightarrow (a_1,\dots,a_k)$ it holds
		\begin{equation*}
		\rho_{(b_1,\dots,b_k)} = \rho_{(a_1,\dots,a_k)}\circ G[(\tau_1,\dots,\tau_k)]^{-1}.
		\end{equation*}
		Let $\SYM(G)$ be the space of such $\rho$, which are called \emph{symmetric laws on $G$}. Any symmetric law $\rho \in \SYM(G)$ is determined on its \emph{diagonal}, that is by the values $\rho_{(a,\dots,a)}$ ranging over finite sets $a$: for every $(a_1,\dots,a_k)$ let $a=\cup_l a_l$, it is
		\begin{equation*}
		\rho_{(a_1,\dots,a_k)} = \rho_{(a,\dots,a)}\circ G[(\iota_{a_1,a},\dots,\iota_{a_k,a})]^{-1}.
		\end{equation*}
		Let $\Delta:\INJo\rightarrow (\INJo)^k$ be the diagonal functor that sends $a$ to $(a,\dots,a)$ and $\tau$ to $(\tau,\dots,\tau)$. It is 
		$$G\circ \Delta:\INJo\rightarrow\BOREL$$ 
		a Borel data structure and for $\rho\in\SYM(G)$ the rule $\rho\circ \Delta:=[a\mapsto \rho_{(a,\dots,a)}]$ is element $\rho\circ \Delta\in\SYM(G\circ \Delta)$. The map $\rho\mapsto \rho\circ\Delta$ is injective. Let
		$$\SEP(G\circ \Delta) := \{\rho\circ\Delta|\rho\in\SYM(G)\}~~~\subseteq~~~\SYM(G\circ\Delta).$$
		In this context it is reasonable to call $\mu=\rho\circ\Delta\in\SEP(G\circ\Delta)$ a \emph{seperate exchangeable} law and $\mu\in\SYM(G\circ\Delta)$ a \emph{jointly exchangeable} law on the Borel data structure $G\circ\Delta$. The statistical interpretation of the BDS $G\circ\Delta$ is as follows: a statistician picks $n\geq 0$ individuals from each of the $k$ populations, obtaining $k$ distinct groups of individuals each of size $n$, and uses a single finite set of IDs $a$ with $|a|=n$ to identify individuals within each of the $k$ groups, that is every $i\in a$ points to an individual in each of the $k$ groups. Storing (joint) information about the picked groups as data gives a value from $(G\circ\Delta)_a = G_{(a,\dots,a)}$. Picking subgroups in $G\circ\Delta$ is performed such that for every $i\in a$ the $k$ individuals represented by $i$ are treated as "linked together". Loosely speaking, this results in the following statistical interpretation of seperate and jointly exchangeable laws:
		
		\begin{itemize}
			\item Jointly exchangeable laws $\mu\in\SYM(G\circ\Delta)$ arise as follows: $\mu_a\in\sP(G_{(a,\dots,a)})$ with $n=|a|$ is the law of a measurement in which individuals are picked with an arbitrary coupling, that is every pick $i$ represents a simultaneous pick of $k$ individuals, exactly one from each population. 
			\item Seperate exchangeable laws $\mu\in\SEP(G\circ\Delta)$ correspond to the coupling being "independent", that is for every $i$ and every $l\in[k]$ an inidividual from population $l$ is picked randomly and assigned ID $i$. Of course, $\SEP(G\circ\Delta)\subseteq \SYM(G\circ\Delta)$.
		\end{itemize}
		
		To summarize: in any BDS $D$ represented in the form $D = G\circ\Delta$ there is a canonical notion of seperate exchangeability $\SEP(D)$ being stronger than (joint) exchangeability $\SYM(D)$, that is $\SEP(D)\subseteq\SYM(D)$. Of course, when $k=1$ it is $D = G\circ\Delta = G$ and $\SEP(D) = \SYM(D)$. Next, two ways are given to \emph{construct} a BDS $D$ satisfying $D = G\circ \Delta$ for some $G$ constructed from a "base" BDS $D^*:\INJo\rightarrow\BOREL$.\\
		
		For injections $\tau_1,\dots,\tau_k$ with $\tau_l:b_l\rightarrow a_l$ let $\tau_1\times\cdots\times\tau_k:b_1\times\cdots\times b_k\rightarrow a_1\times\cdots\times a_k$ act as $(i_1,\dots,i_k)\mapsto (\tau_1 i_1,\dots,\tau_k i_k)$. The coproduct version is the map $\tau_1\sqcup\cdots\sqcup\tau_k:b_1\sqcup\cdots\sqcup b_k\rightarrow a_1\sqcup\cdots\sqcup a_k$ acing on $b_l$ as $\tau_l$. Considering only the "diagonal" version of these constructions leads to the indexing systems $\square^k$, which sends $b$ to $b^k = b\times\cdots\times b$ and $\tau$ to $\vec{\tau} = \tau\times\tau\cdots\times\tau$, and $\PAIRk$, which sends $b$ to $b\sqcup\cdots\sqcup b$ ($k$-times) and $\tau$ to $\tau\sqcup\cdots\sqcup\tau$.\\
		
		Let $D^*:\INJo\rightarrow\BOREL$ be an arbitrary BDS. 
		
		\begin{itemize}
			\item[(C1)] $D := D^*\circ \square^k$ satisfies $D = G\circ\Delta$ with $G$ being 
			\begin{equation*}
			G_{(a_1,\dots,a_k)} = D^*_{a_1\times\cdots\times a_k}~~~\text{and}~~~G[(\tau_1,\dots,\tau_k)] = D^*[\tau_1\times\cdots\times\tau_k].
			\end{equation*}
			\item[(C2)] $D := D^*\circ \PAIRk$ satisfies $D = G\circ\Delta$ with $G$ being 
			\begin{equation*}
			G_{(a_1,\dots,a_k)} = D^*_{a_1\sqcup\cdots\sqcup a_k}~~~\text{and}~~~G[(\tau_1,\dots,\tau_k)] = D^*[\tau_1\sqcup\cdots\sqcup\tau_k].
			\end{equation*}
		\end{itemize}
	
		The statistical interpretation of these constructions is straightforward: let the $k$ picked groups, each of size $n=|a|$, be represented by $(a,\dots,a)$. Data of the form $D = D^*\circ\square^k$ is measured by building all pairs $(i_1,\dots,i_k)\in a^k$ and using these pairs as new "individuals" on which data is measured according to $D^*$. Data of the form $D = D^*\circ\PAIRk$ is measured by pooling the individuals from the different groups together (in an identifiable way), which gives new IDs $(l,i), i\in a, l\in[k]$ (the elements of $\PAIRk_a$), and using $D^*$ to measure data on the pooled group.

		\begin{exmp}
			The classical notion of seperate exchangeability is about arrays indexed by $\bN^k$. This notion can be derived from the previous construction as follows:\\
			It is $D = \ARRAY(\cX,\square^k) = D^*\circ\square^k$ with $D^*=\SEQ(\cX)$. As seen before (natural extension of arrays + correspondence with laws of random measurements using a countable infinite set of IDs), jointly exchangeable laws $\mu\in\SYM(\ARRAY(\cX,\square^k))$ correspond to laws of $\cX$-valued arrays $X = (X_{\bbi})_{\bbi\in\bN^k}$ satisfying for every bijection $\pi:\bN\rightarrow\bN$
			$$X ~~\ed~~\big(X_{(\pi i_1,\dots,\pi i_k)}\big)_{\bbi = (i_1,\dots,i_k)\in\bN^k}.$$
			The derived notion of seperate exchangeability is the classical one: the law of $X$ is represented by a seperate exchangeable law $\mu\in\SEP(\ARRAY(\cX,\square^k))$ iff 
			$$X ~~\ed~~\big(X_{(\pi_1 i_1,\dots,\pi_k i_k)}\big)_{\bbi = (i_1,\dots,i_k)\in\bN^k}$$
			holds for any $k$ bijections $\pi_1,\dots,\pi_k:\bN\rightarrow\bN$. 
		\end{exmp}
	
		\begin{exmp}
			Let $D^*=\TOTAL$ be the data structure of strict total orders. Seperate exchangeable laws in $D = \TOTAL\circ\PAIR^{(2)}$ appeared in \cite{choi2017doob} in the context of identifying the Doob-Martin boundary of a specific combinatorial Markov chain producing randomly growing words over a $k=2$-letter alphabet; a (functional) representation of seperate exchangeable laws in this case is given by their Theorem~6.12 (where "exchangeable" instead of "seperate exchangeable" is used).
		\end{exmp}
	
		\begin{remark}
			The constructions (C1), (C2) also have \emph{outer} versions, details are only given for the product: let $D^1,\dots,D^k$ be BDS and consider the product $D=D^1\times\cdots\times D^k$. It is $D = G\circ\Delta$ with $G_{(a_1,\dots,a_k)} = \prod_l D^l_{a_l}$ and $G[(\tau_1,\dots,\tau_k)] = \prod_l D^l[\tau_l]$. The statistical interpretation is that on each of the $k$ groups data is measured separately, on group $l$ according to $D^l$, and recorded in a tuple. Seperate exchangeable laws can be easily identified: $\mu\in \SYM(D)$ is seperate exchangeable iff for every finite set $a$
			$$\mu_a(\cdot) = \int_{\eSYM(D^1)\times\cdots\times\eSYM(D^k)}\mu^1_a(\cdot)\otimes\cdots\otimes\mu^k_a(\cdot)d\Xi(\mu^1,\dots,\mu^k)$$
			for a uniquely defined probability measures $\Xi$ on $\eSYM(D^1)\times\cdots\times\eSYM(D^k)$. Note the coincidence that for $D^l = D^* = \SEQ(\cX)$ for all $l$ it is $D = D^*\times\cdots\times D^* \simeq D^*\circ\PAIRk \simeq \SEQ(\cX^k)$, which is special to sequential data.
		\end{remark}
		
		In future work the abstract notion of seperate exchangeability should be investigated further. For that, many of the results derived for functors $D:\INJo\rightarrow\BOREL$ and their exchangeable=symmetric laws should have a straightforward generalization to functors $G:(\INJo)^k\rightarrow\BOREL$ for arbitrary $k\geq 1$. Studying functional representations for seperate exchangeable laws should be particularly fruitful for BDS of the form $D = D^*\circ \square^k$ with $D^* = \ARRAY(\cX,I)$, as in this case $D = \ARRAY(\cX,I)\circ\square^k = \ARRAY(\cX,I\circ \square^k)$ is of array-type again, for which general results have been presented. The same holds for $D = D^*\circ\PAIRk = \ARRAY(\cX, I\circ\PAIRk)$.

		\section{Concluding remarks/outlook}
		
		\begin{remark}[Kernels as morphisms]\label{rem:kernels}
			Let $\KBOREL$ be the category that has Borel spaces as objects and probability kernels as morphisms, that is: a morphism from $\cX$ to $\cY$ in $\KBOREL$ is a measurable map $k:\cX\rightarrow \sP(\cY)$ and composition with $k':\cY\rightarrow\sP(\cZ)$ is defined by disintegration:
			$$(k'\circ k)(x,\cdot) = \int_{\cY}k'(y,\cdot)k(x,dy), x\in\cX.$$ 
			A good part of our definitions and results should also hold when $\BOREL$ is replaced by $\KBOREL$, that is the initial object of study would be functors $D:\INJo\rightarrow\KBOREL$; however, the focus of this work was on \emph{functional} aspects of exchangeability based on the statistical interpretation of "manipulating measurements in a deterministic way". A possible benefit on extending the theory from $\BOREL$ to $\KBOREL$ is to be investigated. It is noted that the results of this work would embed nicely into the more general framework: the category $\KBOREL$ is obtained as the Kleisli category induced by the Giry monad, see \cite{giry1982categorical}, which has a version on $\BOREL$. The results about BDS and natural transformations between BDS would embed in the $\KBOREL$-setting by identifying a function $\cX\rightarrow\cY, x\mapsto f(x)$ with the kernels $\cX\rightarrow\sP(\cY), x\mapsto \delta_{f(x)}$. 
		\end{remark}
	
		\begin{remark}[Conjecture about generalized Noise-Outsourcing Lemma]\label{rem:conj}
			\cite{austin2015exchangeable} studied exchangeable laws in $\sP\circ D$ with $D = \prod_{j=0}^k\ARRAY(\cX^{(j)},\binom{\square}{j})$. As noted in Remark~\ref{rem:skew}, the notions of natural transformations and kernel functions implicitly appeared in that context as skew-product type functions and skew-product tuples. The results obtained there lead to the following conjecture, formulated in a "weak" form for arbitrary BDS of arbitrary depth:
			\begin{conjecture}
				Let $E$ and $D$ be Borel data structures. 
				\begin{itemize}
					\item For every $\mu\in\SYM(E)$ and $\mu$-a.s. natural transformation $\eta:E\rightarrow\sP\circ D$ there exists a $\mu\otimes\ur$-a.s. natural transformation $\tilde\eta:E\times R\rightarrow D$ such that for every finite set $a$ it is $\eta_a(x) = \ur_a\circ\tilde\eta_a(x,\cdot)^{-1}$ for $\mu_a$-almost all $x\in E_a$,
					\item Abstract Noise-Outsourcing Lemma: for every $\rho\in\SYM(E\times D)$ with first marginal $\mu\in\SYM(E)$ there exists a $\mu\otimes\ur$-a.s. natural transformation $\eta:E\times R\rightarrow D$ such that $\rho = \mu\otimes\ur \circ (1_E\otimes \eta)^{-1}$ with $1_E\otimes \eta:E\times R\rightarrow E\times D$ having components $E_a\times R_a\rightarrow E_a\times D_a, (x,y)\mapsto (x,\eta_a(x,y))$.
				\end{itemize}
			\end{conjecture}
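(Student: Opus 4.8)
The plan is to prove the second statement (the abstract Noise-Outsourcing Lemma) first and to deduce the first statement from it. For the deduction, given a $\mu$-a.s. natural transformation $\eta:E\to\sP\circ D$ I would form the semidirect-product rule $\rho$ with components $\rho_a(dx\,dy)=\mu_a(dx)\,\eta_a(x)(dy)$. That $\rho\in\SYM(E\times D)$ follows from the $\mu$-a.s. naturality of $\eta$ together with exchangeability of $\mu$: for $\tau:b\to a$ one rewrites $\eta_a(x)\circ D[\tau]^{-1}=\eta_b(E[\tau]x)$ for $\mu_a$-almost all $x$ and substitutes to obtain $\rho_a\circ((E\times D)[\tau])^{-1}=\rho_b$, and the $E$-marginal of $\rho$ is $\mu$. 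Applying the Noise-Outsourcing Lemma to this $\rho$ produces a $\mu\otimes\ur$-a.s. natural transformation $\tilde\eta:E\times R\to D$ with $\rho=\mu\otimes\ur\circ(1_E\otimes\tilde\eta)^{-1}$; essential uniqueness of disintegration over the Borel space $E_a$ then forces $\ur_a\circ\tilde\eta_a(x,\cdot)^{-1}=\eta_a(x)$ for $\mu_a$-almost all $x$, which is exactly the first statement.

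For the Noise-Outsourcing Lemma I would first reduce the target to $H:=\ARRAY([0,1],\square^*_{\neq})$. Embed $D$ into $H$ by Theorem~\ref{thm:embedding}, say $\phi:D\to H$, and choose an a.s. natural left-inverse $\theta:H\to D$ with $\theta\circ\phi=\id_D$ as in Proposition~\ref{prop:inverse}. It then suffices to construct a $\mu\otimes\ur$-a.s. natural transformation $\eta':E\times R\to H$ representing $\rho':=\rho\circ(1_E\otimes\phi)^{-1}\in\SYM(E\times H)$, because $\eta:=\theta\circ\eta'$ is $\mu\otimes\ur$-a.s. natural by Lemma~\ref{lemma:chaining}, and $(1_E\otimes\theta)\circ(1_E\otimes\phi)=1_E\otimes\id_D$ gives $\mu\otimes\ur\circ(1_E\otimes\eta)^{-1}=\rho'\circ(1_E\otimes\theta)^{-1}=\rho$.

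To build $\eta'$ I would realise $\rho'$ at IDs $\bN$ by a jointly exchangeable pair $(X,Y)$ with $X$ an $E$-measurement and $Y$ an $H$-array, and use a second embedding $\phi_E:E\to H$ (Theorem~\ref{thm:embedding}) to view $(\phi_E(X),Y)$ as a jointly exchangeable $([0,1]\times[0,1])$-array indexed by $\bN^*_{\neq}$. Since $\phi_E$ is an embedding, conditioning on $\phi_E(X)$ is the same as conditioning on $X$, and the entries of $\phi_E(X)$ on sub-tuples of $\dom(\bbi)$ are $E[\iota_{\dom(\bbi),a}]$-measurable; hence any representation of $Y$ as a \emph{local} function of $\phi_E(X)$ and fresh uniforms pulls back to kernel functions $f_k:E_{[k]}\times R_{[k]}\to[0,1]$, which by Proposition~\ref{thm:nat-trans-into-end} assemble into the desired $\eta':E\times R\to H$. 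The whole argument therefore reduces to one relative representation statement for the joint array: keeping $A:=\phi_E(X)$ fixed, write
\[
B_{\bbi}\;=\;G_{k}\Big((A_{\bbi'})_{\dom(\bbi')\subseteq\dom(\bbi)},\,(V_{\pi_{\bbi}(e)})_{e\in 2^{[k]}}\Big),\qquad k=|\dom(\bbi)|,
\]
with $V_a,\ a\in\finN$, fresh iid uniform variables independent of $A$.

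I expect this local conditional representation to be the main obstacle. The abstract noise-outsourcing lemma for Borel spaces yields $B=g(A,U)$ with a single uniform $U$, but it destroys all index structure; the real task is to outsource simultaneously over all finite sets $a$ in a manner compatible with the index arithmetic, which is precisely what naturality of $\eta'$ demands. My intended route is to run Hoover's coding proof (Theorem~7.21 in \cite{kallenberg2006probabilistic}, the source of Theorem~A) relative to the sub-$\sigma$-field generated by $A$, outsourcing the coding uniforms of the joint Hoover representation conditionally on $A$ and verifying, by a Kolmogorov-consistency argument as in the proof of Theorem~\ref{thm:equivalence}, that the resulting functions are local and consistent across index sets. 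The special case $D=\prod_{j=0}^{k}\ARRAY(\cX^{(j)},\binom{\square}{j})$, where exactly such a relative representation was obtained through the skew-product analysis of \cite{austin2015exchangeable}, should serve as a template and a consistency check for the general construction.
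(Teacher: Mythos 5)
This statement is the conjecture of Remark~\ref{rem:conj}: the paper offers \emph{no proof} of it, so there is nothing of the author's to compare your argument against, and what you have written is a research plan rather than a proof. The parts you do carry out are correct but are the routine parts, given the paper's toolkit: the deduction of the first bullet from the second (form the semidirect product $\rho_a(dx\,dy)=\mu_a(dx)\,\eta_a(x)(dy)$, verify $\rho\in\SYM(E\times D)$ from $\mu$-a.s.\ naturality, apply the Noise-Outsourcing statement, conclude by essential uniqueness of disintegration over Borel spaces) is sound, and so is the reduction of the target from $D$ to $\ARRAY([0,1],\square^*_{\neq})$ via Theorem~\ref{thm:embedding}, Proposition~\ref{prop:inverse} and Lemma~\ref{lemma:chaining}. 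But the ``relative Hoover representation'' you defer to the last step is not a technical lemma to be checked later; it is the entire content of the conjecture, and you leave it as an intended route.

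The more serious issue is that your route cannot be completed, because the second bullet --- which your plan makes primary --- is false in the stated generality. Take $E=\TOTAL$, $D=\SEQ(\{0,1\})$, let $\xi_i$, $i\in\bN$, be iid $\unif[0,1]$, let $X$ be the strict total order defined by $i<_X j:\Leftrightarrow\xi_i<\xi_j$, and set $Y_i=1(\xi_i<1/2)$. The pair $(X,Y)$ is sampling consistent and jointly exchangeable, hence by Proposition~\ref{prop:exchangeablelaws} defines $\rho\in\SYM(E\times D)$ whose first marginal is the unique $\mu\in\SYM(\TOTAL)$. Suppose $\eta:E\times R\rightarrow D$ were a $\mu\otimes\ur$-a.s.\ natural transformation with $\rho=\mu\otimes\ur\circ(1_E\otimes\eta)^{-1}$. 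Applying a.s.\ naturality to $\tau=\iota_{\{1\},\{1,2\}}$, and using that $\TOTAL_{\{1\}}$ is a one-point space, yields a measurable $g:[0,1]^2\rightarrow\{0,1\}$ with $\eta_{\{1,2\}}(x,u)_1=g(u_{\emptyset},u_{\{1\}})$ for $\mu_{\{1,2\}}\otimes\ur_{\{1,2\}}$-almost all $(x,u)$; since the $E$- and $R$-inputs are independent under $\mu\otimes\ur$, any law of the form $\mu\otimes\ur\circ(1_E\otimes\eta)^{-1}$ therefore makes $Y_1$ independent of the order on $\{1,2\}$. But under the $\rho$ above,
\begin{equation*}
\Cov\big(1(1<_X 2),\,Y_1\big)=\bP[\xi_1<\xi_2,\ \xi_1<1/2]-\bP[\xi_1<\xi_2]\,\bP[\xi_1<1/2]=\tfrac{3}{8}-\tfrac{1}{4}=\tfrac{1}{8}\neq 0,
\end{equation*}
a contradiction. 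The obstruction sits exactly at the step you flagged: $Y_i$ is a function of the quantile $\xi_i$, which is $\sigma(X)$-measurable (by a law of large numbers along the infinite order) but not measurable with respect to any finite restriction $X_a$, let alone $X_{\{i\}}$; a.s.\ natural transformations can access the base only through finite local restrictions plus independent noise, so no conditional ``outsourcing of Hoover's coding'' can produce this $\rho$. Note that this example does not contradict the first bullet: the disintegration kernels of this $\rho$ fail to be $\mu$-a.s.\ natural (their one-dimensional marginals depend on the order, which naturality forbids), so $\rho$ is not a semidirect product of any natural kernel. Consequently, if you want a provable statement you should either attack the first bullet directly, without routing through the second, or keep the second bullet but add hypotheses on $E$ --- for instance $E$ a countable product of array-type data structures, the setting of the skew-product results of \cite{austin2015exchangeable} you propose as template, where the needed locality of the base is actually available through Theorem~\ref{thm:nat-array} and Proposition~\ref{prop:modification}.
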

		\end{remark}
		
		\begin{remark}[Topological assumptions]
			Topological assumptions may be needed to obtain further results, in particular for sub-data structures, which is reflected by the topological assumptions being made in \cite{austin2010testability} for studying hereditary properties. For that, one could replace $\BOREL$ with $\POLISH$ or $\COMP$ (continuous maps between polish/compact metrizable spaces), both of which have a version of the Giry monad (equipping probability measures with the topology of weak convergence). An interesting question arises: it is known that for every measurable group action $\bSi\times\cS\rightarrow\cS$ on a Borel space $\cS$ there exists a polish topology on $\cS$ generating its $\sigma$-field and such that $\bSi\times\cS\rightarrow\cS$ becomes a continuous group action, see \cite{kechris2000descriptive}. Let $L:\POLISH\rightarrow\BOREL$ be the forgetful functor mapping a polish space to the obtained Borel space and a continuous map to itself; is it true that for every BDS $D:\INJo\rightarrow\BOREL$ there exists a functor $D^*:\INJo\rightarrow\POLISH$ such that $D = L\circ D^*$? Such a functor $D^*$ would correspond to a rule that maps every finite $a$ to a polish topology $\cT_a$ on $D_a$ generating its $\sigma$-field and making all maps $D[\tau]$ continuous. 
		\end{remark}
		
		\begin{remark}[Functors in \cite{austin2010testability}]\label{rem:subcantor}
			In Definition~3.5 of \cite{austin2010testability} contravariant functors $D:\CINJo\rightarrow\SUBCANTOR$ have been introduced, where $\SUBCANTOR$ has sub-Cantor spaces as objects (topological spaces homeomorphic to a compact subsets of the standard Cantor space) and probability kernels as morphisms. Restricting such a functor to $\INJo\subset\CINJo$ and keeping only the measureability structure gives a functor $\INJo\rightarrow\KBOREL$, see Remark~\ref{rem:kernels}. The derived functor obtained from a sub-Cantor palette $(\cZ_j)_j$ (Definition~3.7) corresponds to $\prod_{j=0}^{\infty}\ARRAY(\cZ_j, \square^j_{\neq})$ in our notation.
		\end{remark}
		
		\begin{remark}[Quasi-Borel spaces]
			\cite{heunen2017convenient} introduces the category of \emph{quasi-Borel spaces}, $\QBOREL$, aiming to provide a more solid mathematical foundation to applications in stochastic programming motivated from the (unpleasant) observation that $\BOREL$ is not Cartesian closed. In particular, a de~Finetti-type representation theorem for quasi-Borel-spaced exchangeable sequences is shown; given that and their statistical motivation, it seems interesting to investigate if and in what sense definitions and results in the BDS context translate to functors $D:\INJo\rightarrow\QBOREL$. 
		\end{remark}
		
		\begin{remark}[Using category theory terminology]\label{rem:ct-reformulation}
			It should be possible to translate definitions and results using more category theory terminology, which would give the opportunity to search for further abstractions. For example, Theorem~\ref{thm:frt-strong} (strong FRT for products of arrays) can be formulated as follows: Let $\ARRAY$ be the category that has countable products of array-type data structures as objects and natural transformations as morphisms. Consider the functor $\SYM:\ARRAY\rightarrow\BOREL$ that sends $D$ to the Borel space of exchangeable laws $\SYM(D) = \lim \sP\circ D$ and a natural transformation $\eta:D\rightarrow E$ to the push-forward $\eta^*:\SYM(D)\rightarrow\SYM(E), \mu\mapsto \mu\circ\eta^{-1}$. Let $\ptt$ be the one-point Borel space. Theorem~\ref{thm:frt-strong} is equivalent to the existence of a \emph{weak universal arrow from $\ptt$ to $\SYM$} witnessed by the pair $\langle R, \unif(R)\rangle$, where $\unif(R)$ is viewed as a function $\ptt\rightarrow\SYM(R)$, see \cite{mac2013categories} Section~X.2.
		\end{remark}
		
		\begin{remark}[Shift-invariance and contractability]
			Suppose $D:\INJo\rightarrow\BOREL$ is a BDS having an extension $D:\CINJo\rightarrow\BOREL$. Let $\INJ(\bN,\bN)$ the set of injections $\tau:\bN\rightarrow \bN$. It was seen that exchangeable laws $\SYM(D)$ corresponds to laws of $D_{\bN}$-valued random variables $X$ satisfying $D[\tau](X)\ed X$ for all $\tau\in\INJ(\bN,\bN)$. Any subset $G\subseteq \INJ(\bN, \bN)$ introduces a \emph{weaker} notion of invariance: (The law of) A $D_{\bN}$-valued random variable $X$ is called $G$-invariant iff $D[\tau]X\ed X$ for all $\tau\in G$; of course exchangeability induces $G$-invariance. To study $G$-invariance one can assume wlog that $G$ is closed under composition and contains $\id_{\bN}$, that is $G$ being a monoid under composition. Two classical examples fall into this frame:
			\begin{itemize}
				\item Shift-invariance: $G = \{\tau_k|k\in\bN_0\}$ with $\tau_k(i) = i+k$,
				\item Contractability/Spreadability: $G = \{\tau|\tau~\text{is strictly increasing}\}$. Note that $\tau\in G$ \emph{spreads} IDs and so, by contravariance, $D[\tau]$ \emph{contracts} measurements. 
			\end{itemize}
			Both these invariances are based on additional mathematical structure on the concrete choice of IDs $\bN$: addition for shift-invariance and a total order in case of contractability. How to invoke  additional structure on IDs into an abstract category theory framework remains open for future research, but should give interesting insights: comparing Theorems 7.15 and 7.22 in \cite{kallenberg2006probabilistic} shows a deep connection between contractability in $\ARRAY(\cX,2^{\square})$ and exchangeability in $\ARRAY(\cX,\square^*_{\neq})$.
		\end{remark}

		\section{Appendix}
		
		\subsection{Borel spaces}\label{appendix:borel_spaces}
		
		In \cite{kallenberg1997foundations} \emph{Borel spaces} are introduced as measurable spaces $\cX$ for which there exists a Borel subset $B\subseteq [0,1]$ and a bi-measurable bijection $f:\cX\rightarrow B$. Borel spaces coincide with \emph{standard Borel spaces} which are typically introduced as measurable spaces $\cX$ on which the $\sigma$-field is generated from a polish topology on $\cX$. The theory of (standard) Borel spaces is presented, for example, in \cite{kechris2012classical}.\\
		Borel spaces enjoy the following closure properties:
		
		\begin{itemize}
			\item Countable products and coproducts of Borel spaces are Borel,
			\item Measurable sub-spaces of Borel spaces are Borel,
			\item For a measurable space $\cX$ let $\sP(\cX)$ be the space of probability measures on $\cX$ equipped with the $\sigma$-field generated by the evaluation maps $\nu\in\sP(\cX)\mapsto \nu(M)\in[0,1], M\subseteq \cX$ measurable. If $\cX$ is Borel, so is $\sP(\cX)$, see Theorem~1.5 in \cite{kallenberg2017random}.
		\end{itemize}
	
		Let $\cX, \cY$ be Borel spaces and $f:\cX\rightarrow\cY$ measurable. 
		\begin{itemize}
			\item If $f$ is bijective its inverse $f^{-1}:\cY\rightarrow\cX$ is measurable,
			\item If $f$ is injective and $M\subseteq \cX$ measurable, then the image $f(M)\subseteq\cY$ is measurable and in case $M\neq\emptyset$ it is $M\rightarrow f(M), x\mapsto f(x)$ a bi-measurable bijection between the Borel spaces $M$ and $f(M)$, see Corollary~(15.2) in \cite{kechris2012classical}, 
			\item If $f$ is injective then there exists a measurable $g:\cY\rightarrow\cX$ with $g\circ f = \id_{\cX}$.
		\end{itemize}

		Borel spaces make the concept of conditional distributions well-behaved, see for example Lemma~3.1 in \cite{austin2012exchangeable}:
		
		\begin{theorem*}[Noise-Outsourcing]
			Let $\cX$ be a Borel space and $\cY$ an arbitrary measurable space. Let $(X,Y)$ be a $\cX\times\cY$-valued random variable. Then there exists a measurable function $f:\cY\times[0,1]\rightarrow\cX$ such that $(X,Y) \ed (f(Y,U), Y)$ with $U\sim\unif[0,1]$ independent from $Y$. 
		\end{theorem*}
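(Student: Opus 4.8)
The plan is to realise the conditional distribution of $X$ given $Y$ by an explicit randomised transform, thereby reducing the statement to the classical one-dimensional inverse-distribution-function construction. Since $\cX$ is a Borel space, I first fix a bi-measurable bijection $\phi:\cX\rightarrow B$ onto a Borel subset $B\subseteq[0,1]$. If I can produce a measurable $g:\cY\times[0,1]\rightarrow[0,1]$ with $(\phi(X),Y)\ed(g(Y,U),Y)$ and $g(Y,U)\in B$ almost surely, then $f:=\phi^{-1}\circ g$ on the event $\{g\in B\}$, extended by an arbitrary fixed $x_0\in\cX$ off this null set, is the desired function; it is measurable because $B$ and $\phi^{-1}$ are measurable. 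Hence I may and do assume $\cX=B\subseteq[0,1]$ and that $X$ is $B$-valued.

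The main input is a regular conditional distribution of $X$ given $Y$: because the conditioned variable $X$ takes values in the Borel space $\cX$, there exists a probability kernel $\kappa:\cY\rightarrow\sP(\cX)$, that is a measurable map $y\mapsto\kappa(y,\cdot)$, such that $\kappa(Y,\cdot)$ is a version of $\bP[X\in\cdot\mid Y]$. This disintegration is precisely where the Borel hypothesis on $\cX$ is used; the space $\cY$ may remain an arbitrary measurable space. I regard producing $\kappa$ as the crux of the argument, since everything that follows is a $\cY$-parametrised inverse-transform sampling.

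Next I construct $g$ from $\kappa$ by a quantile transform. Regarding each $\kappa(y,\cdot)$ as a measure on $[0,1]$ carried by $B$, I set $F_y(t)=\kappa(y,[0,t]\cap\cX)$ for its distribution function and define
\[
 g(y,u)=\inf\{t\in[0,1]:F_y(t)\geq u\},\qquad (y,u)\in\cY\times[0,1].
\]
For each fixed $t$ the map $y\mapsto F_y(t)$ is measurable because $\kappa$ is a kernel, and right-continuity of $F_y$ yields the identity $\{(y,u):g(y,u)\leq t\}=\{(y,u):u\leq F_y(t)\}$, whose right-hand side is measurable in $\cY\times[0,1]$; hence $g$ is jointly measurable. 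By the classical inverse-transform fact, for $U\sim\unif[0,1]$ one has $g(y,U)\sim\kappa(y,\cdot)$ for every $y$, and since $\kappa(y,\cdot)$ is carried by $B$ we obtain $g(y,U)\in B$ almost surely, so $g$ meets the requirements imposed in the reduction step.

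Finally I verify the distributional identity. Taking $U\sim\unif[0,1]$ independent of $Y$, the conditional law of $g(Y,U)$ given $Y=y$ is $\kappa(y,\cdot)$ by the previous step together with the independence of $U$ from $Y$. Thus $(g(Y,U),Y)$ has the same $Y$-marginal $\cL(Y)$ and the same conditional first-coordinate kernel $\kappa$ as $(X,Y)$, so the two joint laws coincide, i.e.\ $(X,Y)\ed(g(Y,U),Y)$; transporting through $\phi$ returns the assertion for the original $\cX$. The only genuinely nontrivial ingredient is the existence of the disintegration $\kappa$ for an arbitrary measurable $\cY$, which rests entirely on the Borel property of $\cX$; the joint measurability of $g$ and the inverse-transform identity are routine one-dimensional facts.
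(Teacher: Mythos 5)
Your proof is correct and is essentially the standard argument for this lemma; the paper itself gives no proof, deferring to Lemma~3.1 of the cited reference of Austin, whose argument is exactly your route (transfer to a Borel subset of $[0,1]$, existence of a regular conditional distribution of the Borel-valued coordinate given the arbitrary one, then a jointly measurable quantile coupling). All the delicate points are handled properly: the Borel hypothesis is used only where it is needed (the bi-measurable bijection and the disintegration), and the joint measurability of the generalized inverse is justified via the identity $\{g(y,u)\leq t\}=\{u\leq F_y(t)\}$.
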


		\subsection{Some proofs}
		
		\begin{proof}[Proof of Proposition~\ref{prop:exchangeablelaws}]
			First we check that the construction $\cL(X)\in\SYM(D;C) \mapsto \mu$ is well-defined: let $a$ be finite and let $c,c'\in\finC$ and $\pi:a\rightarrow c, \pi':a\rightarrow c'$ be two bijections. Then there exists a bijection $\sigma:c\rightarrow c'$ with $\sigma\circ\pi = \pi'$ and hence $D[\pi'](X_{c'}) = D[\sigma\circ\pi](X_{c'}) = D[\pi](D[\sigma](X_{c'}))$. Now let $d\in\finC$ be with $c\cup c'\subseteq d$. There exists a bijection $\tilde \sigma:d\rightarrow d$ such that $\tilde\sigma\circ \iota_{c,d} = \iota_{c',d}\circ \sigma$. With this $\tilde\sigma$ the functorality of $D$ and sampling consistency and exchangeability of $X$ gives
			\begin{align*}
			D[\sigma](X_{c'}) \as D[\sigma](D[\iota_{c',d}](X_d)) = D[\iota_{c',d}\circ\sigma](X_d) = D[\tilde\sigma\circ\iota_{c,d}](X_d) &= D[\iota_{c,d}](D[\tilde\sigma](X_d))\\
			& \ed D[\iota_{c,d}](X_d) \as X_c,
			\end{align*}
			which gives $D[\pi](X_c)\ed D[\pi](D[\sigma](X_{c'})) \as D[\pi'](X_{c'})$ and hence that the definition $\mu_a = \cL(D[\pi](X_c))$ does not depend on the concrete choice of $c, \pi$.\\
			Next check $\mu\in\SYM(D)$: let $\tau:b\rightarrow a$ be an injection and $\mu_a = \cL(D[\pi](X_c))$. Then $\mu_a\circ D[\tau]^{-1} = \cL\big(D[\tau](D[\pi](X_c))\big)$. Let $c' = \pi(\tau(b))\subseteq c$ and $\pi' = \widehat{\pi\circ\tau}:b\rightarrow c', i\mapsto \pi(\tau(i))$, which is bijection. It holds that $\pi\circ\tau = \iota_{c',c}\circ\pi'$. By functorality of $D$ and sampling consistency of $X$ it is 
			$$D[\tau](D[\pi](X_c)) = D[\pi\circ \tau](X_c) \as D[\pi'](X_{c'}),$$
			hence $\mu_a\circ D[\tau]^{-1} = \mu_b$.\\
			Next check that the construction $\SYM(D;C)\rightarrow \SYM(D)$ is a bijection. It is injective: let $\cL(X)\in\SYM(D;C)$ with constructed rule $\mu\in\SYM(D)$. For $c\in\finC$ it is $\mu_c = \cL(X_c)$. By sampling consistency the law of $X = (\Xc)_{c\in\finC}$ is determined by $(\mu_c)_{c\in\finC}$ hence the construction is injective.\\
			Next check that the construction is surjective, that is for every rule $\eta\in\SYM(D)$ there exists $\cL(X)\in\SYM(D;C)$ with $X_c\sim \mu_c$ for all $c\in\finC$. The Borel space assumption is needed to apply Kolmogorov extension theorem: let $(c_n)_{n\geq 1}\subseteq C$ be an increasing sequence of finite sets with $C = \cup_n c_n$. Applying Theorem 8.21 in \cite{kallenberg1997foundations} gives the existence of a stochastic process $(X_{c_n})_{n\geq 1}$ such that $X_{c_n}\sim \mu_{c_n}$ for all $n$ and $D[\iota_{c_m,c_n}](X_{c_n}) = X_{c_m}$ almost surely for all $m\leq n$. For any finite set $c\in\finC$ let $c_n$ be the smallest set with $c\subseteq c_n$ and define $X_c = D[\iota_{c,c_n}](X_{c_n})$. It can easily be checked that $X = (X_c)_{c\in\finC}$ is an exchangeable $D$-measurement, that is $\cL(X)\in \SYM(D;C)$, whose law gives back the rule $\mu$. 
		\end{proof}
	
		\begin{proof}[Proof of Proposition~\ref{prop:translation}]
			(1)~Let $A$ be countable infinite. $D_A$ is a measurable subset of $\prod_{a\in\finA}D_a$ because it is the countable intersection of sets $\{(x_a)_a|D[\iota_{c,b}](x_b)=x_c\}$ over $c\subseteq b\in\finA$, the latter are measurable because $D[\tau]$ is for every $\tau$. By assumption $\SYM(D)\neq\emptyset$, let $\mu\in\SYM(D)$. By Proposition~\ref{prop:exchangeablelaws} there exist an exchangeable $D_A$-measurement $X=(X_a)_{a\in\finA}$ with $X_a\sim\mu_a$ for every $a$, it holds that $\bP[X\in D_A]=1$ and hence $D_A\neq\emptyset$. The property $X\ed X'$ iff $X_a \ed X'_a$ for all finite $a$ follows from $D[\iota_{a',a}](X_a)= X_{a'}$ for all $a'\subseteq a$ together with laws of processes $X = (X_a)_{a}$ being determined by finite dimensional margins.\\
			(2)~Let $A$ be countable infinite. By definition for every $a\in\finA$ it is $D[\iota_{a,A}]:D_A\rightarrow D_a, (x_{a'})_{a'\in\finA}\mapsto x_a$. The $\sigma$-field on $D_A$ inherited of $\prod_aD_a$ is also generated by these projections, in particular $D[\iota_{a,A}]$ is measurable. It is easily checked that the extension of $D$ to arbitrary countable sets satisfies functorality, that is for all composable injections $\tau, \sigma$ between countable sets its holds $D[\tau\circ\sigma] = D[\sigma]\circ D[\tau]$ and $D[\id_A] = \id_{D_A}$. Only the measureability of $D[\tau]:D_A\rightarrow D_B$ needs to be checked: let $\tau:B\rightarrow A$ be injective. If $B=b$ is finite then $D[\tau] = D[\hat\tau]\circ D[\iota_{\tau(b),A}]$ is measurable by composition. If $B$ is also infinite, then $D[\tau]:D_A\rightarrow D_B$ is measurable iff $D[\iota_{b,B}]\circ D[\tau]$ is measurable for every $b\in\finB$. By functorality $D[\iota_{b,B}]\circ D[\tau] = D[\tau\circ\iota_{b,B}]:D_A\rightarrow D_b$ which was seen to measurable before.\\
			(3)~Let $X = (\Xa)_{a\in\finA}$ be an exchangeable $D$-measurement using IDs $A$. (iv)$\Rightarrow$(iii)$\Rightarrow$(ii) is clear. Assume (ii) and let $a\in\finA$ and $\pi:a\rightarrow a$ bijective. Extend $\pi$ to a bijection $\tilde\pi:A\rightarrow A$ via $\tilde\pi = \pi$ on $a$ and $\tilde \pi(i)=i$ on $A\setminus a$. By (ii) it is $D[\tilde \pi]X\ed X$ and hence $D[\iota_{a,A}]D[\tilde \pi]X\ed D[\iota_{a,A}]X \as X_a$. Let $\tau = \tilde\pi\circ\iota_{a,A}:a\rightarrow A$. It is $D[\iota_{a,A}]D[\tilde\pi]X = D[\tau]X$ and the latter equals $D[\hat\tau]X_{\tau(a)} = D[\pi](X_a)$ by definition, hence $X_a \ed D[\pi](X_a)$ and (i) follows. Now assume (i) and show (iv). By Proposition~\ref{prop:exchangeablelaws} there is $\mu\in\SYM(D)$ with $X_a\sim \mu_a$ for every finite set $a$. Let $\tau:A\rightarrow A$ be an arbitrary injection. It is 
			$$D[\tau](X) = \big(D[\widehat{\tau\circ\iota_{a,A}}]X_{\tau(a)}\big)_{a\in \finA}.$$
			Because laws on $D_A$ are determined by one-dimensional margins, see (1), only $D[\widehat{\tau\circ\iota_{a,A}}]X_{\tau(a)} \ed X_a$ needs to be shown. Now it is 
			$\widehat{\tau\circ\iota_{a,A}} = \tilde\tau$ an injection $a\rightarrow\tau(a)$, hence $D[\tilde\tau]X_{\tau(a)} \ed X_a$ follows from $\mu\in\SYM(D)$.\\
			(4)~Let $\tau:B\rightarrow A$ be injective between countable infinite sets and $\cL(X)\in \SYM(D;A)$. It is $D[\tau](X)$ a $D_B$-valued random variable. For every bijection $\pi:B\rightarrow B$ choose a bijection $\tilde \pi:A\rightarrow A$ with $\tilde\pi\circ\tau = \tau\circ\pi$. It holds $D[\pi]D[\tau]X = D[\tau\circ\pi](X) = D[\tilde\pi\circ\tau](X) = D[\tau]D[\tilde\pi]X \ed D[\tau]X$, that is $D[\tau]X$ is exchangeable and $\cL(X)\mapsto \cL(D[\tau]X)$ is a map $\SYM(D;A)\rightarrow \SYM(D;B)$. This map is an isomorphism due to Proposition~\ref{prop:exchangeablelaws}, which shows that both $\SYM(D;A)$ and $\SYM(D;B)$ can be identified with $\SYM(D)$ by the rule constructed there.
		\end{proof}
	
		\begin{proof}[Proof of Theorem~\ref{thm:correspondence-limits}]
			For all $x\in D_b, y\in D_a$ and bijections $\pi:a'\rightarrow a, \sigma:b'\rightarrow b$ it holds that 
			\begin{equation*}
			\density(x,y) = \density(D[\sigma](x), D[\pi](y)),
			\end{equation*}
			it is thus no restriction to consider only elements $x$ with $x\in D_{[k]}$ for some $k\geq 0$ when investigating limits. 
			Thus, only finite subsets $b, a\in\finN$ are considered and laws $\mu\in\eSYM(D)$ are identified with laws of ergodic exchangeable $D$-measurements $X = (\Xa)_{a\in\finN}$.\\
			
			Let $S = \cup_{k\geq 0}D_{[k]}$ and for $x\in S$ with $k=|x|$ let $1_{\{x\}}:D_{[k]}\rightarrow \{0,1\}$ be the indicator of $\{x\}\subseteq D_{[k]}$. Let $\cG = \{1_{\{x\}}|x\in S\}$. The law of any exchangeable $D$-measurement $X=(\Xa)_{a\in\finN}$ is determined by the expectations over $\cG$, that is by $\bE[1_{\{x\}}(X_{[k]})] = \bP[X_{[|x|]}=x], x\in S$.\\
			
			Applying (i)$\Rightarrow$(iii) of Theorem~\ref{thm:independence} to $\cG$ gives that for every ergodic $X\sim\mu$ there exists a convergent sequence $\bbx \subseteq S$ such that (\ref{eq:density}) holds.\\
			
			On the other hand it is easy to check that a limit of a convergent sequence $\bbs = (x_n)_n\subseteq S$ with $m_n=|x_n|\rightarrow\infty$ gives a rule $\mu\in\SYM(D)$
			via 
			$$\mu_a(M) = \sum_{x\in M\subseteq D_a}\lim_{n\rightarrow\infty}\density(x,x_n).$$
			This works because $D_a$ and hence $M\subseteq D_a$ are assumed to be finite.\\
			It only needs to be checked that $\mu$ is ergodic. Let $X\sim\mu$ using IDs $\bN$. Because the characterization of ergodicity via independence, Theorem~\ref{thm:independence}, check that for every $a,b\in\finN$ with $a\cap b=\emptyset$ and $x\in D_a, x'\in D_b$ it holds that $1(X_a=x), 1(X_b=x')$ are independent.\\
			
			For $a\cup b\subseteq [k]$, by sampling consistency, that probability for $\{X_{[k]}=x\}$ are represented by limits and that $D_{[k]}$ is finite one obtains:
			\begin{align*}
			\bP\big[X_a=x, X_b=x'\big] &= \sum_{y\in D_{[k]}}\bP[X_{[k]}=y]\bP[X_a=x, X_b=x'|X_{[k]}=y]\\
			&= \sum_{y\in D_{[k]}}1(D[\iota_{a,[k]}](y)=x, D[\iota_{b,[k]}](y)=x')\bP[X_{[k]}=y]\\
			&= \lim_{n\rightarrow\infty}\sum_{y\in D_{[k]}}1(D[\iota_{a,[k]}](y)=x, D[\iota_{b,[k]}](y)=x')\bP\Big[D[T_{k,m_n}](x_n) = y\Big]\\
			&= \lim_{n\rightarrow\infty}\bP\Big[D[T_{k,m_n}\circ \iota_{a,[k]}](x_n)=x, D[T_{k,m_n}\circ \iota_{b,[k]}](x_n)=x'\Big].
			\end{align*}
			The argument that the latter equals $\bP[X_a=x]\cdot \bP[X_b=x']$ is the same as in the proof of (iii)$\Rightarrow$(ii) from Theorem~\ref{thm:independence}.
		\end{proof}
	
		\begin{proof}[Proof of Proposition~\ref{prop:depth-restriction}]
			(i)~This is straightforward to check.\\
			
			For both (i) and (ii) some preparing observations. It is easy to check that $\tilde D$ defined by 
			$$\tilde D_a = \prod_{a'\in\binom{a}{\leq k}}D_{a'}$$
			and for $\tau:b\rightarrow a$ and $\tilde x = (x_{a'})_{a'\in\binom{a}{\leq k}} \in \tilde D_a$ 
			$$\tilde D[\tau](\tilde x) = \Big(D\big[\widetilde{\tau\circ\iota_{b',b}}\big](x_{\tau(b')})\Big)_{b'\in\binom{b}{\leq k}}$$
			defines a new Borel data structure. Again, it is straightforward to check that 
			$$\phi:D\rightarrow \tilde D~~~\phi_a(x) = \big(D[\iota_{a',a}](x)\big)_{a'\in\binom{a}{\leq k}}$$
			is a natural transformation such that every component $\phi_a$ is injective due to $\depth(D)=k$, that is $\phi:D\rightarrow\tilde D$ is an embedding.\\
			
			(ii) By Proposition~\ref{prop:embedding-structure} it is $\Dk = \phi D\subseteq \tilde D$ a Borel data structure naturally isomorphic to $D$. Let $\hat\phi:D\rightarrow \Dk$ be the natural isomorphism obtained from $\phi$ by restricting the range of its components and let $\hat\phi^{-1}:\Dk\rightarrow D$ be the natural inverse of $\hat\phi$.\\
			For every $a'\in\binom{a}{\leq k}$ it is $2^{a'} = \binom{a'}{\leq k}$ and hence 
			$$u\in \Rk_a \Longrightarrow u\circ\iota_{2^{a'},\binom{a}{\leq k}}\in R_{a'}.$$
			For every finite set $a$ and $u\in\Rk_a$ this allows to define
			\begin{equation}\label{eq:bareta}
			\bar\eta_a(u) = \big(\eta_{a'}\big(u\circ \iota_{2^{a'},\binom{a}{\leq k}}\big)\big)_{a'\in\binom{a}{\leq k}},
			\end{equation}
			Check that $\bar\eta_a(u)\in \Dk_a\subseteq \tilde D_a$: for every $u\in \Rk_a$ one can choose $v\in R_a$ with 
			$$u = r_a(v) = v\circ\iota_{\binom{a}{\leq k}, 2^a}.$$
			Note that for every $a'\in\binom{a}{\leq k}$ it holds that $2^{a'}\subseteq \binom{a}{\leq k} \subseteq 2^a$ and hence
			$$v\circ \iota_{2^{a'},2^a} = v\circ \iota_{\binom{a}{\leq k}, 2^a}\circ \iota_{2^{a'},\binom{a}{\leq k}} = u\circ \iota_{2^{a'},\binom{a}{\leq k}}.$$
			Applying naturality of $\eta$ gives
			\begin{align*}
			\hat\phi_a\circ\eta_a(v) &= (D[\iota_{a',a}]\circ\eta_a(v))_{a'\in\binom{a}{\leq k}}\\
			&= (\eta_{a'}\circ R[\iota_{a',a}](v))_{a'\in\binom{a}{\leq k}}\\
			&= (\eta_{a'}\big(v\circ \iota_{2^{a'},2^a}\big))_{a'\in\binom{a}{\leq k}}\\
			&= (\eta_{a'}\big(u\circ \iota_{2^{a'},\binom{a}{\leq k}}\big))_{a'\in\binom{a}{\leq k}}\\
			&= \bar\eta_a(u)\\
			&= \bar\eta_a\circ r_a(v).
			\end{align*}
			It is $\hat\phi_a\circ\eta_a(v)\in \Dk_a$ and hence $\bar\eta_a(u) = \hat\phi_a\circ\eta_a(v)\in \Dk_a$. That is, $\hat\phi$ is a measurable rule $\Rk\rightarrow\Dk$ and the previous calculation also showed that
			$$\hat\phi\circ\eta = \bar\eta\circ r.$$
			Applying $\hat\phi^{-1}$ to the left gives $\eta = \hat\phi^{-1}\circ\bar\eta\circ r$, so the candidate for $\tilde\eta$ is the rule $\tilde\eta = \hat\phi^{-1}\circ\bar\eta:\Rk\rightarrow D$.  All left to check is that this $\tilde\eta=\hat\phi^{-1}\circ\bar\eta$ is a natural transformation. Since $\hat\phi^{-1}$ is it suffices to show that $\bar\eta:\Rk\rightarrow \Dk$ is. Let $u\in \Rk_a$ and choose $v\in R_a$ with $u = r_a(v)$. Let $\tau:b\rightarrow a$ be injective.
			\begin{align*}
			\bar\eta_b\circ \Rk[\tau](u) &= \bar\eta_b\circ \Rk[\tau]\circ r_a(v) \\
			&= \bar\eta_b\circ r_b\circ R[\tau](v) \\
			&= \hat\phi_b\circ\eta_b\circ R[\tau](v) \\
			&= \Dk[\tau]\circ\hat\phi_a\circ\eta_a(v) \\
			&= \Dk[\tau]\circ\bar\eta_a\circ r_a(v) \\
			&= \Dk[\tau]\circ\bar\eta_a(u),
			\end{align*}
			that is $\bar\eta_b\circ \Rk[\tau] = \Dk[\tau]\circ\bar\eta_a$ as needed.\\
			
			(iii) The idea is the same as for (ii), but the technical details are a little more subtle. As before, let $\phi:D\rightarrow\tilde D$ be the embedding and let $\theta:\tilde D\rightarrow D$ be a left-inverse that is a $\mu\circ\phi^{-1}$-a.s. natural transformation for every $\mu\in\SYM(D)$, which exists due to Proposition~\ref{prop:inverse}. In particular, it holds that $\theta\circ\phi = \id_D$ and hence $\mu = \mu\circ(\theta\circ\phi)^{-1} = \mu\circ\phi^{-1}\circ\theta^{-1}$ for every $\mu\in\SYM(D)$.\\
			For every $u\in \Rk_a$ define the value $\bar\eta_a(u)\in \tilde D_a$ as in (\ref{eq:bareta}), which gives a rule $\bar\eta:\Rk\rightarrow\tilde D$.\\
			Let $V_a\sim\ur_a$ and define $U_a = r_a(V_a)$, so $U_a\sim\urk_a$. The $\ur$-a.s. naturality of $\eta$ gives 
			\begin{align*}
			\phi_a\circ\eta_a(V_a) &= (D[\iota_{a',a}]\circ\eta_a(V_a))_{a'\in\binom{a}{\leq k}}\\
			&\as (\eta_{a'}\circ R[\iota_{a',a}](V_a))_{a'\in\binom{a}{\leq k}}\\
			&= (\eta_{a'}\big(V_a\circ \iota_{2^{a'},2^a}\big))_{a'\in\binom{a}{\leq k}}\\
			&= (\eta_{a'}\big(V_a\circ \iota_{2^{a'},\binom{a}{\leq k}}\big))_{a'\in\binom{a}{\leq k}}\\
			&= \bar\eta_a(U_a)\\
			&= \bar\eta_a\circ r_a(V_a),
			\end{align*}
			that is the $\ur$-a.s. equality of the rules $\phi\circ\eta$ and $\bar\eta\circ r$. Applying $\theta$ on the left gives $\eta = \theta\circ\bar\eta\circ r$ $\ur$-almost surely. The desired candidate for $\tilde\eta:\Rk\rightarrow D$ is thus $\tilde\eta = \theta\circ\bar\eta$ and all left to check is that this is a $\urk$-a.s. natural transformation.\\
			First check that $\bar\eta:\Rk\rightarrow\tilde D$ is a $\urk$-a.s. natural transformation. Let $U_a = r_a(V_a)$ with $V_a\sim\ur_a$ and $\tau:b\rightarrow a$ be injective.
			\begin{align*}
			\bar\eta_b\circ \Rk[\tau](U_a) &= \bar\eta_b\circ \Rk[\tau]\circ r_a(V_a) \\
			&= \bar\eta_b\circ r_b\circ R[\tau](V_a) \\
			&\as \phi_b\circ\eta_b\circ R[\tau](V_a) \\
			&\as \phi_b\circ D[\tau]\circ\eta_a(V_a) \\
			&= \tilde D[\tau]\circ\phi_a\circ\eta_a(V_a) \\
			&\as \tilde D[\tau]\circ\bar\eta_a\circ r_a(V_a) \\
			&= \tilde D[\tau]\circ\bar\eta_a(U_a).
			\end{align*}
			So $\bar\eta:\Rk\rightarrow\tilde D$ is a $\urk$-a.s. natural transformation.\\
			Check that $\tilde\eta = \theta\circ\bar\eta:\Rk\rightarrow D$ is a $\urk$-a.s. natural transformation: it is $\theta$ a $\mu\circ\phi^{-1}$-a.s. natural transformation for every $\mu\in\SYM(D)$. Let $\mu = \ur\circ\eta^{-1}$. Because $\phi\circ\eta = \bar\eta\circ r$ $\ur$-almost surely and $\urk = \ur\circ r^{-1}$ it holds that
			$$\mu\circ\phi^{-1} = \ur\circ\eta^{-1}\circ\phi^{-1} = \ur\circ(\phi\circ\eta)^{-1} = \ur\circ (\bar\eta\circ r)^{-1} = \urk\circ\bar\eta^{-1}.$$
			Hence $\theta$ is $\urk\circ\bar\eta^{-1}$-a.s. natural transformation and $\bar\eta$ is a $\urk$-a.s. natural transformation. Lemma~\ref{lemma:chaining} gives that $\tilde\eta = \theta\circ\bar\eta$ is a $\urk$-a.s. natural transformation. 
		\end{proof}
	
		\begin{proof}[Proof of Lemma~\ref{lemma:ind}]
			(1)~For the moment write $\dom_b(\bbi) = \bigcap_{b'\subseteq b, \bbi\in I_{b'}}b'$. Let $c$ be another set with $\bbi\in I_c$. Check that $\dom_b(\bbi)=\dom_c(\bbi)$: since $\bbi\in I_b$ and $\bbi\in I_c$ it is $\bbi\in I_b\cap I_c = I_{b\cap c}$. Because $b\cap c\subseteq b$ and $b\cap c\subseteq c$ 
			$$\dom_b(\bbi) = \cap_{b'\subseteq b, \bbi\in I_{b'}} b' =  \cap_{d'\subseteq b\cap c, \bbi\in I_{d'}} d' = \cap_{c'\subseteq c, \bbi\in I_{c'}} c' = \dom_c(\bbi).$$
			(2)~Write $\tau = \iota_{\tau(b),a}\circ\hat\tau$ so that $I[\tau](\bbi) = I[\iota_{\tau(b),a}]\circ I[\hat\tau](\bbi)$. Because $I[\iota_{\tau(b),a}] = \iota_{I_{\tau(b)}, I_a}$ it is 
			$$I[\tau](\bbi) = \iota_{I_{\tau(b)}, I_a}(I[\hat\tau](\bbi)),$$
			that is $I[\hat\tau](\bbi) \in I_{\tau(b)} \subseteq I_a$ equals $I[\tau](\bbi)\in I_a$.\\
			(3)~Let $\bbi'=I[\tau](\bbi)$ and $c=\dom(\bbi)$ and $c'=\dom(\bbi')$. Check $c'=\tau(c)$: since $\bbi\in I_c$ it holds that $\bbi = \iota_{I_c,I_b}(\bbi)$, hence $I[\tau](\bbi) = I[\tau\circ\iota_{c,b}](\bbi)$ and so with $\pi = \widehat{\tau\circ\iota_{c,b}}$ by (2) $\bbi' = I[\pi](\bbi)$, which is element of $I_{\pi(c)} = I_{\tau(c)}$, thus $c'\subseteq \tau(c)$. Applying the inverse $\pi^{-1}$ to the equation $\bbi'=I[\pi](\bbi)$ gives $\bbi = I[\pi^{-1}](\bbi')$ and the same reasoning as before yields $c\subseteq \pi^{-1}(c')$. Because $\pi^{-1}$ is bijective it holds that $|c|\leq |c'|$. From $c'\subseteq \tau(c)$ and injectivity of $\tau$ it follows $|c'|\leq |c| = |\tau(c)|$ and hence $|c'|=|\tau(c)|$. Because both $c', \tau(c)$ are finite $c'\subseteq\tau(c)$ together with $|c'|=|\tau(c)|$ implies $c'=\tau(c)$.\\
			(4) Assume there is $\pi\in\stab(\bbi)$ with $\tau\circ \pi(i) = \sigma(i)$ for all $i\in\dom(\bbi)$. This implies $\widehat{\tau\circ\pi} = \hat \sigma$ and with (2) it follows 
			$$I[\sigma](\bbi) = I[\hat\sigma](\bbi) = I[\widehat{\tau\circ\pi}](\bbi) = I[\tau\circ\pi](\bbi) = I[\tau](I[\pi](\bbi)) = I[\tau](\bbi).$$
			Now assume $I[\sigma](\bbi) = I[\tau](\bbi)$. By (3) it is $\sigma(\dom(\bbi)) = \tau(\dom(\bbi))$, hence both $\hat\sigma, \hat\tau$ are bijections $\dom(\bbi) \rightarrow \tau(\dom(\bbi))$. By (2) it holds $I[\hat\tau](\bbi) = I[\tau](\bbi) = I[\sigma](\bbi) = I[\hat\sigma](\bbi)$. Applying $\hat\tau^{-1}$ on the left gives $\bbi = I[\hat\tau^{-1}\circ \hat\sigma](\bbi)$. That is, the bijection $\pi:=\hat\tau^{-1}\circ \hat\sigma$ is element of $\stab(\bbi)$. For $i\in \dom(\bbi)$ it is $\sigma(i) = \hat\sigma(i) \in \tau(\dom(\bbi))$ and hence $\tau\circ \pi(i) = \tau\circ\hat\tau^{-1}(\hat\sigma(i)) = \sigma(i)$.\\
			(5) Reflexivity: it is $I[\id_a](\bbi) = \bbi$ hence $\bbi\sim\bbi$. Symmetry: let $i'\in I_b$ with $\bbi\sim\bbi'$ witnessed by $\tau:b\rightarrow a$ satisfying $I[\tau](\bbi)=\bbi'$. By (2) it is $I[\tau](\bbi) = I[\hat\tau](\bbi) = \bbi'$. Applying $I[\hat\tau^{-1}]$ gives $\bbi = I[\hat\tau^{-1}](\bbi')$ and hence $\bbi'\sim\bbi$. Transitivity follows by composing the witnessing injections.
		\end{proof}

\printbibliography

@phdthesis{gerstenberg2018austauschbarkeit,
	title={Austauschbarkeit in Diskreten Strukturen: Simplizes und Filtrationen},
	author={Gerstenberg, Julian},
	year={2018},
	school={Hannover: Gottfried Wilhelm Leibniz Universit{\"a}t Hannover},
	keywords = {unpublished, own}
}

@article{gerstenberg2020general,
	title={General erased-word processes: Product-type filtrations, ergodic laws and Martin boundaries},
	author={Gerstenberg, Julian},
	journal={Stochastic Processes and their Applications},
	volume={130},
	number={6},
	pages={3540--3573},
	year={2020},
	publisher={Elsevier},
	keywords = {published, own}
}

@article{gerstenberg2020exchangeable,
	title={Exchangeable interval hypergraphs and limits of ordered discrete structures},
	author={Gerstenberg, Julian},
	journal={The Annals of Probability},
	volume={48},
	number={3},
	pages={1128--1167},
	year={2020},
	publisher={Institute of Mathematical Statistics},
	keywords = {published, own}
}

@article{gerstenberg2016boundary,
	title={A boundary theory approach to de Finetti's theorem},
	author={Gerstenberg, Julian and Gr{\"u}bel, Rudolf and Hagemann, Klaas},
	journal={arXiv preprint arXiv:1610.02561},
	year={2016},
	keywords = {unpublished, own}
}

@incollection{aldous1985exchangeability,
	title={Exchangeability and related topics},
	author={Aldous, David J},
	booktitle={{\'E}cole d'{\'E}t{\'e} de Probabilit{\'e}s de Saint-Flour XIII—1983},
	pages={1--198},
	year={1985},
	publisher={Springer}
}

@article{choi2017doob,
	title={Doob--Martin compactification of a Markov chain for growing random words sequentially},
	author={Choi, Hye Soo and Evans, Steven N},
	journal={Stochastic processes and their applications},
	volume={127},
	number={7},
	pages={2428--2445},
	year={2017},
	publisher={Elsevier}
}

@inproceedings{heunen2017convenient,
	title={A convenient category for higher-order probability theory},
	author={Heunen, Chris and Kammar, Ohad and Staton, Sam and Yang, Hongseok},
	booktitle={2017 32nd Annual ACM/IEEE Symposium on Logic in Computer Science (LICS)},
	pages={1--12},
	year={2017},
	organization={IEEE}
}

@article{evans2017doob,
	title={Doob--Martin boundary of R{\'e}my’s tree growth chain},
	author={Evans, Steven N and Gr{\"u}bel, Rudolf and Wakolbinger, Anton},
	journal={The Annals of Probability},
	volume={45},
	number={1},
	pages={225--277},
	year={2017},
	publisher={Institute of Mathematical Statistics}
}

@article{grubel2015persisting,
	title={Persisting randomness in randomly growing discrete structures: graphs and search trees},
	author={Gr{\"u}bel, Rudolf},
	journal={Discrete Mathematics \& Theoretical Computer Science},
	volume={18},
	year={2015},
	publisher={Episciences. org}
}

@book{glasner2003ergodic,
	title={Ergodic theory via joinings},
	author={Glasner, Eli},
	number={Mathematical surveys and monogrphas, no. 101},
	year={2003},
	publisher={American Mathematical Soc.}
}

@article{diaconis2007graph,
	title={Graph limits and exchangeable random graphs},
	author={Diaconis, Persi and Janson, Svante},
	journal={arXiv preprint arXiv:0712.2749},
	year={2007}
}

@article{orbanz2014bayesian,
	title={Bayesian models of graphs, arrays and other exchangeable random structures},
	author={Orbanz, Peter and Roy, Daniel M},
	journal={IEEE transactions on pattern analysis and machine intelligence},
	volume={37},
	number={2},
	pages={437--461},
	year={2014},
	publisher={IEEE}
}

@inproceedings{aldous2010exchangeability,
	title={Exchangeability and continuum limits of discrete random structures},
	author={Aldous, David J},
	booktitle={Proceedings of the International Congress of Mathematicians 2010 (ICM 2010) (In 4 Volumes) Vol. I: Plenary Lectures and Ceremonies Vols. II--IV: Invited Lectures},
	pages={141--153},
	year={2010},
	organization={World Scientific}
}

@article{austin2008exchangeable,
	title={On exchangeable random variables and the statistics of large graphs and hypergraphs},
	author={Austin, Tim},
	journal={Probability Surveys},
	volume={5},
	pages={80--145},
	year={2008},
	publisher={The Institute of Mathematical Statistics and the Bernoulli Society},
	keywords = {exchangeability}
}

@article{austin2010testability,
	title={Testability and repair of hereditary hypergraph properties},
	author={Austin, Tim and Tao, Terence},
	journal={Random Structures \& Algorithms},
	volume={36},
	number={4},
	pages={373--463},
	year={2010},
	publisher={Wiley Online Library},
	keywords = {exchangeability, category}
}

@article{forman2018representation,
	title={A representation of exchangeable hierarchies by sampling from random real trees},
	author={Forman, Noah and Haulk, Chris and Pitman, Jim},
	journal={Probability Theory and Related Fields},
	volume={172},
	number={1},
	pages={1--29},
	year={2018},
	publisher={Springer}
}

@inproceedings{austin2012exchangeable,
	title={Exchangeable random arrays},
	author={Austin, Tim},
	booktitle={Notes for IAS workshop},
	year={2012}, 
	keywords = {exchangeability}
}

@article{hoover1979relations,
	title={Relations on probability spaces and arrays of random variables},
	author={Hoover, D.N.},
	journal={Preprint, Institute for Advanced Study, Princeton},
	year={1979}
}

@article{austin2014hierarchical,
	title={A hierarchical version of the de Finetti and Aldous-Hoover representations},
	author={Austin, Tim and Panchenko, Dmitry},
	journal={Probability Theory and Related Fields},
	volume={159},
	number={3},
	pages={809--823},
	year={2014},
	publisher={Springer}
}

@article{jung2021generalization,
	title={A generalization of hierarchical exchangeability on trees to directed acyclic graphs},
	author={Jung, Paul and Lee, Jiho and Staton, Sam and Yang, Hongseok},
	journal={Annales Henri Lebesgue},
	volume={4},
	pages={325--368},
	year={2021}
}

@article{cai2016priors,
	title={Priors on exchangeable directed graphs},
	author={Cai, Diana and Ackerman, Nathanael and Freer, Cameron},
	journal={Electronic Journal of Statistics},
	volume={10},
	number={2},
	pages={3490--3515},
	year={2016},
	publisher={Institute of Mathematical Statistics and Bernoulli Society}
}

@article{alea2022finetti,
	title={A de Finetti-type representation of joint hierarchically exchange-able arrays on DAGs},
	author={Lee, Jiho},
	journal={ALEA, Lat. Am. J. Probab. Math. Stat.},
	volume={19},
	pages={925--942},
	year={2022}
}

@inproceedings{austin2015exchangeable,
	title={Exchangeable random measures},
	author={Austin, Tim},
	booktitle={Annales de l'IHP Probabilit{\'e}s et statistiques},
	volume={51},
	number={3},
	pages={842--861},
	year={2015},
	keywords = {exchangeability}
}

@article{aldous2009more,
	title={More uses of exchangeability: representations of complex random structures},
	author={Aldous, David J},
	journal={arXiv preprint arXiv:0909.4339},
	year={2009},
	keywords = {exchangeability}
}

@article{janson2011poset,
	title={Poset limits and exchangeable random posets},
	author={Janson, Svante},
	journal={Combinatorica},
	volume={31},
	number={5},
	pages={529--563},
	year={2011},
	publisher={Springer},
	keywords = {exchangeability}
}

@book{kechris2012classical,
	title={Classical descriptive set theory},
	author={Kechris, Alexander S.},
	year={1995},
	publisher={Springer New York}
}

@article{felgner1971comparison,
	title={Comparison of the axioms of local and universal choice},
	author={Felgner, Urlich},
	journal={Fundamenta mathematicae},
	volume={71},
	pages={43--62},
	year={1971},
	publisher={Instytut Matematyczny Polskiej Akademii Nauk}
}

@article{mccullagh2002statistical,
	title={What is a statistical model?},
	author={McCullagh, Peter},
	journal={The Annals of Statistics},
	volume={30},
	number={5},
	pages={1225--1310},
	year={2002},
	publisher={Institute of Mathematical Statistics},
	keywords = {category, statistics}
}

@book{korolyuk2013theory,
	title={Theory of U-statistics},
	author={Korolyuk, Vladimir S and Borovskich, Yu V},
	volume={MAIA, volume 273},
	year={1994},
	publisher={Springer Dordrecht}
}

@article{fritz2021journal,
	title={De Finetti’s Theorem in Categorical Probability},
	author={Fritz, Tobias and Gonda, Tom{\'a}{\v{s}} and Perrone, Paolo},
	journal={Journal of Stochastic Analysis},
	volume={2},
	number={4.6},
	year={2021}
}

@inproceedings{jacobs2020finetti,
	title={De Finetti’s construction as a categorical limit},
	author={Jacobs, Bart and Staton, Sam},
	booktitle={International Workshop on Coalgebraic Methods in Computer Science},
	pages={90--111},
	year={2020},
	organization={Springer}
}

@article{staton2022quantum,
	title={Quantum de Finetti Theorems as Categorical Limits, and Limits of State Spaces of C*-algebras},
	author={Staton, Sam and Summers, Ned},
	journal={arXiv preprint arXiv:2207.05832},
	year={2022}
}

@article{kechris2000descriptive,
	title={Descriptive dynamics},
	author={Kechris, Alexander S.},
	journal={London Math. Soc. Lecture Note Series},
	volume={277},
	pages={231--258},
	year={2000}
}

@article{aldous1982exchangeability,
	title={On exchangeability and conditional independence},
	author={Aldous, David J},
	journal={Exchangeability in probability and statistics (Rome, 1981)},
	pages={165--170},
	year={1982},
	publisher={North-Holland Amsterdam}
}

@book{milewski2019category,
	title={Category theory for programmers},
	author={Milewski, Bartosz},
	year={2019},
	publisher={Bartosz Milewski}
}

@book{lauritzen1988extremal,
	title={Extremal families and systems of sufficient statistics},
	author={Lauritzen, Steffen L.},
	volume={49},
	year={1988},
	publisher={Lecutre Notes in Statistics. Springer-Verlag Berlin Heidelberg GmbH}
}

@book{mac2013categories,
	title={Categories for the working mathematician},
	author={Mac Lane, Saunders},
	year={1978},
	publisher={Springer New York},
	keywords = {category}
}

@incollection{giry1982categorical,
	title={A categorical approach to probability theory},
	author={Giry, Michele},
	booktitle={Categorical aspects of topology and analysis},
	pages={68--85},
	year={1982},
	publisher={Springer}
}

@book{bergeron1998combinatorial,
	title={Combinatorial species and tree-like structures},
	author={Bergeron, Fran{\c{c}}ois and Bergeron, F and Labelle, Gilbert and Leroux, Pierre},
	number={67},
	year={1998},
	publisher={Cambridge University Press}
}

@inproceedings{lloyd2013exchangeable,
	title={Exchangeable databases and their functional representation},
	author={Lloyd, James Robert and Orbanz, Peter and Ghahramani, Zoubin and Roy, Daniel M},
	booktitle={NIPS Workshop on Frontiers of Network Analysis: Methods, Models, and Application},
	year={2013}
}

@phdthesis{austern2019limit,
	title={Limit Theorems Beyond Sums of IID Observations},
	author={Austern, Morgane},
	year={2019},
	publisher={Columbia University}
}

@book{kallenberg2006probabilistic,
	title={Probabilistic symmetries and invariance principles},
	author={Kallenberg, Olav},
	year={2006},
	publisher={Springer New York}
}

@book{kallenberg1997foundations,
	title={Foundations of modern probability},
	author={Kallenberg, Olav},
	year={1997},
	publisher={Springer New York}
}

@book{kallenberg2017random,
	title={Random measures, theory and applications},
	author={Kallenberg, Olav},
	year={2017},
	publisher={Springer Cham}
}

@article{lindenstrauss2001pointwise,
	title={Pointwise theorems for amenable groups},
	author={Lindenstrauss, Elon},
	journal={Inventiones mathematicae},
	volume={146},
	number={2},
	pages={259--295},
	year={2001},
	publisher={Springer}
}

@article{gnedin1997representation,
	title={The representation of composition structures},
	author={Gnedin, Alexander V},
	journal={The Annals of Probability},
	pages={1437--1450},
	year={1997},
	publisher={JSTOR}
}

@article{austern2018limit,
	title={Limit theorems for distributions invariant under groups of transformations},
	author={Austern, Morgane and Orbanz, Peter},
	journal={Annals of Statistics (to appear)},
	year={2018},
	url={https://www.e-publications.org/ims/submission/AOS/user/submissionFile/51328?confirm=c0a05f9c}
}

\end{document}